\theoremstyle{plain}
\newtheorem{thrm}{Theorem}[section]
\newtheorem*{thrm*}{Theorem}
\newtheorem{lemma}[thrm]{Lemma}
\newtheorem{prop}[thrm]{Proposition}
\newtheorem{cor}[thrm]{Corollary}
\theoremstyle{definition}
\newtheorem{dfn}[thrm]{Definition}
\theoremstyle{remark}
\newtheorem{rmrk}[thrm]{Remark}
\theoremstyle{example}
\newtheorem{ex}[thrm]{Example}
\numberwithin{equation}{section}
\begin{document}

\newcommand{\tx}{\tilde x}
\newcommand{\R}{\mathbb R}
\newcommand{\N}{\mathbb N}
\newcommand{\C}{\mathbb C}
\newcommand{\lie}{\mathcal G}
\newcommand{\hN}{\mathcal N}
\newcommand{\D}{\mathcal D}
\newcommand{\M}{\mathcal M}
\newcommand{\CC}{Carnot-Carath\'eodory }

\newcommand{\A}{\mathcal A}
\newcommand{\B}{\mathcal B}
\newcommand{\sL}{\mathcal L}
\newcommand{\sLi}{\mathcal L_{\infty}}
\newcommand{\supp}{\operatorname{supp}}

\newcommand{\G}{\Gamma}
\newcommand{\x}{\xi}

\newcommand{\eps}{\epsilon}
\newcommand{\al}{\alpha}
\newcommand{\be}{\beta}
\newcommand{\p}{\partial}  
\newcommand{\lig}{\mathfrak}

\def\dist{\mathop{\varrho}\nolimits}

\newcommand{\BCH}{\operatorname{BCH}\nolimits}
\newcommand{\Lip}{\operatorname{Lip}\nolimits}
\newcommand{\Hol}{C}                             
\newcommand{\lip}{\operatorname{lip}\nolimits}
\newcommand{\capQ}{\operatorname{Cap}\nolimits_Q}
\newcommand{\pCap}{\operatorname{Cap}\nolimits_p}
\newcommand{\Om}{\Omega}
\newcommand{\om}{\omega}
\newcommand{\half}{\frac{1}{2}}
\newcommand{\e}{\epsilon}
\newcommand{\vn}{\vec{n}}
\newcommand{\X}{\Xi}
\newcommand{\tLip}{\tilde  Lip}

\newcommand{\Span}{\operatorname{span}}

\newcommand{\ad}{\operatorname{ad}}
\newcommand{\Hm}{\mathbb H^m}
\newcommand{\Hn}{\mathbb H^n}
\newcommand{\Hone}{\mathbb H^1}
\newcommand{\Lie}{\mathfrak}
\newcommand{\bg}{\Lie g}
\newcommand{\bG}{G}
\newcommand{\degree}{\text{deg}}
\newcommand{\Mn}{M^n}
\newcommand{\Rn}{\R^n}

\newcommand{\Layer}{V}
\newcommand{\hgrad}{\nabla_{\!H}}
\newcommand{\im}{\textbf{i}}
\newcommand{\nz}{\nabla_0}
\newcommand{\s}{\sigma}
\newcommand{\se}{\sigma_\e}

\newcommand{\ued}{u^{\e,\delta}}
\newcommand{\ueds}{u^{\e,\delta,\sigma}}
\newcommand{\tnabla}{\tilde{\nabla}}

\newcommand{\bx}{\bar x}
\newcommand{\by}{\bar y}
\newcommand{\bt}{\bar t}
\newcommand{\bs}{\bar s}
\newcommand{\bz}{\bar z}
\newcommand{\btau}{\bar \tau}
\newcommand{\bY}{\bar Y^{\e}}
\newcommand{\bd}{\bar{d}}

\newcommand{\LC}{\mbox{\boldmath $\nabla$}}
\newcommand{\Ne}{\mbox{\boldmath $n^\e$}}
\newcommand{\nuo}{\mbox{\boldmath $n^0$}}
\newcommand{\nuu}{\mbox{\boldmath $n^1$}}
\newcommand{\nue}{\mbox{\boldmath $n^\e$}}
\newcommand{\nuek}{\mbox{\boldmath $n^{\e_k}$}}
\newcommand{\dse}{\nabla^{H\Su, \e}}
\newcommand{\dso}{\nabla^{H\Su, 0}}
\newcommand{\tX}{\tilde X}

\newcommand{\Xie}{X^\epsilon_i}
\newcommand{\Xje}{X^\epsilon_j}
\newcommand{\Su}{\mathcal S}
\newcommand{\F}{\mathcal F}
\def\data{\X,\allowbreak C_D,\allowbreak C_P,\allowbreak \A_0,\allowbreak \A_1,\allowbreak p}

\title[Regularity for subelliptic PDE]{Regularity for subelliptic PDE \\through uniform estimates in multi-scale geometries}

\author[Capogna]{Luca Capogna}
\thanks{L. C. is partially supported by NSF award  DMS 1101478}
\address{Luca Capogna\\Department of Mathematical Sciences, Worcester Polytechnic Institute\\Worcester, MA 01609
}
\email{lcapogna@wpi.edu}

\author[Citti]{Giovanna Citti}
\address{Giovanna Citti\\ Dipartimento di Matematica, Universita di Bologna\\
Bologna, Italy}
\email{citti@unibo.it}

\begin{abstract} We aim at  reviewing and extending a number of recent results addressing stability of certain geometric and analytic estimates in the Riemannian approximation of subRiemannian structures. In particular we extend the recent work of the the authors with Rea \cite{CCR} and Manfredini \cite{MR3108875} concerning stability of doubling properties, Poincare' inequalities, Gaussian estimates on heat kernels and Schauder estimates from the Carnot group setting to the general case of H\"ormander vector fields. \end{abstract}

\maketitle

\tableofcontents
\setcounter{tocdepth}{3}
\section{Introduction}

A subRiemannian manifold as a triplet  $(M,\Delta, g_0)$ where $M$ is a connected, smooth manifold of dimension $n\in \N$, $\Delta$ denotes a  subbundle of $TM$  bracket-generating $TM$, and $g_0$ is a positive definite smooth, bilinear form  on $\Delta$,
see for instance \cite{Montgomery}.
Similarly to  the Riemannian setting, one  endows $(M,\Delta, g_0)$ with a metric space structure by defining the {\it Carnot-Caratheodory}  (CC) control distance: For any pair $x,y\in M$ set  \begin{multline} d_0(x,y)=\inf\{\delta>0 \text{ such that there exists a curve } \gamma\in C^\infty([0,1]; M) \text{ with endpoints }x,y \\\text{ such that }\dot\gamma\in \Delta (\gamma) \text{ and }|\dot\gamma|_g\le \delta\}.\notag\end{multline}
Curves whose velocity vector lies in $\Delta$ are called {\it horizontal}, their length  is defined in an obvious way. 
Subriemannian metrics
can be defined, by prescribing a smooth distributions of  vector fields $X=(X_1,\ldots ,X_m)$ in $R^n$, orthonormal with respect to $g_0$, and
 satisfying the H\"ormander finite rank condition
\begin{equation}    \label{Hor} rank \,\, Lie
(X_1,\ldots ,X_m)(x)=n, \quad \forall x\in \Omega.\end{equation}

\bigskip

When attempting to extend known Riemannian results to the subRiemannian setting one naturally is led to approximating the 
sub-Riemannian metric (and the associated  distance function $d_0(\cdot, \cdot) $) with
a one-parameter family of degenerating Riemannian metric (associated to distance functions $d_\e(\cdot, \cdot)$), which converge in the Gromov-Hausdorff sense as $\e\to 0$ to the original one.
This approximation is described in detail in from the point of view of the distance functions  in Section \ref{DEPS} and from the point of view of the 
Riemannian setting in Definition \ref{metrica-epsilon}.   The approximating distance functions $d_\e$  can be defined in  terms of an extended  generating frame of smooth vector fields
$X_1^\e,...,X_p^\e$, with $p\ge n$ and $X_i^\e=X_i$ for $i=1,...,m$,  that converges/collapses uniformly on compact sets to the original family$X_1,...,X_m$ as $\e\to 0$. This frame includes all the higher order commutators needed to bracket generate the tangent space. When coupled with uniform estimates, this method provides a strategy to extend known Riemannian results to the subRiemannian setting. Such approximations have been widely used since the mid-80's in a variety of contexts. As example we recall the work of  Debiard \cite{de:1981}, Koranyi \cite{kor:1983,kor:heisenberg}, Ge \cite{Ge}, Rumin \cite{MR1771424} as well as the references in \cite{Montgomery:book} and \cite{monti-tesi}.   More recently this technique has been used in the study of minimal surfaces and mean curvature flow in the Heisenberg group Starting from the existence theorem of Pauls \cite{Pauls:minimal},  and Cheng, Hwang and Yang \cite{chy}, to the regularity results by Manfredini and the authors \cite{ccm1}, \cite{CCM2}.
 Our work is largely inspired to the results  of Manfredini and one of us  \cite{cittimanfredini:uniform} where the  Nagel, Stein and Wainger estimates for the fundamental solution of subLaplacians  have been extended to the Riemannian approximants uniformly as $\e\to 0$.  In the following we list in more detail the nature  of the  stability estimates we investigate. Given a Riemannian manifold $(M^n,g)$, with a Riemannian smooth volume form  expressed in local coordinates $(x_1,...,x_n)$ as $d\ vol= \sqrt{g} dx_1...dx_n$, one can consider the corresponding heat operator acting on functions $u:M\to \R$, 
$$L_gu = \p_t u - \frac{1}{\sqrt{g}} \sum_{i,j=1}^n \p_i ( \sqrt{g} g^{ij} \p_j u).$$
The study of such operators is closely related to certain geometric and analytic estimates, namely: For $K\subset \subset M$ and $r_0>0$ there exists positive constants $C_D,C_P,..$ below depending on $K,r_0,g$ such that for all $x\in K$ and $0<r<r_0$, one has
\begin{itemize}
\item (Doubling property) \begin{equation}\label{D} vol (B(x,r)) \ge C_D vol (B(x,2r));\end{equation}
\item (Poincar\'e inequality)  $\int_{B(x,r)} |u-u_{B(x,r)}| dvol \le C_P r \int_{B(x,2r)} |\nabla_g u| d vol$;
\item (Gaussian estimates) If $h_g$ denotes the heat kernel of $L_g$, $x,y\in M$  and $t>0$ one has
\begin{multline}\label{gauss-bound-riem}C_g^{-1}  (vol(B(x,\sqrt{t})))^{-n/2} \exp(A_g \frac{ d(x,y)^2 }{t}) \\ 
\le |h(x,y,t) 
\le C_g (vol(B(x,\sqrt{t})))^{-n/2} \exp(B_g \frac{ d(x,y)^2 }{t})  \end{multline}
and if appropriate curvature conditions hold
\begin{equation}\label{grad-est-riem}|\p_t^s \p_{i_1} ... \p_{i_k}  h(x,y,t,s)\le C_{s,k,g}  t^{-s-\frac{k}{2}} (vol(B(x,t-s)))^{-n/2} \exp(B_G \frac{ d(x,y)^2 }{t-s});\end{equation}
\item (Parabolic  Harnack inequality) If $L_gu=0$ in $Q=M\times (0,T)$ and $u\ge 0$ then 
\begin{equation}\label{harnack-riem}\sup_{B(x,r)\times (t-r^2, t-r^2/2)}   u \le C_g \inf_{B(x,r)\times (t+r^2/2, t+r^2)}   u.\end{equation}

 \end{itemize}
 The connections between such estimates was made evident in the work of Saloff-Coste \cite{SC}  and Grigoryan \cite{grig}, who independently established the equivalence
 
 \medskip
 
 \centerline{ \it (Poincare)+(Doubling) $<=>$ Gaussian estimates \eqref{gauss-bound-riem}  $<=>$ Parabolic Harnack inequality \eqref{harnack-riem}.}
See also related works by Biroli and Mosco \cite{birolimosco}, and Sturm \cite{MR1387522}.

\bigskip

{\it This paper aims at describing the behavior of such estimates along a sequence of metrics $g_\e$, that  collapse to a subRiemannian structure as $\e\to 0$. We will prove that the estimates are stable as $\e\to 0$ and explore some of the consequences of this stability.} Although, thanks to the work of Jerison \cite{jer:poincare}, Nagel, Stein and Wainger \cite{NSW} and Jerison and Sanchez-Calle \cite{MR865430},  the Poincar\`e inequality, the doubling property and the Gaussian bounds are well known for subRiemannian structures, it is not immediate that they continue to hold uniformly in the approximation as $\e\to 0$. For one thing, the Riemannian curvature tensor is unbounded as $\e\to 0$, thus preventing the use of Li-Yau's estimates. Moreover, as $\e\to 0$ the Hausdorff dimension of the metric spaces $(M,d_\e)$, where $d_\e$ denotes the distance function associated to $g_\e$, typical does not remain constant  and in fact increases at $\e=0$ to the homogeneous dimension associated to the subRimannian structure. The term {\it multiscale} from the title reflects the fact that the blow up of the metric as $\e\to 0$ is Riemannian at scales less than $\e$ and subRiemannian at larger scales.

To illustrate our work we introduce a prototype for the class of spaces we investigate,   we consider the manifold $M=\R^2\times S^1$, with coordinates $(x_1,x_2,\theta)$. The horizontal distribution is given by 
$$\Delta=span\{ X_1,X_2\}, \text{ with }X_1=\cos\theta\p_{x_1}+\sin\theta \p_{x_2}, \text{ and } X_2=\p_\theta.$$ The subRiemannian metric $g_0$ is defined so that $X_1$ and $X_2$ form a orthonormal basis. This is the group of Euclidean isometries defined below in example \ref{heisenberg-ex}. For each $\e>0$ we also consider the Riemannian metric $g_\e$ on $M$ uniquely defined by the requirement that $X_1,X_2, \e X_3$ is an orthonormal basis, with $X_3=-\sin\theta \p_{x_1}+\cos\theta \p_{x_2}$. Denote by $d_\e$ the corresponding Riemannian distance, by $X_i^*$ the adjoint of $X_i$ with respect to Lebesgue measure and by $\Gamma_\e$ the fundamental solution of the Laplace-Beltrami operator $L_\e=\sum_{i=1}^3 X_i^* X_i$. Since $L_\e$ is uniformly elliptic, then there exists $C_\e,R_\e>0$ such that for $d_\e(x,y)<R_\e$ the fundamental solution will satisfy $$C_\e^{-1} d_\e(x,y)^{-1} \le \Gamma_\e(x,y) \le C_e d_\e(x,y)^{-1}.$$ As $\e\to 0$ this estimate will degenerate in the following way: $R_\e\to 0$, $C_\e\to \infty$ and for $\e=0$ one will eventually have
$$ \Gamma_0(x,y)\approx d_0(x,y)^{-2}.$$
As a result of the work in \cite{NSW} one has that  for each $\e>0$ there exists $C_\e>0$ such that
$$ C^{-1} \frac{d_\e^2(x,y)}{|B_\e(x,d(x,y))|} \le \Gamma_\e(x,y) \le C \frac{d_\e^2(x,y)}{|B_\e(x,d(x,y))|}.$$  The main result of 
\cite{cittimanfredini:uniform} was  to  provide stable bounds for the fundamental solution by proving that one can choose $C_\e$ independent of $\e$ as $\e\to 0$. In this paper we extend such stable bounds to the degenerate parabolic setting and to the more general subRiemannian setting.

\bigskip

Since our results will be local in nature, unless explicitly stated we will always assume that $M=\R^n$ and use as volume the Lebesgue measure.  The first result we present is due to Rea and the authors \cite{CCR} and concerns stability of the doubling property.

\begin{thrm}\label{Main-1}

 For every  $\e_0>0$, and  $K\subset \subset \R^n$ there exist constants $R,C>0$ depending on $K, \e_0$ and on the subRiemannian structure,  such that for every $\e\in [0,\e_0]$, $x\in K$ and  $0<r<R$, 
 $$|B_\e(x,2r)| \le C |B_\e(x,r)|.$$
 Here we have denoted by $B_\e$ the balls related to the $d_\e$ distance function.
 \end{thrm}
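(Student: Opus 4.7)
\medskip

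\noindent\textbf{Proof plan.} My strategy is to derive the doubling property from a uniform-in-$\e$ version of the Nagel--Stein--Wainger box-ball estimate for the volume of metric balls. To each multi-index $I=(i_1,\ldots,i_n)$ with $i_k\in\{1,\ldots,p\}$ and $d(I)=\sum_k d_{i_k}$ (where $d_i$ is the formal degree assigned to $X_i^\e$, equal to $1$ for the original generators and equal to the commutator length for the added fields), I attach the function
$$
\lambda_I^\e(x)\ =\ \det\bigl(X_{i_1}^\e(x),\ldots,X_{i_n}^\e(x)\bigr).
$$
The goal is to show that there exist $R,C>0$ depending only on $K$, $\e_0$ and the subRiemannian structure, such that for every $\e\in[0,\e_0]$, $x\in K$ and $0<r<R$,
\begin{equation}\label{ball-box}
C^{-1}\sum_{I}|\lambda_I^\e(x)|\,r^{d(I)}\ \le\ |B_\e(x,r)|\ \le\ C\sum_{I}|\lambda_I^\e(x)|\,r^{d(I)}.
\end{equation}
Once \eqref{ball-box} is in place, the doubling property is an immediate algebraic consequence: doubling the radius multiplies each summand on the right by $2^{d(I)}\le 2^Q$, where $Q=\max_I d(I)$ is finite because $X$ bracket-generates at a fixed step, and so $|B_\e(x,2r)|\le C^2 2^Q |B_\e(x,r)|$.

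The bulk of the work is thus to establish \eqref{ball-box} uniformly in $\e$. I would follow the scheme of Nagel--Stein--Wainger and the uniform-in-$\e$ adaptation of Citti--Manfredini in \cite{cittimanfredini:uniform}, but for the general Hörmander frame rather than the Carnot group setting. The first step is to select, at each pair $(x,r)$, a distinguished $n$-tuple of vector fields from $\{X_1^\e,\ldots,X_p^\e\}$ whose weighted determinant $|\lambda_{I_0}^\e(x)|r^{d(I_0)}$ essentially realizes the supremum over $I$. One then considers the approximate exponential map
$$
\Phi^\e_{x,r}(u_1,\ldots,u_n)\ =\ \exp\Bigl(\sum_{k=1}^n u_k r^{d_{i_k}}X_{i_k}^\e\Bigr)(x),
$$
and proves that $\Phi^\e_{x,r}$ is a bi-Lipschitz diffeomorphism from a unit ball in $\R^n$ onto a set comparable to $B_\e(x,r)$, with Jacobian of size $|\lambda_{I_0}^\e(x)|r^{d(I_0)}$, and with comparison constants that depend only on $K$ and $\e_0$.

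The main obstacle, and the place where the argument differs genuinely from \cite{NSW}, lies in the uniform estimates for $\Phi^\e_{x,r}$. Two issues arise: (i) the higher-degree fields $X_i^\e$ are not true commutators of the lower-degree ones but only collapse to such commutators as $\e\to 0$, so the Baker--Campbell--Hausdorff/Gronwall-type bounds controlling the size of products of their exponentials must be written in a way that does not blow up as $\e\to 0$; and (ii) one must verify that whenever some $\lambda_I^\e(x)$ are small relative to the maximizer $\lambda_{I_0}^\e(x)$, the fields $X_{i_k}^\e$ not selected in $I_0$ can still be expressed, up to controllable error, as combinations of the selected ones along the flow — again with constants independent of $\e$. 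This is the content of the ``lifting'' and ``freezing'' arguments in \cite{CCR}; once these two uniform ingredients are secured, the containment of $\Phi^\e_{x,r}(Q_1)$ inside $B_\e(x,Cr)$ and of $B_\e(x,r)$ inside $\Phi^\e_{x,r}(Q_C)$ follows by integrating the horizontal frame, and \eqref{ball-box} drops out by the change of variables formula applied to $\Phi^\e_{x,r}$.
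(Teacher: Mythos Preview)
Your high-level plan is sound and matches the paper's architecture: the doubling property does follow at once from a uniform-in-$\e$ version of the Nagel--Stein--Wainger volume estimate (the paper's Corollary \ref{balmeasure}), and the real content is Proposition \ref{indipendenza}, which shows that the NSW constants $R_\e,C_{1,\e},C_{2,\e}$ in Theorem \ref{MAINNSWeps} may be chosen independent of $\e$. However, your proposal does not supply the mechanism that produces this uniformity, and the mechanism you gesture at --- ``lifting and freezing'' from \cite{CCR} --- is not what is used here. Those techniques appear later in the paper, for the heat-kernel and Schauder estimates; they play no role in the proof of the doubling property.

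The obstacle you flag in (i)--(ii) is genuine: the structure constants $c_{jk}^l$ of the extended frame blow up as $\e\to 0$, so one cannot simply rerun the NSW argument and track dependencies. The paper's proof sidesteps this entirely by a \emph{two-scale comparison to the endpoints} $\e=0$ and $\e=\bar\e=\e_0$. First, one works with the doubled frame $X_1^\e,\ldots,X_{2p-m}^\e$ of Definition \ref{prima-def}, which carries \emph{both} the weighted commutators $\e^{d(i)-1}Y_i$ (formal $\e$-degree $1$) and the unweighted $Y_i$ (formal $\e$-degree $d(i)$); your setup with only $p$ fields and a single degree assignment misses this. One then observes (via \eqref{Yer}) that the maximizing $n$-tuple $I_\e$ in \eqref{bestI} can always be taken from just one of these two sub-collections, according to scale: if $\e<r$ the unweighted fields win and $I_\e=I_0$, while if $r<\e$ the weighted fields win and $I_\e=I_{\bar\e}$. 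In either case the exponential map $\Phi_{\e,v,x}$ differs from $\Phi_{0,\cdot,x}$ (resp.\ $\Phi_{\bar\e,\cdot,x}$) only by an explicit volume-preserving affine change in the $(u,v)$ coordinates, so the injectivity, Jacobian bounds, and box--ball containment are inherited directly from the NSW theorem at the single value $\e=0$ (resp.\ $\e=\bar\e$). No uniform BCH or Gronwall estimate is ever carried out. Your proposal leaves precisely this reduction as a black box, and without it the argument is incomplete.
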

We present here a rather detailed proof of this result, amending some minor gaps in the exposition in \cite{CCR}.
If the subRiemannian structure is equiregular, as an original contribution of this paper, in Theorem \ref{epsilon-equiv} we also present a  quantitative version of this result, by introducing an explicit quasi-norms equivalent to $d_\e$. These families of  quasi-norms play a role analogue to the one played by the Koranyi Gauge quasi-norm  \eqref{gauge} in the Heisenberg group.
We also sketch the proof of   the stability of Jerison's Poincare inequality from \cite{CCR}.

\begin{thrm}\label{Poincare-epsilon}
 Let $K\subset \subset \R^n$ and $\e_0>0$. 
The vector fields $(X^\e_i)_{i=1\cdots p}$ satisfy the Poincare inequality
$$\int_{B_\e(x,R)} |u-u_{B_\e(x,r)}| dx \le C_P\int_{B_\e(x,2r)} |\nabla^\e u| dx$$
with a constant $C_P$ depending on $K, \e_0$ and the subRiemannian structure, but independent of $\e$. Here we have denoted by $\nabla^\e u$ the gradient of $u$ along the frame $X_1^\e,...,X_p^\e$.
\end{thrm}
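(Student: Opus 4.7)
The plan is to adapt the proof of Jerison's Poincar\'e inequality \cite{jer:poincare} to the one-parameter family of frames $X_1^\e,\ldots,X_p^\e$, tracking constants carefully so that they depend only on $K$, $\e_0$ and on the underlying subRiemannian structure. The key preliminary tool is a uniform Nagel-Stein-Wainger ball-box description of $B_\e(x,r)$, provided by the quantitative construction of Theorem \ref{epsilon-equiv} together with the results of \cite{cittimanfredini:uniform}: for $x\in K$ and $0<r<R$, there is an explicit quasi-norm $\rho_\e$ equivalent to $d_\e$ with constants independent of $\e$, and the metric ball $B_\e(x,r)$ is comparable to a box $Q_\e(x,r)$ parametrized by an anisotropic exponential map of the extended frame, with weights $\lambda_i^\e(r)$ that interpolate between the Riemannian regime at scales $r\ll\e$ and the subRiemannian one at scales $r\gg\e$. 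The Jacobian of this parametrization is bounded from above and below uniformly in $\e$.

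Given this ball-box picture, the first substantive step is a weak $L^1$ Poincar\'e inequality on boxes: there exists $C>0$ independent of $\e$ such that
\begin{equation*}
\int_{Q_\e(x,r)} \bigl|u-u_{Q_\e(x,r)}\bigr|\,dy \;\le\; C r \int_{Q_\e(x,Cr)} |\nabla^\e u|\,dy.
\end{equation*}
Following Jerison, I would write $u(y)-u(z)$ as the line integral of $\nabla^\e u$ along a piecewise horizontal path joining $y$ and $z$ inside an enlarged box, built by composing flows of the $X_i^\e$ with travel times prescribed by $\rho_\e$. Integrating the resulting inequality against the product measure on $Q_\e(x,r)\times Q_\e(x,r)$ and changing variables through the anisotropic parametrization produces the claimed bound. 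Uniformity in $\e$ is inherited from three ingredients: uniform non-degeneracy of the Jacobian above, the fact that the total length of the constructed path in the $g_\e$ metric is $O(r)$ uniformly in $\e$, and the fact that $|\nabla^\e u|$ controls each directional derivative $X_i^\e u$ with constant one by construction.

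Passing from the weak box-to-box inequality to the stated ball-to-ball inequality is the final step, and follows from a standard Boman-type chaining argument as developed by Saloff-Coste and Hajlasz-Koskela, whose only inputs are the uniform doubling of Theorem \ref{Main-1} and the weak Poincar\'e bound of the previous step; since both hold with constants depending only on the admissible data, so does $C_P$. The main obstacle lies in the second step: even though for each fixed $\e>0$ the frame $(X_i^\e)$ is uniformly elliptic and a classical Euclidean Poincar\'e inequality is available, its constant degenerates as $\e\to 0$, and one genuinely needs to mimic the subRiemannian path-construction uniformly in $\e$, in particular navigating the change of scaling regime at $r\sim \e$ that is the very reason for the multi-scale terminology.
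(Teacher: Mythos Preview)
Your outline is essentially correct and would lead to a proof, but it takes a different route from the paper's. You propose to redo Jerison's argument directly: establish a weak Poincar\'e inequality on the anisotropic boxes $Q_\e(x,r)$ via line integrals along piecewise horizontal paths, and then upgrade to balls by a Boman--Hajlasz--Koskela chaining, using the uniform doubling of Theorem~\ref{Main-1}. The paper instead invokes Theorem~2.1 of Lanconelli--Morbidelli \cite{LanMor} as a black box: it suffices to exhibit, uniformly in $\e$, a map $E(x,u)=\Phi_{\e,0,x}(u)$ from a cube onto a neighborhood of $x$ covering $B_\e(x_0,2r)$, with two-sided Jacobian bounds, and a subunit path $\gamma(x,u,t)$ joining $x$ to $E(x,u)$ whose Jacobian in $x$ is bounded below. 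These hypotheses are exactly what Proposition~\ref{indipendenza} and Theorem~\ref{MAINNSWeps} provide, together with the flow-Jacobian estimates of \cite{GarofaloNhieu:lip1998,frss:embedding}. The Lanconelli--Morbidelli route is shorter because it packages the path construction and the integration/change-of-variables step into one lemma whose constants depend only on the doubling constant and the $\alpha_i$; your approach is more self-contained but requires you to carry out the path construction and the chaining by hand.

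One caution on your inputs: you rely on Theorem~\ref{epsilon-equiv} for the uniform ball-box description and the explicit quasi-norm $\rho_\e$, but that theorem is stated only for \emph{equiregular} structures. For general H\"ormander vector fields the relevant uniform ball-box statement is Theorem~\ref{MAINNSWeps} together with Proposition~\ref{indipendenza}, which give the map $\Phi_{\e,v,x}$ and its Jacobian bounds with constants independent of $\e$, but no global pseudo-norm. Your argument goes through with this substitution, since the path construction and the chaining only need the parametrized boxes and uniform doubling, not an explicit gauge.
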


Our next results concerns the stability, as $\e\to 0$, of the Gaussian estimates for the heat kernels associated to the family of 
second order,  sub-elliptic differential equations in non divergence form 
$$L_{\e, A} u\equiv \p_t u- \sum_{i,j=1}^p  a^\e_{ij} X_i^\e X_j^\e u=0 ,$$
in a cylinder $ Q=\Om\times (0,T)$. Here $\{a_{ij}^\e\}_{i,j=1,...,p}$ is a constant real matrix such that \begin{equation}\label{coerc1} \frac{1}{2}\Lambda^{-1} \sum_{i=1}^p \xi_i^2 
 \leq \sum_{i,j=1}^p a_{ij}^\e \xi_i \xi_j \leq 2\Lambda \sum_{i=1}^p \xi_i^2,\end{equation}
for all $\xi \in \R^p$,  uniformly in $\e>0$ and
 \begin{equation}\label{coerc2} \Lambda^{-1} \sum_{i=1}^m \xi_i^2 
 \leq \sum_{i,j=1}^m a_{ij}^\e \xi_i \xi_j \leq \Lambda \sum_{i=1}^m \xi_i^2,\end{equation}
 for all $\xi\in \R^m$ and $\e>0$.

\begin{thrm}  Let $K\subset \subset \R^n, \Lambda>0$ and $\e_0>0$. 
The fundamental solution $\Gamma_{\e, A}$ of the operator 
 $L_{\e, A}$, is a kernel with  exponential 
decay of order $2$, uniform with respect to $\epsilon\in [0,\e_0]$ 
and for any coefficients matrix  $A$ satisfying the bounds above for  the  fixed $\Lambda>0$. In particular, the following estimates hold:

\begin{itemize}
\item{For every $K\subset\subset\Omega$ 
there exists a constant $C_\Lambda>0$  depending on $\Lambda$ but independent of $\e\in [0,\e_0]$, and of the matrix 
$A$ such that for each $\e\in [0,\e_0]$, $x,y\in K$ and $t>0$ one has
\begin{equation}\label{gaussiana}
C_\Lambda^{-1} \frac{e^{-C_\Lambda\frac{d_\e(x,y)^2}{t}}} {|B_\e (x, \sqrt{t})|}\le P_{\e, A^\e}(x,y,t)\le C_\Lambda\frac{ e^{-\frac{d_\e(x,y)^2}{C_\Lambda t}}} {|B_\e (x, \sqrt{t})|}.
\end{equation}}
\item{For  $s\in \N$ and $k-$tuple $(i_1,\ldots,i_k)\in \{1,\ldots,m\}^k$ there exists a constant $C_{s,k}>0$ depending only on $k,s,X_1,...,X_m,\Lambda$ such that
\begin{equation}
|(\p_t^s X_{i_1}\cdots X_{i_k} P_{\e, A^\e})(x,y,t)| \le C_{s,k}    \frac{t^\frac{-2s-k}{2} e^{-\frac{d_\e(x,y)^2}{C_\Lambda t}}} {|B_\e(x, \sqrt{t})|}
\end{equation}
for all $x,y\in K$ and $t>0$.}
\item{For any $A_1,A_2\in M_\Lambda$, $s\in \N$ and $k-$tuple $(i_1,\ldots,i_k)\in \{1,\ldots,m\}^k$ there exists $C_{s,k}>0$ depending only on $k,s,X_1,...,X_m, \Lambda$ such that
\begin{equation}\label{AmenA}
|(\p_t^s X_{i_1}\cdots X_{i_k} P_{\e, A_1})(x,y,t) - \p_t^s X_{i_1}\cdots X_{i_k} P_{\e, A_2})(x,y, t) |\leq
\end{equation}
$$ \le ||A_1 - A_2||C_{s,k}  \frac{t^\frac{h-2s-k}{2} e^{-\frac{d_\e(x,y)^2}{C_\Lambda t}}} {|B_\e(x, \sqrt{t})|},$$ where $||A||^2:=\sum_{i,j=1}^n a_{ij}^2$.}
\end{itemize}

Moreover, if $\Gamma_A$ denotes the fundamental solution of the operator 
$L_\A=\sum_{i,j=1}^m a_{ij}^0X_iX_j$, then one has
\begin{equation}{ X}^\e_{i_1}\cdots { X}^\e_{i_k} \p_t^s  \Gamma_{\e, A^\e}\to {X}_{i_1}\cdots {X}_{i_k}\p_t^s \Gamma_{A^0}\end{equation}
as $\e\to 0$  uniformly on compact sets and in a dominated way  on subcompacts of $\Om$.

\end{thrm}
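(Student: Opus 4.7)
The plan is to deduce the uniform Gaussian bounds \eqref{gaussiana} from Theorems \ref{Main-1} and \ref{Poincare-epsilon} by a careful accounting of the constants in the Saloff-Coste/Grigoryan equivalence recalled in the introduction. For each $\e\in(0,\e_0]$ the operator $L_{\e,A}$ is classically uniformly elliptic on compacta (albeit with ellipticity constants degenerating as $\e\to 0$), while at $\e=0$ one works with the intrinsic subRiemannian Dirichlet form generated by $X_1,\ldots,X_m$. The decisive point is that Saloff-Coste's theorem produces constants in the two-sided Gaussian bound depending only on $\Lambda$, on the doubling constant $C_D$, and on the Poincar\'e constant $C_P$; by the two stability results just quoted, these quantities may be taken independent of $\e\in[0,\e_0]$. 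A more constructive, alternative route, following \cite{cittimanfredini:uniform}, would be to build a Nagel--Stein--Wainger/Jerison--Sanchez-Calle parametrix for $L_{\e,A}$ uniformly in $\e$ by lifting to free vector fields, freezing coefficients and using the explicit heat kernels on the nilpotent approximants.

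For the derivative estimates in the second bullet, I would combine uniform Caccioppoli/Moser iteration adapted to the frame $X_1^\e,\ldots,X_p^\e$ with a parabolic rescaling argument: each horizontal derivative $X_i^\e$ applied to a nonnegative solution of $L_{\e,A}w=0$ is controlled by an $L^\infty$ norm of $w$ on a slightly enlarged cylinder, with constants depending only on $\Lambda,C_D,C_P$; iteration up to order $k$ and $s$, combined with the pointwise Gaussian bound on $w$ itself, produces the factor $t^{-s-k/2}$. For the Lipschitz-in-$A$ bound \eqref{AmenA} I plan to use Duhamel's formula: if $v=P_{\e,A_1}-P_{\e,A_2}$, then
\begin{equation*}
L_{\e,A_1}v \;=\; \sum_{i,j=1}^{p} (A_1-A_2)_{ij}\, X_i^\e X_j^\e P_{\e,A_2},
\end{equation*}
and representing $v$ as the convolution of $P_{\e,A_1}$ against this source, combined with the Gaussian and derivative bounds from the first two bullets, yields the claim; an Aronson-type semigroup identity collapses the resulting double time--space convolution back to a single Gaussian, with the time exponent $(h-2s-k)/2$ absorbing the integration in $s$.

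The final convergence statement then follows by a compactness argument: the uniform bounds just established yield equicontinuity of the family $\{X^\e_{i_1}\cdots X^\e_{i_k}\p_t^s \Gamma_{\e,A^\e}\}_\e$ on compact subsets of $\{x\neq y\}\times\{t>0\}$, while the Gaussian upper bounds provide the equi-integrability needed to pass to the limit in the distributional formulation near the diagonal and at $t=0$. Any subsequential limit solves the heat equation $L_{A^0}u=0$ with the correct Dirac initial datum, and uniqueness of the fundamental solution (guaranteed by the lower Gaussian bound at $\e=0$) forces it to equal $X_{i_1}\cdots X_{i_k}\p_t^s \Gamma_{A^0}$; the entire family thus converges, not merely along subsequences. \textbf{The main obstacle} I expect is the precise bookkeeping of constants in the Saloff-Coste/Grigoryan machinery, so as to rule out any hidden dependence on Riemannian-only quantities (sectional curvature of $g_\e$, injectivity radius, \emph{etc.}, which blow up as $\e\to 0$); this forces one to invoke the axiomatic version of those theorems on spaces of homogeneous type, taking uniform doubling and Poincar\'e as the sole inputs, and to verify that every step of the Moser iteration and the parametrix construction sees only those two quantities plus $\Lambda$.
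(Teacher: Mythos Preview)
Your primary route via the Saloff-Coste/Grigoryan equivalence has a structural mismatch and, more seriously, cannot reach the full statement. First, that machinery is built for symmetric Dirichlet forms, i.e.\ divergence-form operators $\partial_t u + \sum (X_i^{\e})^*(a_{ij}X_j^\e u)$; here $L_{\e,A}=\partial_t-\sum a_{ij}^\e X_i^\e X_j^\e$ is non-divergence, and since the $X_i^\e$ are not divergence-free the two differ by a drift whose uniform-in-$\e$ treatment you would have to supply separately. Second, and more fundamentally, the output of the doubling-plus-Poincar\'e axiomatics is the two-sided Gaussian bound and H\"older continuity of solutions; it does \emph{not} produce pointwise control of $X_{i_1}^\e\cdots X_{i_k}^\e \partial_t^s$ for arbitrary $k,s$. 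Your Caccioppoli/Moser step gives $L^2$ control of one horizontal gradient, not $L^\infty$ control of $k$-th derivatives, and you cannot iterate because $X_i^\e$ does not commute with $L_{\e,A}$ (the frame is non-abelian), so $X_i^\e\Gamma_{\e,A}$ is not a solution of the same equation. The Duhamel argument for \eqref{AmenA} and the compactness argument for the convergence are fine \emph{once} the first two bullets are in hand, but they both rest on the derivative bounds you have not secured.

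The paper in fact takes the route you mention only as an aside, and with an extra twist. It performs a \emph{double} lifting: first Rothschild--Stein to free vector fields $\tilde X_i$ in $\R^\nu$, then a second lifting that adjoins a copy $\hat Z_1,\ldots,\hat Z_\nu$ of the same free structure, yielding a frame $\bar X_i^\e=(\tilde X_1,\ldots,\tilde X_m,\hat Z_1,\ldots,\hat Z_m,\tilde X_{m+1}^\e+\hat Z_{m+1},\ldots)$ on $\R^\nu\times\R^\nu$. The purpose of this second lift is that there is a volume-preserving change of variables $\bar F_\e$ on the lifted space with $\bar\Gamma_{\e,A}(\bar x,\bar y,t)=\bar\Gamma_{0,A}(\bar F_\e\bar x,\bar F_\e\bar y,t)$ and $\bar d_\e(\bar x,\bar y)=\bar d_0(\bar F_\e\bar x,\bar F_\e\bar y)$: every estimate for the lifted $\e$-operator is literally the pull-back of an estimate for a single $\e$-independent operator $\bar L_{0,A}$, for which the full Jerison--Sanchez-Calle/BBLU theory (Gaussian bounds, all higher derivatives, and the $A$-Lipschitz dependence) is already known. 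Integrating out the added variables then gives $\Gamma_{\e,A}$, and the convergence as $\e\to0$ follows directly from $\bar F_\e\to\mathrm{id}$ plus dominated convergence rather than from an abstract compactness argument. This structural reduction is what delivers the derivative bounds uniformly in $\e$; the homogeneous-space machinery alone cannot substitute for it.
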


One of our main result in this paper is the extension  to the H\"ormander vector fields setting 
 of the Carnot groups Schauder estimates established in previous work with Manfredini  in \cite{CCM3}.
To prove such extension we combine the Gaussian bounds above  with a refined  version of Rothschild and Stein \cite{Roth:Stein} freezing and lifting scheme, adapted to the multi-scale setting, to establish Schauder type  estimates which are uniform in $\e\in [0,\e_0]$,  for  the family of 
second order,  sub-elliptic differential equations in non divergence form 
$$L_{\e, A} u\equiv \p_t u- \sum_{i,j=1}^n  a^\e_{ij}(x,t) X_i^\e X_j^\e u=0 ,$$
in a cylinder $ Q=\Om\times (0,T)$.  Our standing assumption is that the coefficients of the operator satisfy \eqref{coerc1}, and \eqref{coerc2} for some fixed $\Lambda>0$.

%
%
%
%

\begin{thrm}\label{main-schauder}
Let   $\alpha\in (0,1), f\in C^{\infty}(Q)$ and $w$ be a smooth  solution of $L_{\e, A}w=f$ on ${Q}$.
Let $K$ be a compact sets such that  $K\subset\subset {Q}$,  set $2\delta=d_0(K, \p_p Q)$ and
denote by $K_\delta$ the $\delta-$tubular neighborhood of $K$. Assume that 
there exists a constant $C>0$ such that 
$$ || a_{ij}^\e||_{C^{k,\alpha}_{\e,X}(K_\delta)} \leq C,$$ for some value $k\in \N$ and for every $\e\in [0,\e_0]$.
There exists a constant $C_1>0$ depending  on
$\alpha$, $C$, $\e_0$, $\delta$, and the constants in Proposition \ref{uniform heat kernel estimates},
but independent of $\e$,  such that
$$||w||_{C^{k+2, \alpha}_{\e,X}(K)} \leq C_1 \left( ||f||_{C^{k,\alpha}_{\e,X}(K_\delta)}+ ||w||_{C^{k+1, \alpha}_{\e,X}(K_\delta)}\right). $$
Here we have set 
 $$||u||_{C_{\e,X}^{\alpha}({Q})}=\sup_{(x,t)\neq(x_{0},t_0)} \frac {|u(x,t) - u(x_{0},t_0)|}{\tilde d_{\e }^\alpha((x,t), (x_{0},t_0))}+ \sup_{Q} |u|.$$
 and 
 if $k\geq 1$  we have  let  $u \in
C_{\e,X}^{k,\alpha}({Q})$  if for all $i=1,\ldots ,m$, one has 
 $X_i \in C_{\e,X}^{k-1,\alpha}({Q})$.
\end{thrm}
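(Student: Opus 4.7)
The plan is to reduce to the base case $k=0$ by induction, and then establish the base case by a Rothschild--Stein style freezing/lifting scheme adapted to the multi-scale geometry, using the uniform heat kernel estimates of the previous theorem as the key analytic input.

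For the inductive step, assuming the estimate for $k-1$, I would differentiate $L_{\e,A}w=f$ along a vector field $X_\ell^\e$, obtaining
$$L_{\e,A}(X_\ell^\e w)= X_\ell^\e f + \sum_{i,j}(X_\ell^\e a_{ij}^\e)\,X_i^\e X_j^\e w + \sum_{i,j}a_{ij}^\e\,[X_\ell^\e,X_i^\e X_j^\e]w.$$
Since $X_1^\e,\dots,X_p^\e$ bracket-generate $T\R^n$ uniformly in $\e$, the commutator is rewritten as a combination of second-order $X^\e$ derivatives whose coefficients are controlled by the structural constants; the inductive hypothesis applied to each $X_\ell^\e w$, together with the usual interpolation between $C^{k+2,\alpha}_{\e,X}$ and $C^{k,\alpha}_{\e,X}$, then yields the desired bound.

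For the base case, fix $(x_0,t_0)\in K$, set $A^\e_0:=(a_{ij}^\e(x_0,t_0))$, and rewrite the equation in frozen form
$$L_{\e,A^\e_0} w = f+\sum_{i,j}\big(a_{ij}^\e(x_0,t_0)-a_{ij}^\e(x,t)\big)X_i^\e X_j^\e w.$$
Multiplying by a cutoff $\eta$ supported in a small parabolic $d_\e$-cylinder inside $K_\delta$ and representing $\eta w$ via the Duhamel formula against the heat kernel $P_{\e,A^\e_0}$, I apply $X_i^\e X_j^\e$ to the representation and evaluate at $(x_0,t_0)$. The uniform Gaussian bounds on $X_i^\e X_j^\e P_{\e,A^\e_0}$ and the stability estimate \eqref{AmenA} let one treat the resulting kernel as a standard Calder\'on--Zygmund kernel on the space of homogeneous type $(\R^n,d_\e,dx)$, with constants independent of $\e$ thanks to the uniform doubling and Poincar\'e inequalities of Theorems \ref{Main-1} and \ref{Poincare-epsilon}. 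H\"older continuity of the $a_{ij}^\e$ provides the cancellation that makes the singular integral bounded on $C^{0,\alpha}_{\e,X}$, the frozen-coefficient term is absorbed using the smallness of $|a_{ij}^\e(x_0,t_0)-a_{ij}^\e(x,t)|$ on $\supp\eta$, and the boundary pieces involving $\nabla\eta$ are bounded by $\|w\|_{C^{1,\alpha}_{\e,X}(K_\delta)}$.

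To close the argument with respect to the intrinsic H\"older norm based on $\tilde d_\e$, I would follow the Rothschild--Stein lifting of $(X_i^\e)$ to free vector fields on a higher-dimensional nilpotent model, and then compare $P_{\e,A^\e_0}$ with its nilpotent approximation; the uniform kernel estimates provide an error gain in positive $d_\e$-homogeneity, exactly as in \cite{CCM3}. The main obstacle is that all of this must be done uniformly in $\e$, while the Hausdorff dimension of $(\R^n,d_\e)$ is not constant and jumps to the homogeneous dimension $Q$ at $\e=0$. The point, as in \cite{cittimanfredini:uniform}, is that the nilpotent approximation of the lifted frame carries an intrinsic homogeneity that interpolates continuously between the Riemannian scale (below $\e$) and the subRiemannian one (above $\e$); combined with the uniform heat kernel bounds of the previous theorem and the quantitative quasi-norm of Theorem \ref{epsilon-equiv}, this yields constants that stay bounded as $\e\to 0$.
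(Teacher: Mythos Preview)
Your base case ($k=0$) is essentially the paper's argument: freeze the coefficients at $(x_0,t_0)$, represent $w\phi$ via the heat kernel $\Gamma^\e_{(x_0,t_0)}$, apply two horizontal derivatives, and use the uniform Gaussian bounds together with the smallness of $a_{ij}^\e-a_{ij}^\e(x_0,t_0)$ on a small $d_\e$-ball to absorb the top-order term. This matches Proposition~\ref{elle2}.

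The inductive step, however, does not close as written. When you differentiate the equation along a horizontal $X_\ell$, the commutator term $\sum_{i,j} a_{ij}^\e[X_\ell, X_i^\e X_j^\e]w$ is, once expressed in the \emph{horizontal} frame in which the $C^{k,\alpha}_{\e,X}$ norms are defined, a genuine third-order operator on $w$: each $[X_\ell,X_i]$ is a degree-two commutator, which expanded as $X_aX_b-X_bX_a$ contributes two horizontal derivatives, and composing with $X_j^\e$ gives order three. Hence its $C^{k-1,\alpha}_{\e,X}$ norm already requires control of $\|w\|_{C^{k+2,\alpha}_{\e,X}}$, the very quantity you are estimating, and there is no small factor in front to absorb it (the coefficient is $a_{ij}^\e$, not $a_{ij}^\e-a_{ij}^\e(x_0,t_0)$). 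Interpolation cannot remove this circularity. Your alternative, writing the commutator as a second-order operator in the full frame $(X_i^\e)_{i\le 2p-m}$, runs into two problems: those non-horizontal derivatives are not part of the $C^{k,\alpha}_{\e,X}$ norm, and the structure constants $c_{jk}^l$ of that frame are explicitly noted (remark following Definition~\ref{prima-def}) to be \emph{unbounded} as $\e\to 0$, so ``controlled by the structural constants'' fails uniformly.

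The paper avoids this by never differentiating the equation. Instead it differentiates the \emph{representation formula} of Lemma~\ref{representation} and invokes Proposition~\ref{uniformderivatives}: for each $X_{i_1}^\e$ applied in the $x$-variable to the integral operator there exist kernels $P_{\e,i_1,h},\,R_{\e,i_1}\in\mathcal E(2,d_\e,M^\e_{m,\Lambda})$ (built through the Rothschild--Stein lifting and freezing of Section~5) which convert the $x$-derivative of the kernel into $y$-derivatives of the integrand plus a lower-order remainder. After $k$ such transfers, Lemma~\ref{kderiv} shows that the leading term of $X_{h_k}^{\e,y}\cdots X_{h_1}^{\e,y}(L_{\e,(x_0,t_0)}-L_\e)(w\phi)$ still carries the factor $(a_{ij}^\e-a_{ij}^\e(x_0,t_0))$ in front of the $(k+2)$-th order derivatives. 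It is precisely this surviving small factor that makes the absorption go through, uniformly in $\e$; differentiating the PDE destroys it.
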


Analogous estimates in the $L^p$ spaces, for operators independent of $\e$ are well known (see for instance \cite{Roth:Stein} for the constant coefficient case and \cite{BLU} for the Carnot group setting). 
Our result yield  a stable version, as $\e\to 0$, of such estimates, which is valid for any family of H\"ormander vector fields.

\begin{thrm}
Let   $\alpha\in (0,1), f\in C^{\infty}(Q)$ and $w$ be a smooth  solution of $L_{\e, A}w=f$ on ${Q}$. Let $K$ be a compact sets such that  $K\subset\subset {Q}$,  set $2\delta=d_0(K, \p_p Q)$ and
denote by $K_\delta$ the $\delta-$tubular neighborhood of $K$.   Assume that 
there exists a constant $C>0$ such that 
$$ || a_{ij}^\e||_{C^{k,\alpha}_{\e,X}(K_\delta)} \leq C,$$ for some value $k\in \N$ and for every $\e\in [0,\e_0]$.
For any $p>1$, there exists a constant $C_1>0$ depending  on
$p, \alpha$, $C$, $\e_0$, $\delta$, and the constants in Proposition \ref{uniform heat kernel estimates},
but independent of $\e$,  such that
$$||w||_{W^{k+2, p}_{\e,X}(K)} \leq C_1 \left( ||f||_{W^{k,p}_{\e,X}(K_\delta)}+ ||w||_{W^{k+1, p}_{\e,X}(K_\delta)}\right). $$
Here we have set $$||w||_{W^{k,p}_{\e,X}} :=\sum_{i=1}^k \sum_{| I |=i} ||X^\e_{i_1} ...  X^\e_{i_k}  w||_{L^p}.$$
\end{thrm}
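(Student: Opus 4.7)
The plan is to deduce the Sobolev estimate from the uniform Gaussian bounds just stated (Proposition \ref{uniform heat kernel estimates}) by running the classical frozen-coefficient plus Calder\'on--Zygmund argument inside the space of homogeneous type $(\R^{n+1}, \tilde d_\e, dx\,dt)$, making sure every constant produced depends only on $\Lambda, p, k, \alpha, \delta$ and on quantities already shown to be $\e$-uniform (doubling, Poincar\'e, heat-kernel decay).

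First, fix $(x_0,t_0)\in K_\delta$ and freeze the coefficients there. The constant-coefficient operator $L_{\e, A(x_0,t_0)}$ has fundamental solution $P_{\e, A(x_0,t_0)}$ satisfying all the bounds of the preceding theorem, and a smooth solution $w$ admits the Duhamel representation
$$w(x,t)=\int_0^t\!\!\int P_{\e,A(x_0,t_0)}(x,y,t-s)\bigl[f(y,s)+\bigl(A(x_0,t_0)-A(y,s)\bigr)\cdot (X^\e)^2 w(y,s)\bigr]dy\,ds + \text{(initial term)}.$$
Applying two horizontal derivatives $X_i^\e X_j^\e$ and invoking the kernel estimate
$$|X_i^\e X_j^\e P_{\e,A}(x,y,t-s)| \le \frac{C\,e^{-d_\e(x,y)^2/(C(t-s))}}{(t-s)\,|B_\e(x,\sqrt{t-s})|},$$
we recover $(X^\e)^2 w$ as a parabolic singular integral applied to $f$ plus a similar operator applied to $(A(x_0,t_0)-A)\cdot (X^\e)^2 w$. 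The uniform doubling of Theorem \ref{Main-1} makes $(\R^{n+1},\tilde d_\e,dx\,dt)$ a space of homogeneous type with $\e$-independent constants, so the Coifman--Weiss machinery, together with an $L^2$-bound obtained from the semigroup identity $\p_t P_{\e,A}=L_{\e,A}P_{\e,A}$ and Cotlar--Stein almost orthogonality, yields $\e$-uniform $L^p$-boundedness of the convolution operator with kernel $X_i^\e X_j^\e P_{\e,A}$ for every $1<p<\infty$.

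A localization and covering argument then handles the variable-coefficient perturbation. Because the oscillation of $A$ can be made arbitrarily small on a sufficiently small $\tilde d_\e$-cylinder (with size controlled only by the $C^\alpha_{\e,X}$-norm of $A$), the operator $w\mapsto T_A\bigl((A(x_0,t_0)-A)\cdot (X^\e)^2 w\bigr)$ may be absorbed into the left-hand side, giving the base case $k=0$. For higher $k$ we commute horizontal derivatives with the equation: $X_\ell^\e w$ solves
$$L_{\e,A}(X_\ell^\e w)=X_\ell^\e f+[X_\ell^\e, L_{\e,A}]\,w,$$
where the commutator is a second-order operator whose coefficients involve at most one derivative of $a_{ij}^\e$ and therefore lie in $C^{k-1,\alpha}_{\e,X}$ with $\e$-uniform norms. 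A bootstrap on $k$ combined with standard interpolation on the Sobolev scale $W^{m,p}_{\e,X}$ closes the induction.

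The main technical obstacle is verifying the Calder\'on--Zygmund hypotheses $\e$-uniformly. The homogeneous dimension of $(\R^n,d_\e)$ jumps from $n$ (for $\e>0$) to the subRiemannian homogeneous dimension $Q$ at $\e=0$, so any a priori estimates that relied on a fixed Riemannian smooth structure or on curvature bounds degenerate. What saves us is that, thanks to Theorems \ref{Main-1} and \ref{Poincare-epsilon} and the stable heat-kernel estimates \eqref{gaussiana}--\eqref{AmenA}, the Christ--Coifman--Weiss framework can be implemented using only the metric quantities $|B_\e(x,r)|$ and $d_\e(x,y)$; no use of curvature or of a fixed background metric is made. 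It is precisely this point that keeps the resulting $L^p$-constants bounded as $\e\to 0$.
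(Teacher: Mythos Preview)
Your base case $k=0$ is fine and is the natural $L^p$ analogue of what the paper does: the uniform doubling (Theorem \ref{Main-1}) and the Gaussian bounds of Proposition \ref{uniform heat kernel estimates} put you squarely in the Coifman--Weiss framework with $\e$-independent constants, and the frozen-coefficient absorption on small $\tilde d_\e$-cylinders goes through exactly as in the Schauder proof (Proposition \ref{elle2}).

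The gap is in your induction step. You write that $[X_\ell^\e,L_{\e,A}]$ is ``a second-order operator whose coefficients involve at most one derivative of $a_{ij}^\e$'', but for H\"ormander vector fields this is not true in the relevant sense. One has
\[
[X_\ell,\,a_{ij}X_iX_j]=(X_\ell a_{ij})X_iX_j+a_{ij}\bigl([X_\ell,X_i]X_j+X_i[X_\ell,X_j]\bigr),
\]
and for $\ell,i\le m$ the bracket $[X_\ell,X_i]$ is a commutator of degree $\ge 2$, \emph{not} a horizontal field. In the extended frame this reads $[X_\ell,X_i]=\e^{-(d(k)-1)}X_k^\e$ for some $k>m$, so either the coefficients blow up like $\e^{-1}$, or---if you stay in the horizontal frame---the term $[X_\ell,X_i]X_jw$ is genuinely of horizontal order three and enters the right-hand side as $C\|w\|_{W^{3,p}_{\e,X}(K_\delta)}$ with a structural constant $C$ that is \emph{not} small. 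You then cannot absorb it, and the bootstrap does not close $\e$-uniformly. (In the Euclidean or Carnot-group case the commutators vanish or can be handled by translation invariance; here neither shortcut is available.)

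The paper avoids this by never commuting vector fields through the operator. Instead it differentiates the \emph{representation formula} (Lemma \ref{representation}) and invokes Proposition \ref{uniformderivatives}, which---via the Rothschild--Stein lifting and the second lifting of Section 5.3---rewrites each $X_i^{\e,x}$ acting on the heat kernel as $\sum_h (X_h^{\e,y})^*$ acting on new kernels $P_{\e,i,h}\in\mathcal E(2,d_\e,M^\e_{m,\Lambda})$, plus a harmless remainder. After integrating by parts, the $y$-derivative lands on $(L_{\e,(x_0,t_0)}-L_\e)(w\phi)$ and only the Leibniz rule is used (Lemma \ref{kderiv}): the principal part keeps the small factor $a_{ij}^\e-a_{ij}^\e(x_0,t_0)$, while the lower-order piece $B$ has \emph{horizontal} order $k+1$. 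Iterating $k$ times and then applying the singular-integral bound yields the estimate with constants uniform in $\e$. Your argument can be repaired by replacing the PDE-differentiation step with this mechanism; the Calder\'on--Zygmund part you wrote then handles the final two derivatives.
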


\section{Definitions and preliminary results}

Let $X=(X_1,...,X_m)$ denote a collection of  smooth vector fields defined in an open subset $\Om\subset\R^n$ satisfying H\"ormander's finite rank condition \eqref{Hor}, that is {\it there exists an integer $s$ such that the set of all vector fields, along with their commutators up to order $s$ spans $\R^n$ for every point in $\Omega$},\begin{equation}    rank \,\, Lie
(X_1,\ldots ,X_m)(x)=n, \quad \text{ for all }l x\in \Omega.\end{equation}

\begin{ex}\label{heisenberg-ex} The standard example for such families is the Heisenberg group $\Hone$. This is a Lie group whose underlying manifold is $\R^3$ and is endowed with a group law $(x_1,x_2,x_3)(y_1,y_2,y_3)=(x_1+y_1, x_2+y_2, x_3+y_3-(x_2y_1-x_1y_2))$. With respect to such law one has that the vector fields $X_1=\p_{x_1}-x_2 \p_{x_3}$ and $X_2=\p_{x_2}+x_1\p_{x_3}$ are left-invariant. Together with their commutator $[X_1,X_2]=2\p_{x_3}$ they yield a basis of $\R^3$. A second example is given by the classical group of rigid motions of the plane, also known as the {\it roto-translation} group $\mathcal {RT}$.  This is a Lie group with underlying manifold $\R^2\times S^1$
and a group law $(x_1,x_2,\theta_1)(y_1,y_2,\theta_2)=(x_1+y_1\cos \theta-y_2\sin\theta, x_2+y_1\sin \theta+y_2\cos\theta, \theta_1+\theta_2)$.
\end{ex}

 Following Nagel, Stein and Wainger, \cite[page 104]{NSW} we define 
\begin{equation}\label{NSWspaces} X^{(1)}=\{ X_1,...,X_m\}, \ X^{(2)}=\{ [X_1,X_2],...,[X_{m-1},X_m]\}, \ etc. ...\end{equation}
letting $X^{(k)}$ denote the set of all commutators of order $k=1,...,r$.
Indicate by $Y_1,...,Y_p$ an enumeration of the components of $X^{(1)}, X^{(2)},...,X^{(r)}$ such that $Y_i=X_i$ for every $i\leq m$.  If $Y_k\in X^{(i)}$ we say that  $Y_k$ has a {\it formal} degree  $d(Y_k)=d(k) =i$. The collection of vector fields $\{Y_1,...,Y_p\}$ spans $\R^n$ at every point.

\begin{ex} If we consider the Heisenberg group vector fields $X_1=\p_{x_1}-x_2 \p_{x_3}$ and $X_2=\p_{x_2}+x_1\p_{x_3}$ with  $(x_1,x_2,x_3)\in \R^3$, then $X^{(1)}:=\{X_1,X_2\}$ and $X^{(2)}=\{2\p_{x_3}\}$. If we instead consider the vectors arising from the group of roto-translations one has $X_1=\cos\theta \p_{x_1}+\sin \theta \p_{x_2}$ and $X_2=\p_{\theta}$ with $(x_1,x_2, \theta)\in \R^2\times S^1$ and 
$X^{(1)}=\{X_1, X_2\} $ and $X^{(2)}=\{ \sin \theta \p_{x_1} - \cos \theta \p_{x_2}\}$. 
\end{ex}
\begin{ex}\label{esempio32} Note that  the sets $X^{(i)}$ may have  non-trivial intersection. For instance, consider the vector fields 
$$X_1=\cos\theta \p_{x_1}+\sin \theta \p_{x_2}; \ X_2=\p_{\theta}; \ X_3=\p_{x_3}; \text{ and }X_4=x_3^2 \p_{x_4}$$
in $(x_1,x_2,x_3,x_4,\theta)\in \R^4\times S^1$.  In this case $r=3$ and 
$$X^{(1)}=\{X_1, X_2, X_3, X_4\}; \ X^{(2)}=\{ \sin \theta \p_{x_1} - \cos \theta \p_{x_2}, \  2x_3\p_{x_4}  \}; \text{ and }X^{(3)}= \{\pm X_1, 2\p_{x_4}\}$$
with $Y_1=X_1, ..., Y_4= X_4, Y_5=\sin \theta \p_{x_1}-\cos \theta \p_{x_2}, y_6= 2x_3 \p_{x_4}, Y_7=X_1, Y_8=-X_1,$ and $Y_{10}=2\p_{x_4}$.
\end{ex}
\subsection{Carnot-Caratheodory distance} 
For each $x, y\in \Om$ and $\delta>0$ denote by $\Gamma(\delta)$ the space of all absolutely continuous curves $\gamma:[0,1]\to \R^n$, joining $x$ to $y$ (i.e., $\gamma(0)=x$ and $\gamma(1)=y$) which are tangent a.e. to the horizontal distribution $span \{X_1,...,X_m\}$, and such that if we write $$\gamma'(t)=\sum_{i=1}^m \alpha_i(t) X_i|_{\gamma(t)},$$ then
$\sum_{i=1}^m |\alpha_i(t)|\le \delta$ a.e. $t\in [0,1]$. The Carnot-Caratheodory distance between $x$ and $y$ is defined to be
\begin{equation}\label{distance-CC}
d_0(x,y):= \inf_{\Gamma(\delta)\neq \{\} } \delta.
\end{equation}
 In \cite{NSW}, the authors introduce several other  distances that eventually are proved to be equivalent to $d_0(x,y)$. The equivalence itself yields new insight into the Carnot-Caratheodory distance. Because of this,  we will remind the reader of one of these distances. For each $x,y\in \Om$ and $\delta>0$ denote by $\hat\Gamma(\delta)$ the space of all absolutely continuous curves $\gamma:[0,1]\to \R^n$, joining $x$ to $y$ and such that if one writes  $$\gamma'(t)=\sum_{i=1}^p\beta_i(t) Y_i|_{\gamma(t)},$$ then $|\beta_i (t)| \le \delta^{d(i)}.$ One then sets 
 $$\hat d (x,y):= \inf_{\hat \Gamma(\delta)\neq \{\}} \delta.$$
It is fairly straightforward (see \cite[Proposition 1.1]{NSW} to see that 
\begin{prop}\label{p11}
The function $\hat d$ is a distance function in $\Om$ and for any $K\subset \subset \Om$ there exists 
$C=C(X_1,...,X_m, K)>0$ such that
$$C^{-1} |x-y|\le \hat d(x,y)\le C |x-y|^{\max_i d(i)}.$$
\end{prop}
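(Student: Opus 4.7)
The plan is to verify the three metric axioms for $\hat d$ and then establish the two-sided comparison with Euclidean distance on a fixed compact $K\subset\subset \Om$. Throughout, the structural input is that $\{Y_1,\dots, Y_p\}$ spans $\R^n$ at every point of $\Om$, which comes from the H\"ormander condition \eqref{Hor}.

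First I would dispose of the metric properties. Symmetry is immediate: if $\gamma\in\hat\Gamma(\delta)$ joins $x$ to $y$, then $\tilde\gamma(t)=\gamma(1-t)$ joins $y$ to $x$ with coefficients $-\beta_i$, whose absolute values still obey $|\beta_i|\le\delta^{d(i)}$. Positivity will follow a posteriori from the Euclidean lower bound. For the triangle inequality I would use a reparametrized concatenation: given $\gamma_1\in\hat\Gamma_{x,z}(\delta_1)$ and $\gamma_2\in\hat\Gamma_{z,y}(\delta_2)$, run $\gamma_1$ on $[0,\delta_1/(\delta_1+\delta_2)]$ sped up by the factor $(\delta_1+\delta_2)/\delta_1$ and $\gamma_2$ on the remaining subinterval sped up analogously. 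The new coefficients satisfy $|\tilde\beta_i|\le (\delta_1+\delta_2)^{d(i)}$ because $(\delta_1+\delta_2)^{d(i)-1}\ge \delta_1^{d(i)-1}$ when $d(i)\ge 1$, and similarly for the second leg, giving $\hat d(x,y)\le\delta_1+\delta_2$.

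For the lower bound, I would use the uniform bound on $|Y_i|$ on $K$: a curve $\gamma\in\hat\Gamma_{x,y}(\delta)$ satisfies $|\gamma'(t)|\le C_K\sum_i\delta^{d(i)}$. If $\delta\le 1$ this is at most $C_Kp\,\delta$, and integrating yields $|x-y|\le C_Kp\,\delta$, hence $\delta\ge C^{-1}|x-y|$; if $\delta>1$ the bound $\hat d(x,y)\ge C^{-1}|x-y|$ follows from the finite diameter of $K$. Together with symmetry and the triangle inequality, this also gives positivity, finishing the fact that $\hat d$ is a distance. For the upper bound I would invoke the spanning property directly. On a compact neighborhood of any $\bar x\in\Om$ one can select indices $i_1,\dots,i_n$ so that $\{Y_{i_j}(x)\}$ is a basis of $\R^n$, and the multi-exponential map
\begin{equation*}
\Phi_x(t_1,\dots,t_n) = \exp(t_nY_{i_n})\circ\cdots\circ\exp(t_1Y_{i_1})(x)
\end{equation*}
has Jacobian at $0$ whose absolute value is bounded below uniformly in $x$. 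A quantitative inverse function theorem then shows that every nearby $y$ is reached by parameters satisfying $|t_j|\le C|x-y|$. Concatenating the $n$ exponential flows into a single curve on $[0,1]$ with each leg occupying a fraction $1/n$, the coefficient $\beta_{i_j}$ equals the constant $nt_j$ on the $j$-th leg, so the admissibility requirement $|nt_j|\le\delta^{d(i_j)}$ forces $\delta\ge (nC|x-y|)^{1/d(i_j)}$. Taking the worst case $d(i_j)=r:=\max_i d(i)$ delivers $\hat d(x,y)\le C'|x-y|^{1/r}$, which is the sharp form of the claimed upper bound.

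The main obstacle I anticipate is the uniformity of this final step: both the choice of the distinguished basis $\{Y_{i_1},\dots,Y_{i_n}\}$ and the domain of invertibility of $\Phi_x$ depend on the base point, and they must be controlled uniformly as $x$ ranges over $K$. A finite covering argument, together with the fact that the H\"ormander condition provides such a basis at every single point, upgrades the pointwise statement to the uniform constants $C,C'$ required. Once this uniformization is in place, the remaining pieces (symmetry, concatenation, the Euclidean speed bound) are essentially routine.
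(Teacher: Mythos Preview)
Your argument is essentially correct and follows the standard route; the paper itself does not give a proof but simply refers to \cite[Proposition 1.1]{NSW}, and what you wrote is precisely the kind of argument one finds there. Two small remarks are worth making.

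First, in the lower bound you bound $|\gamma'(t)|$ by $C_K\sum_i\delta^{d(i)}$ using the uniform bound on $|Y_i|$ over $K$, but the curve $\gamma$ may a priori leave $K$ even when its endpoints lie in $K$. The standard fix is to enlarge to a compact $K'$ with $K\subset\subset K'\subset\subset\Om$: either $\gamma$ stays in $K'$ and your speed estimate applies verbatim with $C_{K'}$ in place of $C_K$, or $\gamma$ exits $K'$, in which case the same speed bound up to the first exit time forces $\delta$ to be bounded below by a constant depending only on $\mathrm{dist}(K,\partial K')$ and $C_{K'}$. Either way $\hat d(x,y)\ge C^{-1}|x-y|$.

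Second, you correctly obtain $\hat d(x,y)\le C'|x-y|^{1/r}$ with $r=\max_i d(i)$. The statement as printed in the paper has the exponent $\max_i d(i)$ rather than $1/\max_i d(i)$; this is a typo (for small $|x-y|$ the printed upper bound would contradict the printed lower bound), and your version is the correct one, matching \cite[Proposition 1.1]{NSW}.
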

It is far less trivial to prove the following (see \cite[Theorem 4]{NSW})
\begin{thrm} The distance functions $d_0$ and $\hat d$ are equivalent.
\end{thrm}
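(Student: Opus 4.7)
The plan is to prove two one-sided inequalities on a fixed compact $K\subset\subset\Om$. The direction $\hat d \le d_0$ is essentially tautological: given any $\gamma\in\Gamma(\delta)$ with $\gamma'=\sum_{i=1}^m \alpha_i X_i$ and $\sum|\alpha_i|\le \delta$, the same curve lies in $\hat\Gamma(\delta)$ upon setting $\beta_i=\alpha_i$ for $i\le m$ and $\beta_i=0$ otherwise, since then $|\beta_i|\le\delta=\delta^{d(i)}$ for the horizontal indices and the inequality is trivial for the rest.

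For the reverse inequality $d_0\le C\hat d$, which carries the substantive content, the strategy is to replace each higher-order commutator flow appearing in a $\hat d$-admissible curve by a concatenation of flows along $X_1,\ldots,X_m$ of comparable total horizontal length. The key analytic tool is the Baker--Campbell--Hausdorff commutator approximation: for a commutator $Y_k$ of formal degree $d(k)$, the flow $\exp(s^{d(k)} Y_k)$ agrees, up to an error of order $s^{d(k)+1}$, with an ordered product of flows $\exp(\pm s X_{i_j})$ each of horizontal length $s$. Consequently, motion by parameter $\delta^{d(k)}$ along $Y_k$ can be achieved at the cost of only order $\delta$ of horizontal travel.

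Concretely, I would fix $x\in K$ and use the H\"ormander condition to select, near $x$, an $n$-tuple $(Y_{k_1},\ldots,Y_{k_n})$ from $\{Y_1,\ldots,Y_p\}$ that forms a basis of $\R^n$. By the implicit function theorem, $\hat d(x,y)$ is then bi-Lipschitz equivalent to the auxiliary ``box'' quasi-norm $\rho(x,y)$ defined as the smallest $\delta$ with $y=\exp(t_n Y_{k_n})\cdots\exp(t_1 Y_{k_1})\,x$ subject to $|t_j|\le \delta^{d(k_j)}$. For each factor $\exp(t_j Y_{k_j})$ one inserts a horizontal approximation synthesized from the commutator identity above; the residual endpoint mismatch is absorbed by a Picard-type fixed point argument, exploiting that each substitution introduces an error one degree higher than the leading term. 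The assembled horizontal curve has total length bounded by $C\delta$ with $C$ depending only on $K$ and the vector fields.

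The main obstacle is obtaining uniform control of the BCH error terms when many commutator substitutions are iterated along a single curve, especially in settings like Example~\ref{esempio32} where the sets $X^{(i)}$ overlap and the choice of basis varies with $x$. Two ingredients make this work: smoothness of $X_1,\ldots,X_m$, which guarantees that the BCH expansion is polynomial modulo a remainder of controlled order when applied to short flows; and the pointwise rank condition on $K$, which makes the implicit function step used to correct endpoint mismatches uniformly non-degenerate with constants depending only on $K$ and the fields. Once the estimate is established for $\hat d(x,y)$ below a small threshold depending on $K$, the general case follows by chaining finitely many short sub-paths and applying the local bound to each.
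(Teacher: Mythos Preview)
The paper does not supply a proof of this statement; it is quoted from \cite[Theorem~4]{NSW} and labeled ``far less trivial'' precisely because the argument is deferred entirely to that reference. So there is no in-paper proof to compare against, only the original Nagel--Stein--Wainger argument.

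Your easy direction $\hat d\le d_0$ is correct. The gap is in the hard direction. Your plan is: (i) pass from $\hat d$ to the product-of-exponentials quasi-norm $\rho$; (ii) replace each factor $\exp(t_j Y_{k_j})$ by a horizontal word via BCH/commutator identities, yielding $d_0\le C\rho$. Step~(ii) is fine---it is the quantitative Chow argument. The problem is step~(i): you claim that $\hat d$ and $\rho$ are bi-Lipschitz equivalent ``by the implicit function theorem.'' The implicit function theorem only tells you that $(t_1,\ldots,t_n)\mapsto \exp(t_nY_{k_n})\cdots\exp(t_1Y_{k_1})(x)$ is a local diffeomorphism, hence that $\rho$ is well defined. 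It does \emph{not} give the anisotropic estimate that $\hat d(x,y)<\delta$ forces the box coordinates of $y$ to satisfy $|t_j|\le C\delta^{d(k_j)}$. That inequality is exactly the Ball--Box theorem, which is the technical heart of \cite[Theorem~4]{NSW}. Without it your chain reads $\hat d\le d_0\le C\rho$ together with the (easy) $\hat d\le C'\rho$, and there is no bound on $\rho$ in terms of $\hat d$ to close the loop.

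For comparison, the NSW proof does not proceed via direct BCH substitution. It instead establishes two-sided estimates on the exponential map $\Phi_x(u)=\exp(\sum u_iY_i)(x)$ and its Jacobian---the result recalled in this survey as Theorem~\ref{MAINNSWeps}---from which the Ball--Box inclusions (and hence $\rho\approx\hat d\approx d_0$) follow. Your BCH route is a legitimate way to obtain $d_0\le C\rho$, but the missing inequality $\rho\le C\hat d$ still requires the NSW exponential-map machinery or an argument of comparable depth; it cannot be extracted from the implicit function theorem alone.
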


\subsection{The approximating distances} \label{DEPS}

There are several possibile definitions for Riemannian distance functions which approximate a Carnot-Caratheodory metric in the Gromov-Hausdroff sense. 

\begin{dfn}\label{prima-def}
Let $\{Y_1,...,Y_p\}$ be a generating family of vector fields constructed as in \eqref{NSWspaces} from a family of H\"ormander vector fields $X_1,...,X_m$. For every $\e>0$ denote by $d_\e(\cdot, \cdot)$   the Carnot-Caratheodory metric  associated to the family of vector fields $(X_1^\e, ...,X_{p}^\e),$ defined as

\begin{equation}\label{definefields}
X_i^\e=\Bigg\{ \begin {array}{lll} Y_i & \text{ if } i\leq m,  \\
\e^{d(i) -1 } Y_i &\text{ if } m+1 \leq i \leq p, \\ Y_{i-p+m} &\text{ if } p+1 \leq i \leq 2p -m
\end{array}.
\end{equation}
We will also define an extension of the degree function, setting $d_\e(i)=1$ for all $i\leq p$, and $d_\e(i) = d(i-p+m)$ if  $i\geq p+1$.  In order to simplify notations we will denote $X=
 X^0$,  $d_0=d$
 and use the same notation for both families of vector fields
 (dependent or independent of $\e$).
\end{dfn}
Note that for every $\e\in (0,\bar \e)$ the sets $\{X_i^\e\}$ extends the original family of vector fields $(X_i)$ to a  new families of vector fields satisfying assumption (I) on page 107 \cite{NSW}, i.e. there exist smooth functions $c_{jk}^l$, depending on $\e$, such that
$$[X^\e_j,X^\e_k] =\sum_{d_\e(l)\leq d_\e(j) + d_\e(k)} c_{jk}^l {X_l^\e}$$
and $$\{X^\e_j\}_{j=1}^{2p-m} \text{ span } \R^n  \text{ at every point }.$$

\begin{rmrk}
Note that the coefficients $c_{jk}^l$ will be unbounded as $\e\to 0$. In principle this could be a problem  as the
doubling constant in the proof in \cite{NSW} depends indirectly from the $C^r$ norm of these functions. In this survey we will describe a result, originally proved in  \cite{CCR}, showing  that this is not the case. \end{rmrk}

 \begin{rmrk} It follows immediately from the definition that  for fixed $x,y\in \Om$ the function $d_\e(x,y)$ is decreasing in $\e$ and for every $\e\in (0,\bar \e)$,
 $$d_0(x,y)\ge d_\e(x,y)$$ \end{rmrk}

\begin{rmrk}\label{subR} Let us consider a special case where $\dim\text{ span }(X_1,...,X_m)$ is constant and the vector fields $X_1,...,X_p$ are chosen to be linearly independent in $\Omega$. In this case we can consider  two positive defined symmetric quadratic forms $g_0,$ and $ \lambda$ defined respectively on the distribution $H(x)=\text{ span }(X_1,...,X_m)(x)$, for $x\in \Om$ and on $H^\perp(x)$. The product metric $g_0\oplus \lambda$ is then a Riemannian metric on all of $T \Omega$.  The form $g_0$ is called a {\it subRiemannian} metric on $\Om$, corresponding to $H$.
 Next, for every $\e\in (0,\bar \e]$  reconsider the rescaled metric $g_\e:=g_0\oplus \e^{-1}\lambda$ and the corresponding Riemannian distance function $d_\e$ in $\Omega$.  The latter is bi-Lipschitz equivalent to the distance $d_\e$ defined above.  In \cite[Theorem 1.1]{Ge} Ge proved that  that as metric spaces, the sequence $(\Om, d_\e)$ converges to $(\Om, d_0)$ as $\e\to 0$ in the sense of Gromov-Hausdorff. In this limit the Hausdorff dimension of the space degenerates from coinciding with the topological dimension, for $\e>0$, to a value $Q>n$ which may change from open set to open set. We will go more in detail about this point in the next section. In this sense the limiting approximation scheme we are using can be described by the collapsing of a family of Riemannian metric to a subRiemannian metric. See also \cite[Theorem 1.2.1]{monti-tesi} for yet another related Riemannian approximation scheme.
 \end{rmrk}

%

\begin{rmrk}From different perspectives, note that the subLaplacian associated to the family $X_1^\e,...,X_m^\e$ i.e. $\mathcal{L}u=\sum_{i=1}^m X_i^{\e,2}u$ is an elliptic operator for all $\e>0$, degenerating to a subelliptic operator for $\e=0$.
\end{rmrk}

\subsection{A special case: The Heisenberg group $\mathbb H^1$} In this section we describe the behavior of the distance $d_\e$ (and of the corresponding metric balls $B_\e(x,r)$ as $\e\to 0$, by looking at the special case of the Heisenber group. In this setting we will also provide an elementary argument showing that the doubling property holds uniformly as $\e\to 0$. 

Consider the vector fields from Example \ref{heisenberg-ex}     $X_1=\p_{x_1}-x_2 \p_{x_3},$  $X_2=\p_{x_2}+x_1\p_{x_3}$ and $X_3=\p_{x_3}$ with  $(x_1,x_2,x_3)\in \R^3$.
The  Carnot-Carath\'eodory distance $d_0$ associated to the subRiemannian metric defined by the orthonormal frame $X_1,X_2$ is equivalent
to a more explicitly defined pseudo-distance function, that we
 call  {\sl gauge distance}, defined as
\begin{equation}\label{gauge}
|x|^{4}=(x_1^2+x_2^2)^2+x_3^2, \text{ and
} \rho(x,y)=|y^{-1}x|,
\end{equation}
where $y^{-1}=(-y_1,-y_2,-y_3)$ and $y^{-1}x=(x_1-y_1,x_2-y_2, x_3-y_3- (y_1x_2-x_1y_2))$ is the {\it Heisenberg group multiplication}.
\begin{lemma} For each $x\in \R^3$,
\begin{equation}\label{stesso} A^{-1}|x|\le d_0(x,0)\le A|x|,\end{equation}
for some constant $A>0$. \end{lemma}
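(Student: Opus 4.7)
The plan is to exploit the Heisenberg dilation structure to reduce the desired two-sided bound to a compactness argument on the gauge unit sphere.

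First I would record two homogeneity identities under the intrinsic dilations $\delta_\lambda(x_1,x_2,x_3) := (\lambda x_1, \lambda x_2, \lambda^2 x_3)$, $\lambda>0$. The gauge is manifestly $1$-homogeneous, $|\delta_\lambda x| = \lambda |x|$, directly from the defining equation $|x|^4 = (x_1^2+x_2^2)^2 + x_3^2$. The CC distance is also $1$-homogeneous, $d_0(\delta_\lambda x, 0) = \lambda \, d_0(x,0)$: a short calculation shows $(\delta_\lambda)_* X_i|_y = \lambda X_i|_{\delta_\lambda y}$ for $i=1,2$, so pushing an admissible horizontal curve from $0$ to $x$ with velocity bound $\delta$ forward by $\delta_\lambda$ produces an admissible horizontal curve from $0$ to $\delta_\lambda x$ with velocity bound $\lambda\delta$, and the converse follows by applying $\delta_{1/\lambda}$. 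By these identities it suffices to prove $A^{-1} \le d_0(x,0) \le A$ on the Euclidean-compact gauge unit sphere $\Sigma := \{|y|=1\}$.

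Next I would establish the upper bound $d_0(x,0) \le C|x|$ by explicit path construction, which at the same time yields continuity of $d_0(\cdot,0)$ at the origin. The construction has two stages. The Euclidean segment $\gamma_1(t)=(tx_1,tx_2,0)$, $t\in[0,1]$, joining $0$ to $(x_1,x_2,0)$ is horizontal, since $\dot\gamma_1(t) = x_1 X_1|_{\gamma_1(t)} + x_2 X_2|_{\gamma_1(t)}$ thanks to the cancellation $-(tx_1)x_2 + (tx_2)x_1 = 0$, and has horizontal length $\sqrt{x_1^2+x_2^2}$. To then pick up the $x_3$ coordinate, concatenate with a closed horizontal loop whose projection to the $(x_1,x_2)$-plane bounds a region of signed area $x_3/2$; applying Stokes' theorem to the contact form $\omega = dx_3 + x_2\,dx_1 - x_1\,dx_2$, whose kernel coincides with the horizontal distribution, one sees that the vertical displacement along such a horizontal lift equals $x_3$. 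Choosing the loop to be a circle of radius $\sqrt{|x_3|/(2\pi)}$, its length is $C|x_3|^{1/2}$, and together the two segments yield $d_0(x,0) \le \sqrt{x_1^2+x_2^2} + C|x_3|^{1/2} \le C'|x|$.

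Finally, the lower bound $d_0(x,0) \ge A^{-1}|x|$ on $\Sigma$ follows from the positivity of $d_0$ (it is a genuine metric by Chow's theorem, so $d_0(x,0) > 0$ for $x \neq 0$) together with its continuity on $\R^3$ with respect to the Euclidean topology and the compactness of $\Sigma$: the minimum of $d_0(\cdot, 0)$ on $\Sigma$ is attained and strictly positive, and the maximum is finite, so both constants in the equivalence are inherited by homogeneity. The main potential obstacle is verifying cleanly the contact-geometric identity relating enclosed planar area to vertical shift, which amounts to combining the horizontality condition $\omega(\dot\gamma) = 0$ with the planar identity $\int_{\partial D}(x_1\,dx_2 - x_2\,dx_1) = 2\,\mathrm{area}(D)$; beyond this, the argument is a soft homogeneity-and-compactness reduction.
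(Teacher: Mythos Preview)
Your proof is correct and shares the paper's core strategy: both exploit the $1$-homogeneity of $|\cdot|$ and of $d_0(\cdot,0)$ under the dilations $\delta_\lambda$ to reduce the two-sided bound to a single scale. The paper's unit-scale step is terser and phrased dually in terms of balls: it simply asserts that the CC unit ball $B(0,1)$ is a bounded open neighborhood of the origin, hence is sandwiched between two gauge balls $\{|x|\le A^{-1}\}$ and $\{|x|\le A\}$, and then applies $\delta_\lambda$. You instead work on the gauge sphere $\Sigma$, supplying an explicit horizontal-path construction (horizontal segment plus area-enclosing loop) for the upper bound and a continuity-plus-compactness argument for the lower bound. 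Your construction is genuine additional content that the paper leaves implicit; conversely, the paper's ball formulation sidesteps having to verify continuity of $d_0(\cdot,0)$ away from the origin, which you invoke for the infimum on $\Sigma$ but only establish at $0$---this is immediately repaired by left-invariance of $d_0$ together with continuity of the group multiplication, or by citing Proposition~\ref{p11}.
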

\begin{proof} Observe that the 1-parameter family of diffeomorpthisms $$(x_1,x_2,x_3)\to \delta_\lambda (x_1,x_2,x_3):=(\lambda x_1, \lambda x_2, \lambda^2 x_3)$$
satisfies 
$|\delta_\lambda (x)|=\lambda |x|$, and $d\delta_\lambda X_i=\lambda X_i\circ \delta_\lambda$ for $i=1,2$. Consequently $d_0(\delta_\lambda(x),\delta_\lambda(y))=\lambda d_0 (x,y)$, and $\delta_\lambda(B(0,1))=B(0,\lambda)$. Since the unit ball $B(0,1)$ is a bounded open neighborhood of the origin, it will contain a set of the form $|x|\le A^{-1}$ and will be contained in a set of the form $|x|\le A$. By applying $\delta_\lambda$ we then have that for any $R>0$, 
$$\{ x\in \R^3| |x|\le A^{-1} R\} \subset B(0,R)\subset \{ x\in \R^3| |x|\le A R\}$$
concluding the proof of \eqref{stesso}.
\end{proof}

\begin{rmrk} Since the Heisenberg group is a Lie group, then it is natural to use a left-invariant volume form to measure the size of sets, namely the Haar measure. It is not difficult to see \cite{CorwinGreenleaf} that the Haar measure coincides with the Lebesgue measure in $\R^3$. It follows immediately from the previous lemma that the corresponding volume of a ball $B(x,r)$ is \begin{equation}\label{volume0} |B(x,r)|=Cr^4.\end{equation}
As a consequence one can show that the Hausdorff dimension of the metric space $(\Hone, d_0)$ is $4$. The Hausdorff dimension of any horizontal curve (i.e. tangent to the distribution generated by $X_1$ and $X_2$) is $1$, while the Hausdorff dimension of the vertical $z$-axis is $2$.
\end{rmrk}

Next we turn our attention to the balls in the metrics $g_\e$ and the associated distance functions $d_\e$. To better describe the approximate shape of such balls we  define the pseudo-distance function $d_{G,\e} (x,y)=N_\e (y^{-1}x)$ corresponding to the  regularized gauge function
\begin{equation}\label{Nepsilon}
N_\e^2(x)=x_1^2+x_2^2+
 \min\bigg\{ |x_3|  , \e^{-2}x_3^2
 \bigg\}.
\end{equation}
Our next goal is to show that  the Riemannian distance function $d_\e$ is well approximated by the gauge  pseudo-distance $d_{G,\e}$. 
\begin{lemma} \label{N=d}
There exists $A>0$ independent of $\e$ such that for all $x,y\in \R^3$
\begin{equation}\label{ball-box}
A^{-1} d_{G,\e}(x,y) \le d_\e (x,y) \le A d_{G,\e}(x,y) .
\end{equation}
\end{lemma}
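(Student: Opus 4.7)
The plan is to exploit the left-invariance of both distances and reduce to the case $y=0$. Since $X_1,X_2,X_3$ are left-invariant for the Heisenberg group law, the frame $\{X_1,X_2,\e X_3\}$ of $g_\e$ is left-invariant and so is $d_\e$, while $d_{G,\e}$ is manifestly left-invariant by construction. It therefore suffices to prove $A^{-1}N_\e(x)\le d_\e(x,0)\le A\,N_\e(x)$ for every $x=(x_1,x_2,x_3)\in\R^3$ and every $\e>0$, with $A$ independent of $\e$. The dichotomy built into $N_\e$ is that for $|x_3|\le \e^2$ the third term is the ``Riemannian'' piece $\e^{-2}x_3^2$, while for $|x_3|\ge\e^2$ it is the ``subRiemannian'' piece $|x_3|$, and the two expressions agree at $|x_3|=\e^2$.

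For the upper bound I would build an admissible curve in two pieces. First, the horizontal straight segment $\gamma(t)=(tx_1,tx_2,0)$ satisfies $\dot\gamma=x_1 X_1+x_2 X_2$, because the $\p_{x_3}$ contributions from $X_1$ and $X_2$ cancel along a straight horizontal line through the origin; its $g_\e$-length is therefore $\sqrt{x_1^2+x_2^2}$ and it ends at $(x_1,x_2,0)$. To reach $(x_1,x_2,x_3)$ I then take whichever of the following two detours is cheaper: travel along $X_3$ itself at cost $|x_3|/\e$ in the $g_\e$ norm, or execute a horizontal square loop whose four sides flow along $\pm X_1,\pm X_2$ of common length $\sqrt{|x_3|/2}$, which by direct integration produces vertical displacement exactly $\pm 2(\sqrt{|x_3|/2})^{2}=\pm|x_3|$ at cost $\lesssim\sqrt{|x_3|}$ (the standard $[X_1,X_2]=2\p_{x_3}$ trick). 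Since $\min\{|x_3|/\e,\sqrt{|x_3|}\}=\sqrt{\min\{|x_3|,\e^{-2}x_3^2\}}$, the total length of the concatenation is $\lesssim N_\e(x)$, giving $d_\e(x,0)\le A\,N_\e(x)$.

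For the lower bound consider any admissible curve $\gamma:[0,1]\to\R^3$ joining $0$ to $x$ with $\dot\gamma=a_1 X_1+a_2 X_2+a_3(\e X_3)$ and $g_\e$-length $L=\int_0^1 (a_1^2+a_2^2+a_3^2)^{1/2}\,dt$. Reading off the system
\begin{equation*}
\dot\gamma_1=a_1,\qquad \dot\gamma_2=a_2,\qquad \dot\gamma_3=-a_1\gamma_2+a_2\gamma_1+\e a_3
\end{equation*}
and applying Cauchy--Schwarz yields $|x_1|,|x_2|\le L$ and $|x_3|\le 2L^2+\e L$. A short dichotomy turns the latter into $\min\{|x_3|,\e^{-2}x_3^2\}\le CL^2$: if $L\le\e$ then $|x_3|\le 3\e L$ so $\e^{-2}x_3^2\le 9L^2$, while if $L\ge\e$ then $\e L\le L^2$ gives $|x_3|\le 3L^2$ directly. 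Combining with $x_1^2+x_2^2\le 2L^2$ gives $N_\e(x)^2\le CL^2$, hence $N_\e(x)\le A\,d_\e(x,0)$. The only delicate step is the explicit verification, by direct integration of the flows, that the horizontal square loop produces exactly the prescribed vertical displacement with length $\sim\sqrt{|x_3|}$ uniformly in $\e$; the rest reduces to elementary one-dimensional integration.
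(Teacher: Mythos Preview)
Your proof is correct. Both you and the paper reduce to $y=0$ by left-invariance and treat the upper bound the same way: a horizontal segment to $(x_1,x_2,0)$ followed by the cheaper of a vertical segment (cost $\e^{-1}|x_3|$) or a purely horizontal detour (cost $\approx\sqrt{|x_3|}$). The only cosmetic difference there is that the paper invokes the gauge equivalence $d_0(\cdot,0)\approx|\cdot|$ from \eqref{stesso} to bound the horizontal detour, whereas you compute the square loop explicitly.

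The lower bound is where the two arguments genuinely diverge. The paper takes a $g_\e$-minimizer $\gamma$, separates it into a horizontal curve $\gamma_1$ (integrating only $a_1X_1+a_2X_2$) with endpoint $P=(x_1,x_2,p_3)$ and a vertical segment $\gamma_2$, and then runs a case analysis on whether $|p_3|\le\tfrac12|x_3|$ or not, repeatedly appealing to the gauge equivalence \eqref{stesso}. Your route is more direct and self-contained: you read off the ODE system, bound $|\gamma_1|,|\gamma_2|\le L$ pointwise, and get $|x_3|\le 2L^2+\e L$ by Cauchy--Schwarz; the dichotomy $L\lessgtr\e$ then converts this into $\min\{|x_3|,\e^{-2}x_3^2\}\lesssim L^2$ without ever invoking the subRiemannian ball-box estimate. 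This buys you a proof that does not depend on the prior lemma \eqref{stesso}, at the cost of being specific to the explicit Heisenberg coordinates (the paper's decomposition argument is closer in spirit to what one would do for a general equiregular structure, as in Theorem~\ref{epsilon-equiv}).
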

%
%
The estimate \eqref{ball-box} yields immediately
\begin{cor} The doubling property holds uniformly in $\e>0$.
\end{cor}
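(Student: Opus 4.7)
The plan is to deduce the corollary as a straightforward corollary of Lemma \ref{N=d} by reducing to an explicit volume computation for the gauge pseudo-balls, which can be carried out in closed form and checked regime-by-regime.

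First, I would observe that by Lemma \ref{N=d} the Riemannian balls $B_\e(x,r)$ are sandwiched between gauge pseudo-balls $B_{G,\e}(x,A^{-1}r)$ and $B_{G,\e}(x,Ar)$ with $A$ independent of $\e$, so doubling in $d_\e$ (with an $\e$-independent constant) follows from doubling in $d_{G,\e}$. Moreover, $N_\e$ is left-invariant under the Heisenberg group operation and Lebesgue measure is the Haar measure, so $|B_{G,\e}(x,r)|=|B_{G,\e}(0,r)|$ and the problem reduces to computing $V_\e(r):=|\{x\in\R^3:N_\e(x)\le r\}|$.

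Next I would compute $V_\e(r)$ by slicing in $x_3$. For fixed $x_3$, the set $N_\e(x)\le r$ is the disk $x_1^2+x_2^2\le r^2-\min\{|x_3|,\e^{-2}x_3^2\}$ whenever the right-hand side is non-negative. Splitting the integration at $|x_3|=\e^2$ (where $|x_3|$ and $\e^{-2}x_3^2$ cross) produces two regimes:
\begin{equation*}
V_\e(r)=\tfrac{4\pi}{3}\,\e\, r^{3}\quad\text{if }0<r\le\e,\qquad V_\e(r)=\pi\,r^{4}+\tfrac{\pi}{3}\e^{4}\quad\text{if }r\ge\e,
\end{equation*}
and the two expressions match continuously at $r=\e$, both equal to $\tfrac{4\pi}{3}\e^{4}$.

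Doubling is then verified regime-by-regime: if $2r\le\e$ the ratio $V_\e(2r)/V_\e(r)$ equals $8$; if $r\ge\e$ the ratio is bounded by $16$; and in the transition range $\e/2\le r\le\e$ one uses $\e\le 2r$ to bound the numerator by $\tfrac{64\pi r^{4}}{3}$ and $\e\ge r$ to bound the denominator below by $\tfrac{4\pi r^{4}}{3}$, again giving $16$. In all cases the constant is independent of $\e$, which is what we need. The main thing to watch is precisely this transition window $r\sim\e$, where neither asymptotic formula dominates by itself; the fact that the two regime formulas match continuously at $r=\e$ makes the bookkeeping harmless, and no further obstacle arises.
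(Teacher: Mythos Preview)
Your proof is correct and follows the same route as the paper: the corollary is obtained as an immediate consequence of the ball-box estimate \eqref{ball-box} (Lemma \ref{N=d}), and the paper in fact offers no further argument beyond the words ``yields immediately.'' You have simply supplied the explicit volume computation for the gauge pseudo-balls and the regime-by-regime doubling check; the calculations are accurate, including the matching at $r=\e$ and the handling of the transition window $\e/2\le r\le\e$.
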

\begin{rmrk} Before proving \eqref{ball-box} it is useful to examine a specific example:  compare two trajectories from the origin $0=(0,0,0)$ to the point $x=(0,0,x_3)$. The first is the segment $\gamma_1$ defined by
$s\to (0,0,x_3 s)$, for $s\in [0,1]$. The length of this segment in the Riemannian metric $g^\e$ given by the orthonormal frame $X_1,X_2,\e X_3$ is $$\ell_\e(\gamma_1)=\e^{-1}|x_3|.$$ We also consider a  second trajectory $\gamma_2$ given by the subRiemannian geodesic between the two points. In view of \eqref{stesso} the length of this curve in the subRiemannian metric $g^0$ defined by the orthonormal frame $X_1,X_2$ is proportional to  $\sqrt{|x_3|}$ and coincides with the length in the Riemannian metric $g^\e$, i.e.
$$\ell_\e(\gamma_2)=\ell_0(\gamma_2)\approx \sqrt{|x_3|}.$$ Since $d_\e$ is computed by selecting the shortest path between two points in the $g^\e$ metric, then if $\sqrt{|x_3|}>\e$ one will have $d_\e(x,0) \le \sqrt{|x_3|} \approx N_\e(x)$, whereas at {\it small scales } (i.e. for $d_0(x,0)<\e$)  one will have $d_\e(x,0)\le \e^{-1}|x_3|$. By left translation invariance of $d_{G,\e}$ we have  that for any two points $x=(x_1,x_2,s)$ and $x'=(x_1,x_2,t)$, \begin{equation}\label{above}
d_\e(x,x')\le C \min ( \e^{-1}|t-s|, \sqrt{|t-s|}).
\end{equation}

From this simple example one can expect  that at large scale (i.e. for points $d^0(x,0)>\e$) the Riemannian and the subRiemannian distances are approximately the same $d_\e(x,0)\approx d_0(x,0)$.
%
\end{rmrk}
\begin{proof} From the invariance by left translations of both $d_{G,\e}$ and $d_\e$ it is sufficient to prove that $d_\e(x,0)$ and $N_\e(x)$ are equivalent. We begin  by establishing the first inequality in \eqref{ball-box}, i.e. we want to show that there exists a positive constant $A$ such that $$A^{-1}N_\e(x)\le d_\e(0,x).$$ Consider a point $x=(x_1,x_2,x_3)\in \R^3$ and three curves
\begin{itemize} 
\item A length minimizing curve $\gamma:[0,1]\to \R^3$ for the metric $g_\e$, such that $$d_\e(0,x)=\ell_\e(\gamma):=\int_0^1 \sqrt{ a_1^2 (t) + a_2^2 (t)+\e^{-2}a_3^2(t)} dt,$$ where $\gamma'(t)=\sum_{i=1,3} a_i(t)X_i|_{\gamma(t)}.$
\item An horizontal curve $\gamma_1:[0,1]\to \R^3$ with one end-point at the origin ($t=0$) and such that $\gamma_1'(t)=a_1(t) X_1|_{\gamma(t)}+a_2(t) X_2|_{\gamma(t)}$. Denote by $P=\gamma_1(1)$ and observe that $P=(x_1,x_2,p_3)$ for some value of $p_3$ such that $\int_0^1 a_3(t) dt=x_3-p_3$.
\item A vertical segment $\gamma_2:[0,1]\to \R^3$ with endpoints $P$ and $x$, such that $\gamma_2'(t)=a_3(t)X_3|_{\gamma_2(t)}$. Note that $$\e^{-1}|x_3-p_3|\le \bigg| \e^{-1} \int_0^1 a_3(t) dt\bigg| \le\int_0^1 |a_3(t)|\e^{-1}dt= \ell_\e(\gamma_2)\le \ell_\e(\gamma)\le d_\e(x,p).$$
\end{itemize}
Observe that in view of the equivalence  \eqref{stesso}, $$C^{-1}\sqrt{x_1^2+x_2^2} \le d_0(P,0)\le \ell_0(\gamma_1)=\ell_\e(\gamma_1)\le \ell_\e (\gamma),$$ for some constant $C>0$. On the other hand one also has $$\e^{-1} |x_3-p_3| \le d_\e(x,p)\le d_\e(x,0)+d_\e(0,p)\le  d_\e(x,0)+ \ell_\e(\gamma_1)\le 2d_\e(0,x).$$ Hence if $|p_3|\le \frac{1}{2} |x_3|$ then $|x_3-p_3|\ge \frac{1}{2} |x_3|$ and consequently
$$d_\e(x,0) = \ell_\e (\gamma) \ge \e^{-1} |x_3-p_3| \ge \min (\e^{-1}|x_3|, \sqrt{|x_3|}).$$
The latter yields immediately that $d_\e(x,0)\ge C^{-1} N_\e(x),$ for some value of $C>0$ independent of $\e>0$.  Next we consider the case $|p_3|>\frac{1}{2}|x_3|$. This yields
$$\min (\e^{-1}|x_3|, \sqrt{|x_3|}) \le \frac{1}{2} \min (\e^{-1}|p_3|, \sqrt{|p_3|})\le \sqrt{|p_3|}\le |P|\le Cd_0(P,0)$$
$$ \le C \ell_0(\gamma_1)=C \ell_\e (\gamma_1) \le C \ell_\e (\gamma)=C d_\e (x,0),$$ where $|P|$ is defined as in \eqref{stesso}. In summary, so far we have proved the first half of \eqref{ball-box}.

To prove the second half of the inequality we consider an horizontal segment $\Gamma_1$ joining the origin to $Q=(x_1,x_2, 0)$. Note that $d_0(0,Q)=d_\e(0,Q)=\ell_0(\Gamma_1)=\ell_\e(\Gamma_1)$. In view of \eqref{above} one has
$$d_\e(0,x)\le d_\e(0,Q)+d_\e(Q,x) = d_0(0,Q)+C\min (\e^{1} |x_3| , \sqrt{|x_3|}) \le CN_\e(x).$$ The latter completes  the proof of \eqref{ball-box}.
\end{proof}
\begin{rmrk} Similar arguments continue to hold more in general, in the setting of Carnot groups. \end{rmrk}

As a consequence of Lemma \ref{N=d}, one has that  for $\e>0$ the metric space $(\R^3, d_\e)$ is locally bi-Lipschitz to the Euclidean space, and hence its Hausdorff dimension will be $3$. As $\e\to 0$ the non-horizontal directions are {\it penalized } causing a sharp {\it phase transition } between the regime at $\e>0$ and $\e=0$.

The intuition developed through this example hints at the multiple scale aspect of the $d_\e$ metrics: At scales smaller than $\e>0$ the local geometry of the metric space
$(\R^3,d_\e)$ is roughly Euclidean; For scales larger than $\e>0$ it is subRiemannian. This intuition will inform the proofs of the stability for the doubling property in the next section.

\section{Stability of the homogenous structure}

The volume of Carnot-Caratheodory balls, and its doubling property, has been studied in Nagel, Stein and Wainger's seminal work \cite{NSW}. In this section we recall the main results in this paper and show how to modify their proof so that the stability of the doubling constant as $\e\to 0$ becomes evident.

\subsection{The Nagel-Stein-Wainger estimates}
Consider the Carnot-Caratheodory metric $d_\e(\cdot, \cdot)$  associated to the family of vector fields $(X_1^\e, ...,X_{p}^\e),$ defined in \eqref{definefields}. Denote by  $B_e(x,r)=\{y|d_\e(x,y)<r\}$   the corresponding metric balls.

For every $n-$tuple $I=(i_1,...,i_n)\in \{1,...,2p-m\}^n$, and for  $\bar \e \geq \e\geq 0$  define the coefficient $$\lambda^\e_I(x)=\det (X^\e_{i_1}(x),...,X^\e_{i_n}(x)).$$
For a fixed $0\le \e \le \bar \e$ and for a fixed constant $0<C_{2,\e}<1$, choose $I_\e=(i_{\e 1},...,i_{\e n})$ such that
\begin{equation}\label{bestI}|\lambda^\e_{I_\e}(x)|r^{d_\e(I_\e)} \ge C_{2,\e} max_J |\lambda^\e_J(x)|r^{d_\e(J)},\end{equation}
where the maximum ranges over all $n-$tuples.
Denote $J_\e$ the family of remaining indices, so that
$\{X^\e_{i_{\e, j}}: i_{\e, j} \in I_\e\} \cup \{X^\e_{i_{\e, k}}: i_{\e ,k} \in J_\e\}$ is the complete list $X_1^\e,...,X_{2p-m}^\e$.
When $\e=0$ we will refer to $I_0$ { as a choice corresponding to the $n-$tuple $X^0_{i_{01}},...,X^0_{i_{0n}}$ realizing
\eqref{bestI}.}
One of the main contributions in  Nagel, Stein and Wainger's seminal work \cite{NSW}, consists in the proof that
for a  $v$ and  a $x$  fixed, and letting
$$Q_\e( r)=\{u\in \R^n: |u_j| \leq  r^{d_\e( i_{\e j})}\}$$denote  a {\it weighted} cube in $\R^n$, then the quantity 
 $|\lambda^\e_{I_\e}(x)|$ provides an
estimates of the Jacobian of the exponential mapping
$u\to \Phi_{\e, v, x}(u)$ defined for   $u\in Q(r)$ as \begin{equation}\label{phi}
\Phi_{\e, v, x}(u) = exp\Big(\sum_{i_{\e, j}\in I_\e} u_j X^\e_{i_{\e, j}} + \sum_{i_{\e, k}\in J_\e} v_k X^\e_{i_{\e, k}} \Big)(x).\end{equation}

\bigskip

More precisely, for $\e\geq 0$ and fixed one has
\begin{thrm}\label{MAINNSWeps}\cite[Theorem 7]{NSW} 

 For every $\e\geq 0$,  and  $K\subset \subset \R^n$ there exist $R_\e>0$ and constants $0<C_{1, \e}, C_{2, \e} <1$  such that for every $x\in K$ and  $0<r<R_\e$, if $I_\e$ is such that \eqref{bestI}
holds, then
\begin{itemize}
\item[i)] if $|v_k| \leq C_{2 \e}r^{d(i_{\e k})}$,
$\Phi_{\e, v, x } $ is one to one on the box $Q_\e(C_{1, \e} r)$
\item[ii)] if $|v_k| \leq C_{2 \e}r^{d(i_{\e k})}$
the Jacobian matrix of $\Phi_{\e, v, x}$ satisfies on the cube $Q_\e(C_{1, \e} r)$
$$\frac{1}{4} |\lambda^\e_{I_\e} (x)| \leq |J\Phi_{\e, v, x}| \leq 4 |\lambda^\e_{I_\e} (x)| $$
\item[iii)]
$$\Phi_{\e, v, x}(Q_\e (C_{1, \e} r))  \subset B_{\e}(x,   r) \subset \Phi_{\e, v, x}(Q_\e(C_{1, \e} r/C_{2, \e})) $$
\end{itemize}
\end{thrm}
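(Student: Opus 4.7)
My plan is to reduce the theorem to the original Nagel-Stein-Wainger result \cite[Theorem 7]{NSW}, applied for each fixed $\epsilon \in [0,\bar\epsilon]$ to the enlarged family $\{X_1^\epsilon, \ldots, X_{2p-m}^\epsilon\}$. As observed in the remark following Definition \ref{prima-def}, this family spans $\R^n$ at every point and satisfies the commutator relations
$$[X_j^\epsilon, X_k^\epsilon] = \sum_{d_\epsilon(l)\leq d_\epsilon(j)+d_\epsilon(k)} c_{jk}^l X_l^\epsilon,$$
with smooth coefficients, which is precisely NSW's assumption (I) on p.\ 107, the extended degree function $d_\epsilon$ playing the role of their weights. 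Hence for each fixed $\epsilon$ all three conclusions follow verbatim from their argument, with constants $R_\epsilon, C_{1,\epsilon}, C_{2,\epsilon}$ that are allowed to depend on $\epsilon$; the question of stability as $\epsilon\to 0$ is the content of Theorem \ref{Main-1}.

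The heart of the NSW proof, which I would reproduce in the present setting, works as follows. Differentiating $\Phi_{\epsilon,v,x}$ in the free coordinates $u$ at $u=0$ yields $\lambda^\epsilon_{I_\epsilon}(x)$ as Jacobian determinant. To control $J\Phi_{\epsilon,v,x}$ throughout the box $Q_\epsilon(C_{1,\epsilon}r)$, one expresses $\partial_{u_j}\Phi_{\epsilon,v,x}(u)$ as $X^\epsilon_{i_{\epsilon,j}}$ evaluated at $\Phi_{\epsilon,v,x}(u)$, modulo commutator corrections arising from the Baker-Campbell-Hausdorff formula. Each such correction is a commutator weighted by a product of coordinates $u_j,v_k$; because those coordinates are bounded by $r^{d_\epsilon(i_{\epsilon,j})}$ and $r^{d_\epsilon(i_{\epsilon,k})}$ respectively, and because of the maximality property \eqref{bestI} of the chosen multi-index $I_\epsilon$, every correction is dominated by a small multiple of $|\lambda^\epsilon_{I_\epsilon}(x)|$ when $r$ is small and $C_{1,\epsilon}$ is appropriately chosen. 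This simultaneously gives the Jacobian bound (ii) and, by the inverse function theorem applied on a possibly shrunken cube, the injectivity (i).

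The ball-box containment (iii) splits into two halves. The easy inclusion $\Phi_{\epsilon,v,x}(Q_\epsilon(C_{1,\epsilon}r))\subset B_\epsilon(x,r)$ follows because a point in the image is reached from $x$ by a broken exponential whose $d_\epsilon$-length is controlled by the coordinates $u_j,v_k$ weighted by $r^{d_\epsilon(\cdot)}$; the equivalence of $d_\epsilon$ with the auxiliary distance $\hat d_\epsilon$ associated to the extended frame (Proposition \ref{p11}) converts this into a bound of order $r$. The reverse inclusion $B_\epsilon(x,r)\subset \Phi_{\epsilon,v,x}(Q_\epsilon(C_{1,\epsilon}r/C_{2,\epsilon}))$ is the delicate point: given $y\in B_\epsilon(x,r)$ one fixes a nearly length-minimizing horizontal curve joining $x$ to $y$, discretizes it into short exponential segments, and solves $\Phi_{\epsilon,v,x}(u)=y$ by an iterative scheme that uses the Jacobian lower bound from (ii) together with the maximality of $I_\epsilon$ to absorb the error terms at each step.

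The main obstacle in carrying this out as a genuine proof (rather than as a quotation) is to verify that no step in NSW implicitly uses regularity of the frame that fails for the extended family $\{X_i^\epsilon\}$. Since the structure constants $c_{jk}^l$ depend smoothly on $x$ for each fixed $\epsilon$, this verification is straightforward, but the same coefficients blow up as $\epsilon\to 0$, so the constants $R_\epsilon, C_{1,\epsilon}, C_{2,\epsilon}$ produced by the argument a priori degenerate; recovering uniform constants requires the careful analysis in Theorem \ref{Main-1}. For the present $\epsilon$-by-$\epsilon$ statement the direct appeal to NSW suffices.
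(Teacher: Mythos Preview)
Your proposal is correct and matches the paper's approach: Theorem \ref{MAINNSWeps} is not proved in the paper but simply quoted from \cite[Theorem 7]{NSW}, applied for each fixed $\epsilon$ to the enlarged family $\{X_1^\epsilon,\ldots,X_{2p-m}^\epsilon\}$, which satisfies NSW's assumption (I) as noted after Definition \ref{prima-def}. You in fact go further than the paper by sketching the mechanics of the NSW argument, and you correctly separate the $\epsilon$-by-$\epsilon$ statement (this theorem) from the uniformity in $\epsilon$ (Proposition \ref{indipendenza} / Theorem \ref{Main-1}).
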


\bigskip

As a corollary one has that   the volume of a Carnot-Caratheodory ball centered in $x$ can be estimated by the measure of the corresponding  cube and the Jacobian determinant of $\Phi_{\e,v,x}$.  

\begin{cor}\label{balmeasure}(\cite[Theorem 1]{NSW})  For every $\e\geq 0$,  and  $K\subset \subset \R^n$ and for $R_\e>0$ as in Theorem \ref{MAINNSWeps}, there exist  constants $C_{3 \e}, C_{4\e}>0$  depending on $K, R_\e, C_{1,\e}$ and $C_{2\e}$ such that for all $x\in K$ and  $0<r<R_\e$ one has
\begin{equation}\label{nsw}
C_{3 \e}\sum_I |\lambda^\e_I(x)| r^{d(I)} \le |B_\e(x,r)|\le C_{4 \e} \sum_I |\lambda^\e_I(x)| r^{d(I)},
\end{equation}
\end{cor}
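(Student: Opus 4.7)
\medskip

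\noindent\textbf{Proof proposal.} The plan is to deduce the corollary directly from Theorem \ref{MAINNSWeps} by squeezing the ball $B_\e(x,r)$ between two images of weighted boxes under the exponential map $\Phi_{\e,v,x}$, and then computing the volumes of these images via the Jacobian estimate. First I would fix $x\in K$, $0<r<R_\e$, and choose an $n$-tuple $I_\e$ realizing \eqref{bestI} with constant $C_{2,\e}$. Setting $v=0$, part iii) of Theorem \ref{MAINNSWeps} yields the inclusions
\begin{equation*}
\Phi_{\e,0,x}\bigl(Q_\e(C_{1,\e}r)\bigr)\subset B_\e(x,r)\subset \Phi_{\e,0,x}\bigl(Q_\e(C_{1,\e}r/C_{2,\e})\bigr),
\end{equation*}
so taking Lebesgue measures and using the change of variables formula justified by the injectivity in i) and the Jacobian bound in ii) gives
\begin{equation*}
\tfrac14 |\lambda^\e_{I_\e}(x)|\,|Q_\e(C_{1,\e}r)|\le |B_\e(x,r)|\le 4|\lambda^\e_{I_\e}(x)|\,|Q_\e(C_{1,\e}r/C_{2,\e})|.
\end{equation*}

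Next I would compute the Lebesgue measure of the weighted cube explicitly: by definition $Q_\e(s)=\prod_{j=1}^{n}[-s^{d_\e(i_{\e,j})},s^{d_\e(i_{\e,j})}]$, so
\begin{equation*}
|Q_\e(s)|=2^{n}\prod_{j=1}^{n}s^{d_\e(i_{\e,j})}=2^{n}s^{d(I_\e)},
\end{equation*}
where $d(I_\e)=\sum_{j}d_\e(i_{\e,j})$. Substituting $s=C_{1,\e}r$ and $s=C_{1,\e}r/C_{2,\e}$ into the sandwich and absorbing all numerical factors into constants $C_{3,\e}',C_{4,\e}'>0$ depending only on $C_{1,\e},C_{2,\e}$ and $n$, one obtains
\begin{equation*}
C_{3,\e}'|\lambda^\e_{I_\e}(x)|\,r^{d(I_\e)}\le |B_\e(x,r)|\le C_{4,\e}'|\lambda^\e_{I_\e}(x)|\,r^{d(I_\e)}.
\end{equation*}

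To replace the single term $|\lambda^\e_{I_\e}(x)|r^{d(I_\e)}$ by the full sum $\sum_I |\lambda^\e_I(x)|r^{d(I)}$, I would invoke the maximality property \eqref{bestI}: by definition of $I_\e$ one has $|\lambda^\e_J(x)|r^{d(J)}\le C_{2,\e}^{-1}|\lambda^\e_{I_\e}(x)|r^{d(I_\e)}$ for every $n$-tuple $J$. Summing over the finitely many $n$-tuples $J\subset\{1,\dots,2p-m\}^n$, whose number $N=N(n,p,m)$ depends only on structural data, yields
\begin{equation*}
|\lambda^\e_{I_\e}(x)|r^{d(I_\e)}\le \sum_I |\lambda^\e_I(x)|r^{d(I)}\le N\,C_{2,\e}^{-1}|\lambda^\e_{I_\e}(x)|r^{d(I_\e)},
\end{equation*}
and combining this two-sided bound with the previous display gives \eqref{nsw} with $C_{3,\e}=C_{3,\e}'(NC_{2,\e}^{-1})^{-1}$ and $C_{4,\e}=C_{4,\e}'$.

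The main conceptual point (rather than a true obstacle) is that at this stage of the argument $\e\ge 0$ is held \emph{fixed}, so all constants $C_{1,\e},C_{2,\e},C_{3,\e},C_{4,\e}$ and the radius $R_\e$ are allowed to depend on $\e$; the delicate issue of uniformity in $\e$, which is the whole point of Theorem \ref{Main-1}, belongs to the subsequent refinement of the Nagel--Stein--Wainger scheme and is not needed here. The only genuine care in the proof above lies in verifying that the change of variables formula applies on the whole box $Q_\e(C_{1,\e}r/C_{2,\e})$, which is precisely the content of parts i) and ii) of Theorem \ref{MAINNSWeps}, and in observing that the exponent $d(I_\e)$ appearing in the weighted cube volume coincides with the exponent appearing in the maximality condition \eqref{bestI}, so that the two displays indeed chain together.
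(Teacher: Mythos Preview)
Your argument is correct and is precisely the standard Nagel--Stein--Wainger derivation that the paper is invoking (the paper does not spell out its own proof but simply cites \cite[Theorem~1]{NSW} and presents the statement as a direct consequence of Theorem~\ref{MAINNSWeps}). The only small point worth noting is that parts i) and ii) of Theorem~\ref{MAINNSWeps}, as stated, give injectivity and the Jacobian bound on $Q_\e(C_{1,\e}r)$ rather than on the larger box $Q_\e(C_{1,\e}r/C_{2,\e})$; to justify the upper bound one applies the theorem at scale $r'=r/C_{2,\e}$ (after possibly shrinking $R_\e$ by the fixed factor $C_{2,\e}$), which you essentially acknowledge in your closing remark.
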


Estimates \eqref{nsw} in turn implies the doubling condition \eqref{D} with constants depending eventually on $R_\e, C_{1\e}$ and $C_{2\e}$.

\subsection{Uniform estimates as $\e\to 0$} Having already proved the stability of the doubling property in the special case of the Heisenberg group, in this section we turn to the general case of H\"ormander's vector fields and describe in some details results from \cite{CCR} establishing that  the constants $C_{1\e}$ $C_{2\e}$  do not vanish as $\e\to 0$.  Without loss of generality one may assume that both constants are non-decreasing in $\e$. In fact, if that is not the case 
one may consider   a new pair of constants $\tilde C_{i,\e}=\inf_{s\in [\e,\bar \e]} C_{i,s}$, for $i=1,2$.

\begin{prop}\label{indipendenza}
For every $\e\in[0,\bar \e]$, the constants $R_\e, C_{1, \e}$ and $C_{2, \e}$ in Theorem \ref{MAINNSWeps} may be chosen to be  independent of $\e$, depending only on  the $C^{r+1}$ norm of the vector fields, on the number  $\bar \e$, and on the compact $K$  .
\end{prop}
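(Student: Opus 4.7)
The strategy is to revisit the proof of Theorem \ref{MAINNSWeps} in \cite{NSW} and track the $\e$-dependence of each constant, showing that they are controlled uniformly in $\e\in[0,\bar\e]$ by the $C^{r+1}(K)$ norms of the original H\"ormander frame $Y_1,\ldots,Y_p$, by $\bar\e$, and by $K$. The key preliminary remark is that, although the Lie algebra structure constants $c_{jk}^l$ may blow up like $\e^{1-d(l)}$ as $\e\to 0$, the vector fields $X_i^\e$ themselves are uniformly bounded in $C^{r+1}(K)$: one has $\|X_i^\e\|_{C^{r+1}(K)}\le \max(1,\bar\e^{r-1})\|Y_i\|_{C^{r+1}(K)}$ for $i\le p$, and $\|X_i^\e\|_{C^{r+1}(K)}=\|Y_{i-p+m}\|_{C^{r+1}(K)}$ for $p+1\le i\le 2p-m$. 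Consequently, any bound in the NSW proof that can be phrased in terms of $C^{r+1}$ norms of the generating frame is automatically uniform in $\e$.

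With this observation, the existence and smoothness of the exponential map $\Phi_{\e,v,x}$, as well as its Jacobian estimate (part (ii) of Theorem \ref{MAINNSWeps}), follow from standard ODE arguments. For an appropriate $R>0$ depending only on $\|Y_i\|_{C^1(K)}$ and $K$, Picard--Lindel\"of gives well-posedness of the flow on $[0,1]$ whenever $|u_j|\le r^{d_\e(i_{\e,j})}\le R$ and $|v_k|\le r^{d_\e(i_{\e,k})}\le R$. The identity $\p_{u_j}\Phi_{\e,v,x}(u)\big|_{u=0} = X_{i_{\e,j}}^\e(x)$, combined with a Taylor expansion whose remainder is controlled by the $C^2$ norm of the $X_i^\e$'s, reduces the Jacobian bound to an estimate of the form $|J\Phi_{\e,v,x}(u)-\lambda_{I_\e}^\e(x)|\le C\,r\,\max_J|\lambda_J^\e(x)|\,r^{d_\e(J)-d_\e(I_\e)}$ with $C$ uniform in $\e$; the optimality \eqref{bestI} then forces the right-hand side below $\tfrac34|\lambda_{I_\e}^\e(x)|$ once $r$ is smaller than a constant depending only on $C$ and $C_{2,\e}$, giving part (ii) with $R$ and $C_1$ independent of $\e$. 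Part (i) is then a standard application of the inverse function theorem.

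The delicate point, and the heart of the proof as presented in \cite{CCR}, is the reverse inclusion $B_\e(x,r)\subset\Phi_{\e,v,x}(Q_\e(C_1 r/C_2))$ in part (iii). To establish that every point $y\in B_\e(x,r)$ can be reached by $\Phi_{\e,v,x}$ with parameters lying in the prescribed weighted cube, \cite{NSW} performs a Baker--Campbell--Hausdorff rearrangement of the defining sub-unit path connecting $x$ to $y$. Naively, the BCH remainders involve the structure constants $c_{jk}^l$, which diverge as $\e\to 0$; this is the only place where $\e$-dependence threatens to enter nontrivially, and it is the main obstacle. The remedy is to rewrite these remainders directly in terms of iterated Lie derivatives $X_{i_1}^\e\cdots X_{i_k}^\e X_j^\e$ of the frame, which by the preliminary remark are uniformly bounded in $\e$. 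The built-in redundancy of the extended family---each higher-degree $Y_l$ appears both as the scaled copy $X_l^\e=\e^{d(l)-1}Y_l$ (carrying weight $r$ in the cube $Q_\e(r)$) and as the unscaled copy $X_{p+l-m}^\e=Y_l$ (carrying weight $r^{d(l)}$)---is essential: the optimal choice of $I_\e$ in \eqref{bestI} automatically picks whichever copy is more efficient at the given scale $r$, absorbing the potentially singular factor $\e^{1-d(l)}$ into the weight $r^{d(l)}$ of the corresponding direction. Once this reorganization is carried out carefully, all resulting constants depend only on $\|Y_i\|_{C^{r+1}(K)}$, on $\bar\e$, and on $K$, yielding the claim.
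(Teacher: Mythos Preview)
Your proposal takes a genuinely different route from the paper, and while the intuition is sound, the execution of the crucial step is too sketchy to stand as a proof.

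\textbf{What the paper does.} The paper never reopens the NSW proof. Instead it uses Theorem~\ref{MAINNSWeps} as a black box at the two endpoints $\e=0$ and $\e=\bar\e$, and for every intermediate $\e$ reduces to one of these by a two-regime splitting. When $\e<r<R_0$, the maximality condition \eqref{bestI} can be satisfied by the \emph{same} index set $I_0$ as for $\e=0$, and one writes $\Phi_{\e,v,x}(u)=\Phi_{0,F_{2,\e}(v),x}(F_{1,\e,v}(u))$ for explicit affine, volume-preserving maps $F_{1,\e,v},F_{2,\e}$; parts (i)--(iii) then follow from the corresponding statements at $\e=0$ with constants $C_{1,0},C_{2,0}$. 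When $r<\e<\bar\e$, the maximizing $I_\e$ consists of the scaled copies $\e^{d(i)-1}Y_i$ and coincides with $I_{\bar\e}$; a rescaling $\e^{d-1}/\bar\e^{\,d-1}$ reduces $\Phi_{\e,v,x}$ to $\Phi_{\bar\e,\cdot,x}$, and one inherits the constants $C_{1,\bar\e},C_{2,\bar\e}$. No structure constants ever appear.

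\textbf{Where your argument falls short.} Your treatment of (i)--(ii) via ODE bounds and Taylor expansion is plausible. The gap is in (iii). You correctly identify that the NSW reverse inclusion relies on BCH-type rearrangements, and that the structure constants $c_{jk}^l\sim\e^{1-d(l)}$ blow up. Your proposed remedy---``rewrite these remainders directly in terms of iterated Lie derivatives $X_{i_1}^\e\cdots X_{i_k}^\e X_j^\e$''---does not by itself resolve the issue: the NSW argument needs to express commutators $[X_i^\e,X_j^\e]$ back in the frame $\{X_l^\e\}$, and that step \emph{is} the structure-constant expansion. Saying that the redundancy of the doubled frame ``automatically picks whichever copy is more efficient'' and ``absorbs the singular factor'' is exactly the mechanism the paper makes rigorous via the $\e\lessgtr r$ dichotomy, but you have only asserted it, not carried it out. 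Without an explicit bookkeeping of which copy of $Y_l$ receives each commutator term and why the resulting weight stays below $Cr^{d_\e(\cdot)}$, this is not a proof.

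\textbf{Comparison.} Your approach, if completed, would amount to a uniform-in-$\e$ reproof of NSW; the paper's approach is more economical because it leverages the already-known endpoint cases and only needs elementary linear-algebra reductions. On the other hand, a fully worked-out version of your strategy would be more self-contained and might generalize to settings where no convenient ``endpoint'' structure is available.
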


\begin{proof} The proof is split in two cases: First we study the range $\e<r<R_0$ which  roughly corresponds
to the balls of radius $r$ having a sub-Riemannian shape. In this range we show that one can select
the constants $C_{i,\e}$ to be approximately $C_{i,0}$. The second case consists in the analysis of the range
$r<\e<\bar \e$. In this regime the balls are roughly of  Euclidean shape and we show that the constants
$C_{i,\e}$ can  be approximately chosen to be $C_{i,\bar \e}$.

 Let us fix  $\e\in(0,\bar \e]$, $R=R_0$ and $r<R_0$.
We can start by describing the family $I_\e$ defined in  (\ref{bestI}), which maximize $\lambda^\e_I(x)$.
We first note that for every $\e>0$ and for every $i$, $m+1 \leq i \leq p$
we have
  \begin{equation}\label{Yer}
 X^\e_{i}r^{d_\e(i)} = \e^{d(i)-1} r Y_i, \quad X^\e_{i+p-m}r^{d_\e(i+p-m)} =  r^{d(i)} Y_i.
  \end{equation}

In the range $0<r<\e<\bar \e$ one can assume without loss  of generality  that the $n-$tuple  satisfying the maximality condition \eqref{bestI} will include only vectors of the form $\{\e^{d(i_{\e 1})-1}Y_{i_{\e 1}}, ...,
\e^{d(i_{\e n})-1}Y_{i_{\e n}}\} $ for some $n-$index $I_\e=(i_{\e 1},...,i_{\e n})$, with $1\le i_{\e k} \le p$. In fact, if this were not the case and the $n-$tuple were to include  a vector of the form $X^\e_j=Y_{j-p+m}$ for some $p<j$, then we could substitute such vector with $X^\e_{j-p+m}=Y_{j-p+m}\e^{d(j-p+m)-1}$ and from \eqref{Yer} infer that  the value of the corresponding term $|\lambda^\e_{I_\e}(x)| r^{d_\e(I_\e)}$ would increase.

Similarly, in the range $0<\e<r<\bar \e$ one can assume that the  $n-$tuple  satisfying the maximality condition \eqref{bestI} will include only vectors of the form $\{Y_{i_{\e 1}}, ...,
Y_{i_{\e n}} \}$ for some $n-$index $I_\e=(i_{\e 1},...,i_{\e n})$, with $1\le i_{\e k} \le p$. Note that the corresponding expression $$|\lambda^\e_{I_\e}(x)|r^{d_\e(I_\e)-1}=|\det (Y_{i_{\e 1}}, ...,
Y_{i_{\e n}})(x)| r^{\sum_{I_\e} d(i_{\e k})}$$ would then be one of the terms in the  left hand side of 
 \eqref{bestI} for $\e=0$, and thus is maximized by $C_{2,0}^{-1}|\lambda_{I_0}^0(x)|r^{d(I_0)-1}$.

Case 1: In view of the argument above, for every  $\e<r<R_0$ the indices $I_\e$ defined by the maximality condition (\ref{bestI}) can be chosen to coincide with indices of the family $I_0$ and do not depend on $\e$.
On the other hand the vector excluded from $I_\e$ will be not only those in $J_0$ but also the ones that have been added with a weight factor of a power of $\e$,
$$
\{X_k^\e: k\in J_\e\} = \{X^0_{i_{0,k}}: i_{0,k}\in J_0\} \cup\{\e^{d(i_{0,k})-1} X^0_{i_{0,k}}:  i_{0,k}\in I_0, i_{0,k} > m \}  $$ $$  \cup\{\e^{d(i_{0,k})-1} X^0_{i_{0,k}}:  i_{0,k}\in J_0 {, i_{0,k}>m} \}.$$
In correspondence with this decomposition of the set of indices we define a splitting in the
$v-$variables in \eqref{phi} as $$v=(\hat v, {\tilde v, \bar v}).$$
Consequently for every $\e<r$ the function  $\Phi_{\e, v, x}(u)$  can be written as  
\begin{equation}\label{e0}
\Phi_{\e, v, x}(u)   =
exp\Big(\sum_{i_{\e j}\in I_\e} u_j X^\e_{i_{\e j}} + \sum_{i_{\e k}\in J_\e} v_k X^\e_{i_{\e k}} \Big)(x)= exp\Big(\sum_{i_{0 j}\in I_0} u_j X^0_{i_{0 j}} + \sum_{i_{\e k}\in J_\e} v_k X^\e_{i_{\e k}} \Big)(x)=\end{equation}
$$exp\Big(\sum_{i_{0 j}\in I_0} u_j X^0_{i_{0 j}} + \sum_{i_{0 k}\in J_0} \hat v_k Y_{i_{0 k}} +
\sum_{i_{0 k}\in I_0, i >m } {\tilde v}_k \e^{d(i_{0 k})-1}  Y_{i_{0 k}}
+ \sum_{i_{0 k}\in J_0  {, i_{0,k}>m}  }{\bar v}_k \e^{d(i_{0 k})-1} Y_{i_{0 k}}
\Big)(x)=$$
$$=\Phi_{0, \hat v_k + \bar v_k \e^{d(i_{0 k})-1}, x}(u_1, \cdots u_m, u_{m+1} + {\tilde v}_{m+1} \e^{d(i_{0 m+1})-1}, \cdots, u_{n} + {\tilde v}_{n} \e^{d(i_{0 n})-1}).$$

Let us define mappings
$$F_{1,\e,v}(u)=\bigg(u_1,...,u_m, u_{m+1}+\tilde v_{m+1}\e^{d(i_{0  m+1})-1},...,u_{n} + {\tilde v}_{n} \e^{d(i_{0 n})-1}\bigg),$$
and
$$F_{2,\e}(v)=\bigg(\hat v_1 + \bar v_1 \e^{d( i_{01})-1 }, ..., \hat v_{2p-m} + \bar v_{2p-m} \e^{d( i_{0, 2p-m})-1}\bigg).$$
In view of \eqref{e0} we can write \begin{equation}\label{compos}
\Phi_{\e, v, x}(u) =\Phi_{0, F_{2,\e}(v), x}(F_{1,\e,v}(u)).
\end{equation}

Note that for any $\e\ge 0$ and for a fixed $v$, the mapping $u\to F_{1,\e,v}(u)$ is invertible and volume preserving in all $\R^n$. Moreover $J\Phi_{\e, v, x}(u) =J\Phi_{0, F_{2,\e}(v), x}(F_{1,\e,v}(u)).$
In view of \eqref{compos} and of Theorem \ref{MAINNSWeps}, as a function of $u$, the mapping $\Phi_{\e,v,x}(u)$ is defined,  invertible, and satisfies the Jacobian estimates in Theorem  \ref{MAINNSWeps} (ii)
$$\frac{1}{4} |\lambda^0_{I_0} (x)| \leq |J\Phi_{0, F_{2,\e}(v),x}(F_{1,\e,v}(u))|=|J\Phi_{\e, v,x}(u)| \leq 4 |\lambda^0_{I_0} (x)| $$ for all $u$ such that  $F_{1,\e,v}(u)\in Q_0(C_{1,0}r)$  and for $v$ such that
$$|F_{2,\e}^k(v)|=|\hat v_k + \bar v_k \e^{d(i_{0 k})-1}|\leq C_{2, 0}r^{d(i_{0 k})}, $$$$ |u_1|\leq  C_{1, 0}r^{d(i_{0 1})}   \cdots |u_m |\leq C_{1, 0} r^{d(i_{0 m})}, |u_{m+1} + {\tilde v}_{m+1} \e^{d(i_{0 m+1})-1}|\leq C_{1, 0} r^{d(i_{0 m+1})},$$
when $k=1,...,2p-m$.

The completion of the proof of Case 1 rests on the following two claims:

{\bf Claim 1} let  $\e<r<R_0$. There exists $C_6>0$, independent of $\e$, such that for all $v$ satisfying  $|v_k| \leq C_6r^{d(i_{\e k})}$ one has  $|F_{2,\e}^k(v)|=|\hat v_k + \bar v_k \e^{d(i_{0 k})-1}|\leq C_{2, 0}r^{d(i_{0 k})} .$

{\bf Proof of the claim:}
If we choose  $C_6< \min\{C_{1, 0}, C_{2, 0}\}$ and
$$|\hat  v_k|, |\tilde v_k|, |\bar v_k|\leq \min\{C_{1, 0}, C_{2, 0}\}\frac{r^ {d(i_{\e k})}}{4}, \quad |u_j|\leq C_{1, 0}\frac{r^{d(i_{\e j})}}{4},$$
it follows that
$$|\hat v_k|\leq C_{2, 0}\frac{r^ {d(i_{0 k})}}{4}, \quad   |\tilde v_k|, |\bar v_k|  \leq C_{1, 0}\frac{r}{4},\quad |u_j| \leq C_{1, 0}\frac{r^{d(i_{\e j})}}{4}.$$
So that
$$|\hat v_k|\leq C_{2, 0}\frac{r^ {d(i_{0 k})}}{4}, \quad \e^{d(i_{0 k})-1} |\tilde v_k|,  \quad \e^{d(i_{0 k})-1}|\bar v_k|  \leq C_{1, 0}\frac{r^{d(i_{0 k})} }{4},\quad |u_j| \leq C_{1, 0}\frac{r^{d(i_{0 j})}}{4},$$ completing the proof of the claim.

{\bf Claim 2} Let  $\e<r<R_0$ and $v$ fixed such that $|v_k| \leq C_6 r^{d(i_{\e k})}$ for $k=1,...,2p-m$. One has that $$ Q_\e(C_5^{-1} r)\subset F_{1,\e,v}^{-1}( Q_0(C_{1,0} r)) \subset  Q_\e(C_5 r)$$ for some constant $C_5>0$ independent of $\e\ge 0$.

{\bf Proof of the claim:} Choose $C_5$ sufficiently large so that $2\max \{ C_5^{-1}, C_6 \}\le C_{1,0}$ and observe that
if $u\in Q_\e(C_5^{-1} r)$ then for $k=1,...,m$ we have $|u_k|\le C_{1,0}r^{d(i_{\e,k})}=C_{1,0}r^{d(i_{0,k})}$
while for $k=m+1,...,n$ we have $|F^k_{1,\e,v}(u)|=|u_k+\tilde v_k \e^{d(i_{0k})-1}|\le
\max\{C_5^{-1}, C_6 \} r^{d(i_{0 k})} (1+\bar \e^{d(i_{0 k})-1})\le C_{1,0} r^{d(i_{0 k})}$. This proves the first inclusion in the claim. To establish the second inclusion we choose $C_5$ large enough so that $2(C_{1,0}+C_{2,\bar \e})\le C_5$ and observe that if $F_{1,\e,v}(u)\in  Q_0(C_{1,0} r)$ then
for $k=m+1,...,n$ one has $|u_k|\le |u_k+\tilde v_k \e^{d(i_{0k})-1}|+|\tilde v_k| \e^{d(i_{0k})-1}\le
2(C_{1,0}+C_{2,\bar \e}) r^{d(i_{0 k})}\le C_5 r^{d(i_{0 k})}$. The corresponding estimate for the range
$k=1,...,m$ is immediate.

In view of Claims 1 and 2, and of  Theorem \ref{MAINNSWeps}
It follows that for $\e<r$ and these choices of constants (independent of $\e$)\footnote{$R_0$ in place of $R_\e$,  $C_5$ in place of $C_{1,\e}$ and
$C_6$ in place of $C_{2,\e}$} the function $\Phi_{\e, v,x}(u) $ is invertible
on $Q_0(C_{1,0}r)$ and  i), ii) and iii) are satisfied.

\bigskip

Case 2: As remarked above, in the range $0<r<\e<\bar \e$ one can assume that the $n-$tuple  satisfying the maximality condition \eqref{bestI} will include only vectors of the form $\{\e^{d(i_{\e 1})-1}Y_{i_{\e 1}}, ...,
\e^{d(i_{\e n})-1}Y_{i_{\e n}} \}$ for some $n-$index $I_\e=(i_{\e 1},...,i_{\e n})$, with $1\le i_{\e k} \le p$.  Note that in view of \eqref{Yer} and  the maximality condition  (\ref{bestI}) the corresponding term
$$|\lambda^\e_{I_\e}(x)|r^{d_\e(I_\e)}$$
can be rewritten and estimated as follows
$$|\lambda^\e_{I_\e}(x)|r^{d_\e(I_\e)}= \e^{d(I_\e)-n} r^n |\det ( 
Y_{i_{\e 1}}, ..., Y_{i_{\e n}} ) (x)| .$$
It is then clear that the maximizing $n-$tuple $I_\e$ in (\ref{bestI})  will be identified by the lowest degree $d(I_\e)$ among all $n-$tuples  corresponding to non-vanishing determinants $\det( Y_{i_{\e 1}}, ..., Y_{i_{\e n}} ) $ in a neighborhood of the point $x$. Since this choice does not depend on $\e>r$, then one has that $I_\e=I_{\bar \e}$.
%
%
%
%
In other words, 
if we denote $$(X^{\bar \e})_{ {i_{\bar \e,k}} \in I_{\bar \e}} =
\{\bar \e^{d(i_{\bar\e,1})-1} Y_{i_{\bar\e,1}}, \cdots, \bar \e^{d(i_{\bar\e,n})-1} Y_{i_{\bar\e,n}} \}$$
then the maximality condition  (\ref{bestI}) in the range $0<r<\e<\bar \e$ can be satisfied independently from $\e$ by selecting the  family of vector fields:
 $$(X^\e)_{{i_{\e,k}} \in I_\e}=\{\e^{d(i_{\bar\e,1})-1} Y_{i_{\bar\e,1}}, \cdots, \e^{d(i_{\bar\e,n})-1} Y_{i_{\bar\e,n}} \}$$

The complementary family $J_\e$ becomes \begin{multline}
\{Y^\e_{i_{\e k}}: i_{\e k} \in J_\e\} \\ = \{\e^{d(i_{\bar \e,k})-1} Y_{i_{\bar \e,k}}: i_{0,k}\in J_{\bar \e}, \text{ with } i_{\bar \e,k} \le p \}  \cup \{ Y_{i_{\bar \e,k}-p+m}: i_{\bar \e,k}\in J_{\bar \e}, \text{ with }i_{\bar \e,k}>p\}
\end{multline}
If we denote $A_\e$, and $B_\e$ these three sets, and split the $v-$variable from \eqref{phi} as $v=(\hat v, \tilde v)$, then
it is clear that
$$Y\in A_\e \text{ iff }  \frac{{\bar \e}^{d(i_{\bar \e,k})-1} }{\e^{d(i_{\bar \e,k})-1} }Y\in A_{\bar \e}, $$ and in this case the values of $d_\e$ and $d_{\bar\e}$ are the same on the corresponding indices. Analogously $ Y\in B_\e \text{ iff } Y\in B_{\bar \e}$
and the degrees are the same.

For every $\e>r$ the map  $\Phi_{\e, v, x}(u)$  then can be written as 

$$\Phi_{\e, v, x}(u)   =
\exp\Big(\sum_{i_{\e j}\in I_\e} u_j X^\e_{i_{\e j}} + \sum_{i_{\e k}\in J_\e} v_k X^\e_{i_{\e k}} \Big)(x)= 
\exp\Big(\sum_{i_{\bar \e j}\in I_{\bar \e} } u_j X^\e_{i_{\bar \e j}} + \sum_{i_{\bar \e k}\in J_{\bar \e}} v_k X^\e_{i_{\bar \e k}} \Big)(x)$$
$$=
\exp\Big(\sum_{i_{\bar \e j}\in I_{\bar \e}} u_j \frac{{ \e}^{d(i_{\bar \e,k})-1} }{\bar \e^{d(i_{\bar \e,k})-1} }X^{\bar \e}_{i_{\bar \e j}} 
+ \sum_{i_{\bar \e k}\in J_{\bar \e}\text{ and }i_{\bar \e j} \le p}{\hat v}_k   \frac{{ \e}^{d(i_{0,k})-1} }{\bar \e^{d(i_{0,k})-1} }X^{\bar \e}_{i_{0 k}}
+ \sum_{i_{\bar \e k}\in J_{\bar \e}\text{ and }i_{\bar \e j} > p} \tilde v_k X^{\bar \e}_{i_{\bar \e k}} 
\Big)(x)$$

This function is defined and invertible for
$$ \ | \tilde v_k| ,\ |{\hat v}_k  | \frac{{ \e}^{d(i_{\bar \e,k})-1} }{\bar \e^{d(i_{\bar \e,k})-1} }\leq C_{2, \bar \e}r^{d_{\bar \e}(i_{\bar \e k})},  |u_j |\frac{{ \e}^{d(i_{0,j})-1} }{\bar \e^{d(i_{\bar \e,j})-1} }\leq C_{1, \bar \e} r^{d_{\bar \e}(i_{\bar \e j})}.$$
 Recall that with the present choice of $r<\e<\bar \e$, we have $C_{1, \bar \e} r^{d_{\bar \e}(i_{\bar \e j})}=C_{1, \bar \e} r^{d_{ \e}(i_{\bar \e j})}=C_{1, \bar \e} r^{d_{ \e}(i_{\e j})}$.
If we set
$$|\hat v_k|,| \tilde v_k|\leq C_{2, \bar \e} r^{d_{\bar \e(i_{\bar \e k})}},
$$$$|u_j|\leq C_{1, \bar \e} r^{d_{\bar \e}  (i_{\bar \e j})},$$

and argue similarly to Case 1, then the function $\Phi_{\e, v, x}$ will satisfy conditions i), ii), and iii) on $Q(C_{1,\bar \e}r)$  and hence on $Q(C_{1,\e}r)$, with constants independent of $\e$. \end{proof}

\subsection{Equiregular subRiemannian structures and equivalent pseudo-distances}
The intrinsic definition, based on a minimizing choice, of the Carnot-Caratheodory metric is not convenient when one needs to produce quantitative estimates, as we will do in the following sections. It is then advantageous to use equivalent pseudo-distances which are explicitly defined in terms of certain system of coordinates.
In the last section we have already encountered two special cases, i.e. the norms $|\cdot |$ defined in \eqref{gauge} and its Riemannian approximation \eqref{Nepsilon}. In this section we extend this construction to a all equi-regular subRiemannian structures. For $\Om\subset \R^n$ consider the subRiemannian manifold $(\Omega,\Delta, g)$ and iteratively set $\Delta^1:=\Delta $,
and $\Delta^{i+1}=\Delta^i+[\Delta^i, \Delta]$ for $i\in \N$. The bracket generating condition
is expressed by saying that there exists an integer  $s\in \N$ such that   $\Delta_{p}^s= \R^n$ for all $p\in M$.

\begin{dfn}\label{equiregular} A subRiemannian manifold $(\Om, \Delta, g)$ is {\it equiregular} if, for all $i\in \N$, the dimension of $\Delta^i_p$ is constant in $p\in \Om$. The {\it homogenous dimension}  
\begin{equation}\label{dimension} Q=\sum_{i=1}^{s-1} [\dim (\Delta_p^{i+1})-\dim (\Delta_p^i)],\end{equation}
coincides with the Hausdorff dimension with respect to the Carnot-Caratheodory distance.
\end{dfn}

This class is {\it generic} as any subRiemannian manifold has a dense open subset on which the restriction of the subRiemannian metric is equiregular.

\begin{ex} Systems of free vector fields, as defined in Definition \ref{D:free}, yield a distribution $\Delta$ that supports an equiregular subRiemannian structure for any choice of the horizontal metric $g$.
\end{ex}

Next we assume we have a equiregular subRiemannian manifold $(\Om, \Delta, g)$ and consider an orthonormal horizontal basis $X_1,...,X_m$ of $\Delta$. Following the process in \eqref{NSWspaces} one can construct  a frame $Y_1,...,Y_n$ for $\R^n$ where  $Y_1,...,Y_m$ is the original horizontal frame and $Y_{m+1},...,Y_{n}$ are commutators such that $(Y_1,...,Y_{m_k})|_p$ spans $\Delta^k_p$, for $k=1,...,s$. The degree $d(i)$ of $Y_i$ is the order of commutators needed to generate $Y_i$ out of the horizontal span, i.e. $d(i)=k$ if $Y_i\in \Delta^k_p$ but $Y_i \notin \Delta_p^{k-1}$. In particular one has $d(i)=1$ for $i=1,...,m$. The equiregularity hypothesis  allows one to choose  $Y_1,...,Y_n$ linearly independent.
Next we extend $g$ to a Riemannian metric $g_1$ on all of $T\Om$ by imposing that $Y_1,...,Y_n$ is an orthonormal basis. 

\begin{dfn}\label{metrica-epsilon} For any $\e\in (0,\bar \e]$ we define the Riemannian metric $g_\e$ by setting that $\{ \e^{d(i)-1} Y_i$, $i=1,...,n$ is an orthonormal frame. Denote by $d_\e(x,y)$ the corresponding Riemannian distance function.
\end{dfn}
\begin{rmrk} Repeating the proof of \cite[Theorem 4]{NSW} one immediately sees that $d_\e$ as defined here  is comparable to the distance $d_\e$  defined in  Section \ref{DEPS}, with equivalence constants independent of $\e>0$.
\end{rmrk}
We define canonical coordinates around a point $x_0\in \Om$ as follows. Since $Y_1,...,Y_n$ is a generating frame for $T\Om$ then for any point  $x$ in a neighborhood $\omega$ of $x_0$ one has that there exists a unique $n-$tuple $(x_1,...,x_n)$ such that \begin{equation}\label{x-coord}
\exp( \sum_{i=1}^n x_i Y_i)(x_0)=x.\end{equation}
We will set $x=(x_1,...,x_n)$ and use this $n-$tuple as local coordinates in $\omega$.

\begin{dfn} For every $x=(x_1,...,x_n)\in \omega$ we define a pseudo-distance
$d_{G,\e}(x,x_0):=N_\e(x_1,...,x_n)$ with 
\begin{equation}
N_\e(x_1,...,x_n):= \sqrt{\sum_{i=1}^m x_i^2 }+ \sum_{i=m+1}^n \min  \big(\e^{-(d(i)-1)} |x_i|, |x_i|^{1/d(i)}  \big).
\end{equation}
For $\e=0$ we set 
$$N_0(x_1,...,x_n):=\sqrt{\sum_{i=1}^m x_i^2 }+   \sum_{i=m+1}^n  |x_i|^{1/d(i)}  .$$
\end{dfn}
\begin{thrm}\label{epsilon-equiv} For every compact $x_0\in K\subset \omega$ there exists $C=C(K,\Delta, g, \omega)>0$, independent of $\e\in (0,\bar \e]$ ,  such that
$$C^{-1} d_{G,\e}(x,x_0) \le d_{\e}(x,x_0) \le C d_{G,\e}(x,x_0)
$$
for all $x\in K$.
\end{thrm}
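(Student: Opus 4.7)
The strategy is to follow the pattern of Lemma \ref{N=d} from the Heisenberg case and prove the two inequalities separately, using the uniform Nagel--Stein--Wainger ball-box theorem (Theorem \ref{MAINNSWeps}, now uniform in $\e$ by Proposition \ref{indipendenza}) as the main black-box.

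\emph{Upper bound $d_\e(x,x_0)\le C\,d_{G,\e}(x,x_0)$.} The plan is to build an explicit piecewise path from $x_0$ to $x$ and use the subadditivity of $d_\e$. The key local estimate is that for each $i$ and each small $s\in\R$,
\begin{equation*}
d_\e\bigl(\exp(sY_i)(x_0),\,x_0\bigr)\;\le\; C\,\min\!\bigl(\e^{-(d(i)-1)}|s|,\;|s|^{1/d(i)}\bigr),
\end{equation*}
which I would prove by exhibiting two competing curves: (a) the integral curve $t\mapsto\exp(tY_i)(x_0)$, $t\in[0,s]$, whose $g_\e$-length equals $\e^{-(d(i)-1)}|s|$ because $\e^{d(i)-1}Y_i$ is $g_\e$-orthonormal; and (b) a purely horizontal curve realizing the subRiemannian cost $|s|^{1/d(i)}$, available because $Y_i$ is an iterated commutator of order $d(i)$ of horizontal fields and $d_\e\le d_0$. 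For $i\le m$ only option (a) is needed, giving cost $|s|$. Then I would pass from canonical coordinates of the first kind to the second-kind coordinates $\tilde x=\exp(x_1Y_1)\cdots\exp(x_nY_n)(x_0)$, apply the above estimate coordinate-by-coordinate, and conclude by subadditivity.

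\emph{Lower bound $d_{G,\e}(x,x_0)\le C\,d_\e(x,x_0)$.} Here I would invoke Proposition \ref{indipendenza} and the inclusion (iii) of Theorem \ref{MAINNSWeps}: for $r$ small and $C$ independent of $\e$,
\begin{equation*}
B_\e(x_0,r)\;\subseteq\;\Phi_{\e,0,x_0}\bigl(Q_\e(Cr)\bigr).
\end{equation*}
In the equiregular setting, the maximizing $n$-tuple $I_\e$ picks out exactly the independent frame $\{Y_i\}$ (after the $\e$-weighted rescaling in \eqref{definefields}), so $\Phi_{\e,0,x_0}(u)=\exp\bigl(\sum u_j\,\e^{d(i_j)-1}Y_{i_j}\bigr)(x_0)$. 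Reading off the canonical coordinates of any $x\in B_\e(x_0,r)$ through the smooth change of variables between first- and second-kind canonical parametrizations (which, after weighting, has Jacobian bounded above and below uniformly in $\e$), I would obtain
\begin{equation*}
|x_i|\;\le\;C\begin{cases} r & \text{if } i\le m,\\ \min\!\bigl(\e^{d(i)-1}r,\,r^{d(i)}\bigr) & \text{if } i>m,\end{cases}
\end{equation*}
reflecting the two regimes $r\le\e$ (Euclidean) and $r>\e$ (sub\-Riemannian) treated in Cases 1 and 2 of Proposition \ref{indipendenza}. Plugging into the definition of $N_\e$ yields $N_\e(x)\le Cr$, as required.

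\emph{Main obstacle.} The delicate point is the change of variables between the NSW exponential parametrization $\Phi_{\e,v,x_0}$ (exponential of a general linear combination of rescaled $X_i^\e$'s) and the first-kind canonical parametrization $x\mapsto\exp(\sum x_iY_i)(x_0)$, tracked \emph{uniformly in} $\e$. A naive BCH expansion has coefficients depending on iterated brackets of the $X_i^\e$'s, and these are unbounded as $\e\to 0$ (cf.\ the remark after Definition \ref{prima-def}). The resolution is that, once one weights each direction by $\e^{d(i)-1}$ so that the $Y_i$-component carries the natural $\e$-scaling, the BCH corrections are of strictly higher weighted order than the leading terms and can be absorbed on a sufficiently small (but $\e$-independent) neighborhood. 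Said differently, the content of Proposition \ref{indipendenza} itself is precisely that the two parametrizations are uniformly bi-Lipschitz on the weighted cubes $Q_\e(Cr)$, which is exactly what is needed to close the lower-bound argument.
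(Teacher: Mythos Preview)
Your lower bound is essentially the paper's argument, and your identification of the NSW ball--box inclusion (Theorem \ref{MAINNSWeps} made $\e$-uniform by Proposition \ref{indipendenza}) as the main engine is correct.

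Where you diverge is in the upper bound and in your assessment of the ``main obstacle.'' The paper does \emph{not} use explicit competing paths or second-kind canonical coordinates at all; it derives \emph{both} inclusions from the NSW ball--box comparison. The key observation you are missing is that in the equiregular case the extended family $X_1^\e,\dots,X_{2n-m}^\e$ consists of two copies of each $Y_k$ for $k>m$ (one weighted by $\e^{d(k)-1}$, one unweighted), so the NSW map
\[
\Phi_{\e,v,x_0}(u)=\exp\Big(\sum_{I_\e}u_jX^\e_{i_j}+\sum_{J_\e}v_kX^\e_{i_k}\Big)(x_0)
\]
is \emph{already} a first-kind exponential of a linear combination of the $Y_i$'s. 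Hence the passage to the canonical first-kind coordinates $x=\exp(\sum x_iY_i)(x_0)$ is a genuinely linear reparametrization: one simply reads off $x_k=\e^{d(k)-1}u_{i_k}+v_{j_k}$ or $x_k=u_{i_k}+\e^{d(k)-1}v_{j_k}$ according to which copy of $Y_k$ landed in $I_\e$. No BCH is involved. From $|u_{i_k}|\le Cr^{d_\e(i_k)}$ and $|v_{j_k}|\le Cr^{d_\e(j_k)}$ one gets $|x_k|\le C(\e^{d(k)-1}r+r^{d(k)})$, and a short case check gives $\min(\e^{-(d(k)-1)}|x_k|,|x_k|^{1/d(k)})\le Cr$, i.e.\ $B_\e(x_0,r)\subset B_{G,\e}(x_0,Cr)$. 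The reverse inclusion is obtained by setting $v=0$ and splitting into the regimes $\e\ge 2r$ and $\e<2r$, exactly as you anticipated.

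So your BCH concern, while legitimate for the path-based approach you propose (where concatenating $\exp(x_iY_i)$'s lands you at a second-kind endpoint, not at $x$), is a self-inflicted difficulty: the paper's route sidesteps it entirely by exploiting that the NSW parametrization and the chosen canonical coordinates are both of first kind.
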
 

\begin{rmrk} Note that for $\e=0$ the equivalence is a direct consequence of the {\it Ball-Box theorem} proved by Nagel, Stein and Wainger \cite{NSW} or Mitchell \cite[Lemma 3.4]{mitchell}. This observation replaces the estimates \eqref{stesso} from the Heisenberg group setting.
\end{rmrk}

The proof of Theorem \ref{epsilon-equiv} follows as a corollary of the following
\begin{prop} In the hypothesis of Theorem \ref{epsilon-equiv} one has that there exists $R=R(K,\Delta, g, \omega)>0, C=C(K,\Delta, g, \omega)>0$, independent of $\e\in (0,\bar \e]$ ,  such that for all $x\in K$ and $r\in (0,R)$,
$$ B_{G,\e}(x_0, C^{-1} r) \subset B_{\e}(x_0,r) \subset  B_{G,\e}(x_0, C r),
$$
where $$B_{G,\e}(x_0,  r) :=\Bigg\{ x\in \R^n \text{ such that } \max_{i=1,...,s} \Bigg[  \min  \big(\e^{-(d(i)-1)} |x_i|, |x_i|^{1/d(i)}  \big)  \Bigg]   < r  \Bigg\}.$$

\end{prop}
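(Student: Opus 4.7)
The plan is to deduce the ball inclusions from the uniform Nagel--Stein--Wainger ball--box theorem (Theorem \ref{MAINNSWeps} with the $\e$-independent constants granted by Proposition \ref{indipendenza}), after explicitly identifying the optimal $n$-tuple $I_\e$ and the map $\Phi_{\e,v,x_0}$ in the equiregular setting. In that setting, any $n$-tuple with nonvanishing $\lambda^\e_I(x_0)$ must pick each $Y_i$ exactly once, either in its scaled form $\e^{d(i)-1}Y_i$ (an index $\le p$) or in the unscaled form $Y_i$ (an index in $\{p+1,\ldots,2p-m\}$ from Definition \ref{prima-def}). A direct computation gives
\[
\lambda^\e_I(x_0)\,r^{d_\e(I)} \;=\; \det(Y_1,\ldots,Y_n)(x_0)\cdot r^m\cdot\prod_{i\in S}\e^{d(i)-1}r\cdot\prod_{i\in\{m+1,\ldots,n\}\setminus S}r^{d(i)},
\]
where $S\subset\{m+1,\ldots,n\}$ indicates the scaled choices. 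Maximizing term-by-term shows that the optimal $I_\e$ picks the scaled version for every $i\ge m+1$ when $r<\e$, and the unscaled version for every $i\ge m+1$ when $r>\e$; in either case $|\lambda^\e_{I_\e}(x_0)|$ is bounded below by a positive constant on $K$, uniformly in $\e\in(0,\bar\e]$.

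With $v=0$ (admissible in Theorem \ref{MAINNSWeps}(iii) since $|v_k|\le C_2 r^{d_\e(i_{\e k})}$ is trivially satisfied), the map $\Phi_{\e,0,x_0}(u)=\exp\bigl(\sum_{i\in I_\e} u_i X^\e_i\bigr)(x_0)$ becomes, after the diagonal change of variable
\[
x_i = u_i \text{ for } i\le m,\qquad x_i=\e^{d(i)-1}u_i \text{ (regime } r<\e\text{)} \text{ or } x_i=u_i \text{ (regime }r>\e\text{)} \text{ for } i\ge m+1,
\]
exactly the canonical coordinate map $x\mapsto \exp\bigl(\sum_{i=1}^n x_i Y_i\bigr)(x_0)$ of \eqref{x-coord}. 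Meanwhile the weighted cube $Q_\e(cr)=\{|u_j|\le c r^{d_\e(i_{\e j})}\}$ has sides $cr$ in all directions for the scaled choice (since then $d_\e(i_{\e j})=1$ for every $j$) and sides $c r^{d(i)}$ for the unscaled choice. Pulling $Q_\e(cr)$ back through the above change of variable yields in canonical coordinates the box
\[
\Bigl\{x\in\omega\,:\,|x_i|\le cr\ (i\le m),\ |x_i|\le c\,r\,\e^{d(i)-1}\ (i\ge m{+}1)\Bigr\}\quad(r<\e),
\]
respectively $\{|x_i|\le cr\ (i\le m),\ |x_i|\le c r^{d(i)}\ (i\ge m{+}1)\}$ when $r>\e$.

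Finally, the key algebraic observation is that $\min\bigl(\e^{-(d(i)-1)}|x_i|,|x_i|^{1/d(i)}\bigr)<r$ is equivalent to $|x_i|<\max\bigl(r\,\e^{d(i)-1},r^{d(i)}\bigr)$, with the maximum equal to $r\,\e^{d(i)-1}$ precisely when $r<\e$ and to $r^{d(i)}$ when $r>\e$. Thus in both regimes the image $\Phi_{\e,0,x_0}(Q_\e(cr))$ coincides with $B_{G,\e}(x_0,c'r)$ up to a multiplicative constant independent of $\e$; combined with Theorem \ref{MAINNSWeps}(iii) this yields
\[
B_{G,\e}(x_0,C^{-1}r)\;\subset\;\Phi_{\e,0,x_0}(Q_\e(C_{1,\e}r))\;\subset\; B_\e(x_0,r)\;\subset\;\Phi_{\e,0,x_0}(Q_\e(C_{1,\e}r/C_{2,\e}))\;\subset\; B_{G,\e}(x_0,Cr),
\]
with constants depending only on $K,\Delta,g,\omega$, as required. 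I expect the main technical difficulty to lie in showing that the optimal choice of $I_\e$ and the associated constants behave uniformly as $r$ crosses the transition scale $r\sim\e$; this is precisely what Proposition \ref{indipendenza} supplies, by ensuring the two possible optimizers can be interpolated through stable estimates. A second, minor issue is the equivalence between the $\max$-form defining $B_{G,\e}$ and the sum-form of $N_\e$ used in $d_{G,\e}$, but this is a purely algebraic loss of a factor $n$, harmless for the corollary Theorem \ref{epsilon-equiv}.
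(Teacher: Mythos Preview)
Your proposal is correct and follows essentially the same approach as the paper: both arguments identify the optimal $I_\e$ in the equiregular case (scaled versions for $r<\e$, unscaled for $r>\e$), invoke Theorem \ref{MAINNSWeps}(iii) with the $\e$-independent constants from Proposition \ref{indipendenza}, and translate the weighted cube $Q_\e(cr)$ into canonical coordinates via the exponential map to compare with $B_{G,\e}$. Your presentation is slightly more streamlined: you set $v=0$ from the start for both inclusions and use the clean algebraic equivalence $\min(\e^{-(d(i)-1)}|x_i|,|x_i|^{1/d(i)})<r \Leftrightarrow |x_i|<\max(r\e^{d(i)-1},r^{d(i)})$, whereas the paper keeps a general $v$ for the forward inclusion (leading to the combined bound $|x_k|\le C(\e^{d(k)-1}r+r^{d(k)})$) and only sets $v=0$ for the reverse one; the substance is identical.
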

\begin{proof}  The proof follows closely the arguments in the previous section and is based on the results   in \cite{NSW}.  In view of the equiregularity hypothesis note that $Y_1,...,Y_n$ are linearly independent and the construction in \eqref{definefields} yields the distribution
$X_1^\e,...,X_{2n-m}^\e$ over $\Omega$.  Recall from \eqref{phi}, Proposition \ref{indipendenza} and Theorem \ref{MAINNSWeps} that  if $I_\e, J_\e$ are chosen as in \eqref{bestI} and for any $v=(v_1,...,v_{n-m})$ such that $|v_k| \leq C_{2 \e}r^{d(i_{\e k})}$, one has
\begin{equation}\label{equiv-balls}B_\e(x_0,r) \approx \Phi_{\e,v,x_0} (Q_\e (r)),\end{equation}
with constants independent from $\e\ge 0$, 
where  $Q_\e=\{u\in \R^n: |u_j| \leq  r^{d_\e( i_{\e j})}\}$,   and
$$\Phi_{\e, v, x}(u) = exp\Big(\sum_{i_{\e, j}\in I_\e} u_j X^\e_{i_{\e, j}} + \sum_{i_{\e, k}\in J_\e} v_k X^\e_{i_{\e, k}} \Big)(x).$$
The $n-$tuple $I_\e$  contains $n$ indexes related either  to the horizontal vector fields $X_1^\e, ..., X_m^\e$ or to the  commutators $X_{m+1}^\e,...,X_n^\e$. The latter may consist of {\it weighted } versions $X_{m+1}^\e,..., X_n^\e$ or {\it unweighted versions} $X_{n+1}^\e, ..., X_{2n-m}^\e$.  In either case the same vector will appear both in the weighted and in the unweighted version (either among the $I_\e$ indexes or in the complement $J_\e$).  Comparing the representation  $\Phi_{\e,v,x_0}$ with the  $x-$coordinates representation \eqref{x-coord}  one has $$\exp( \sum_{i=1}^n x_i Y_i)(x_0)= exp\Big(\sum_{i_{\e, j}\in I_\e} u_j X^\e_{i_{\e, j}} + \sum_{i_{\e, k}\in J_\e} v_k X^\e_{i_{\e, k}} \Big)(x_0),$$ and we let for each $k=1,...,n$
$$x_k= \begin{cases}  \e^{d(k)-1} u_{i_k} + v_{j_k}  & \text{ if }   i_k\le n \\
u_{i_k}+ \e^{d(k)-1}   v_{j_k}   & \text{ if }  i_k>n \end{cases}. $$ From the latter we obtain that for all $k=1,...,n$
$$|x_k| \le C(\e^{d(k)-1} r + r^{d(k)}).$$

If $x\in B_\e(x_0,r)$ then $|u_{i_k}|, |v_{j_k}|\le C r^{d(k)}$. Consequently,  
$$\min ( \e^{-(d(k)-1)} |x_k|, |x_k|^{1/d(k)} ) \le C \min ( \e^{-(d(k)-1)} |\e^{d(k)-1} r + r^{d(k)}|, \bigg[ \e^{d(k)-1} r + r^{d(k)}\bigg]^{1/d(k)} )$$
$$\le C \min ( r \bigg[ 1+ \bigg(\frac{r}{\e}\bigg)^{d(k-1)}\bigg], \ \ r \bigg[ \bigg(\frac{\e}{r}\bigg)^{d(k)-1} + 1\bigg]^{1/d(k)} ) \le 2C r.$$
This shows that for $r>0$ sufficiently small, and for some choice of $C>0$ independent of $\e\ge 0$, we have  $B_{\e}(x_0,r) \subset  B_{G,\e}(x_0, C r).$

To prove the reverse inclusion we consider a point $x=\exp( \sum_{i=1}^n x_i Y_i)(x_0)\in   B_{G,\e}(x_0, C r)$. Select $I_\e$ as in \eqref{bestI} and set $v=0$ to  represent $x$ in the basis $X_{i_1},...,X_{i_n}$ as
$$x=exp\Big(\sum_{i_{\e, j}\in I_\e} u_j X^\e_{i_{\e, j}} \Big)(x_0).$$
In view of Theorem \ref{MAINNSWeps}, and \eqref{equiv-balls},  to  prove the proposition it suffices  to show that there exists a constant $C>0$ independent of $\e>0$ such that for each $j=1,...,n$ one has $|u_j|\le Cr^{d_\e(i_{\e j})}$.

We distinguish two cases: In the range $\e\ge 2r$ one can argue as in \eqref{Yer} to deduce that for each $j=1,...,n$  we may assume without loss of generality that the contribution due to $u_jX^\e_{i_{\e, j}}$
follows from the choice of a weighted vector, and hence is of the form
$u_j \e^{d(k)-1} Y_k$ for some $k>m$. Consequently one has $d_\e(i_{\e, j})=1$ and $x_k=u_j \e^{d(k)-1}$. 

On the other hand, since $\e\ge 2r $ then one must also have that 
$$\min ( \e^{-(d(k)-1)} |x_k|, |x_k|^{1/d(k)} ) = \e^{-(d(k)-1)} |x_k| <r.$$
Consequently one has
$$|u_j|= |x_k| \e^{1-d(k)} \le r= r^{d_\e(i_j)}.$$

In the range $\e<2r$ we observe that one must have $|x_k|\le Cr^d(k)$.
Arguing as in \eqref{Yer} we see that without loss of generality, or each $j=1,...,n$,  the contribution due to $u_jX^\e_{i_{\e, j}}$
follows from the choice of a un-weighted vector, and hence is of the form
$u_j  Y_k$ for some $k>m$. Consequently one has $d_\e(i_j)=d(k)>1$ and $x_k=u_j $, concluding the proof.

\end{proof}

\section{Stability of the  Poincar\'e
inequality}

In this section we will focus on the Poincar\'{e} inequality and prove that it holds
 with  a choice of a constant which is stable as  $\epsilon\to 0$.  Our argument rests on results of Lanconelli and Morbidelli \cite{LanMor} whose proof, in some respects, simplifies the method used by Jerison in \cite{jer:poincare}. Using some Jacobian estimates from
  \cite{GarofaloNhieu:lip1998} or  \cite{frss:embedding} we will establish that the assumptions required in the key result \cite[Theorem 2.1]{LanMor} are satisfied independently from $\e\ge 0$.  We start by recalling

\begin{thrm}\cite[Theorem 2.1]{LanMor} \label{LM}
 Assume that the doubling condition $(D)$ is satisfied and there exist a sphere $B_\epsilon(x_0, r)$, a cube $Q_\epsilon \subset \R^n$
and a map $E: B_\e(x_0, r)\times Q_\epsilon \rightarrow \R^n$ satisfying the following conditions:
\begin{itemize}
\item [i)]    $B_\e(x_0, 2 r)\subset E(x, Q_\epsilon)$ \quad for every $x\in B_\e(x_0, r)$
\item[ii)]the function $u \mapsto E(x, u)$ is one to one on the box $ Q_\epsilon$ as a function of the variable $u$ and there exists a constant  $\alpha_1>0$ such that $$\frac{1}{\alpha_1} |JE(x,0)| \leq |JE(x,u)| \leq \alpha_1 |JE(x,0)| \quad  \text{ for every  } u \in Q_\epsilon$$
\end{itemize}
Also assume that there exists a positive constant $\alpha_2$, and a function $\gamma:  B_\e(x_0, r) \times Q_\epsilon \times [0,\alpha_2 r]\rightarrow \R^n$
satisfying the following conditions
\begin{itemize}
 \item[iii)]  For every $(x,u) \in  B_\e(x_0, r) \times Q_\epsilon$ the function
$t \mapsto \gamma(x,u,t)$ is a subunit path connecting $x$ and $E(x,u)$
 \item[iv)] For every $(h,t) \in  B_\e(x_0, r) \times Q_\epsilon$ the function $x \mapsto \gamma(x,u,t)$ is a one-to-one map and there exists a constant $\alpha_3>0$ such that
     $$\inf_{ B_\e(x_0, r)\times Q_\epsilon} \Big|det \frac{\partial \gamma}{\partial x}\Big|\geq\alpha_3$$
\end{itemize}
Then there exists a constant $C_P$ depending only on the constants $\alpha_1, \alpha_2, \alpha_3$ and the doubling constant $C_D$ such that (P) is satisfied.
\end{thrm}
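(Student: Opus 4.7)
The plan is to derive the Poincar\'e inequality (P) from the standard reduction
$$\int_{B} |u - u_{B}|\, dx \le \frac{1}{|B|} \int_{B}\int_{B} |u(x)-u(y)|\, dy\, dx, \qquad B = B_\e(x_0,r),$$
and then exploit the parametrization $E$ together with the subunit curves $\gamma$ from hypotheses (i)--(iv) to convert the pointwise difference $|u(x)-u(y)|$ into an integral of the horizontal gradient along $\gamma$. The target is a bound of the form $C\, r \int_{B_\e(x_0,2r)} |\nabla^\e u|\, dx$ with $C$ depending only on $\alpha_1,\alpha_2,\alpha_3$ and $C_D$.

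The main chain of estimates I would run is the following. Fix $x\in B$; by (i) every $y\in B$ lies in $E(x,Q_\e)$, and (ii) makes the substitution $y = E(x,u)$ legitimate, at the cost of a weight $|JE(x,u)|\le \alpha_1\,|JE(x,0)|$. Hypothesis (iii) then yields
$$|u(x) - u(E(x,u))| \le \int_0^{\alpha_2 r} |\nabla^\e u(\gamma(x,u,t))|\, dt,$$
because $\gamma(x,u,\cdot)$ is a subunit path of length at most $\alpha_2 r$. After Fubini, for each fixed $(u,t)$ one has to bound $\int_{B} |JE(x,0)|\,|\nabla^\e u(\gamma(x,u,t))|\, dx$, to which (iv) applies via the substitution $z = \gamma(x,u,t)$, contributing a factor $\alpha_3^{-1}$; since $\gamma$ is subunit, the image $\gamma(B,u,t)$ is contained in $B_\e(x_0,(1+\alpha_2)r)$, which (D) relates back to $B_\e(x_0,2r)$.

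The step that I expect to be the main obstacle is that the hypotheses do not furnish a pointwise bound on the weight $|JE(x,0)|$. The fix is a geometric observation: (i)--(iii) together with (D) force $|JE(x,0)|$ to be comparable to $|B|/|Q_\e|$ uniformly in $x\in B$. The lower bound
$$|B_\e(x_0,2r)| \le |E(x,Q_\e)| \le \alpha_1\,|JE(x,0)|\,|Q_\e|$$
follows directly from (i) and (ii); the matching upper bound comes from the inclusion $E(x,Q_\e) \subset B_\e(x_0,(1+\alpha_2)r)$ (since $E(x,u)$ is the endpoint of a subunit path of length $\alpha_2 r$ starting at $x$) together with (D). Inserting this comparability cancels the factor $|Q_\e|$ in the estimate; combining with the changes of variable above and a final application of (D) to absorb $B_\e(x_0,(1+\alpha_2)r)$ into $B_\e(x_0,2r)$ delivers (P) with the stated dependence of $C_P$.
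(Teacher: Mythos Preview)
The paper does not give its own proof of this statement: Theorem~\ref{LM} is quoted verbatim from \cite[Theorem 2.1]{LanMor} and used as a black box to prove Theorem~\ref{Poincare-epsilon}. So there is no in-paper argument to compare against.

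That said, your proposal is essentially the Lanconelli--Morbidelli argument, and the skeleton is correct. The reduction to the double integral, the change of variables $y=E(x,h)$ via (i)--(ii), the line-integral bound from (iii), Fubini, and the second change of variables $z=\gamma(x,h,t)$ via (iv) are exactly the moves one makes. Your observation that (i)--(iii) together with doubling force $|JE(x,0)|\approx |B|/|Q_\e|$ uniformly in $x$ is the key point that makes the constants come out right, and your two-sided bound for it is correct.

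One small caveat: your final sentence ``a final application of (D) to absorb $B_\e(x_0,(1+\alpha_2)r)$ into $B_\e(x_0,2r)$'' is not literally an application of doubling, since doubling does not shrink the domain of integration on the right-hand side. What you actually obtain is a $(1,1)$-Poincar\'e inequality with dilation factor $1+\alpha_2$; if $\alpha_2>1$ you either accept that as (P), or invoke the standard self-improvement of weak Poincar\'e inequalities in doubling spaces (or simply run the argument at radius $r'=r/(1+\alpha_2)$ and rescale). This is routine, but worth stating precisely.
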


We are now ready to prove Theorem \ref{Poincare-epsilon}
\begin{proof}
All one needs to establish is  that the assumptions of Theorem \ref{LM} are satisfied unformly in $\e$ on a metric ball. Apply Proposition \ref{indipendenza} and Theorem \ref{MAINNSWeps}   with $K=B_\epsilon(x_0, r)$ and choose the constants $C_i$ produced by these results.
 Set $Q_\epsilon=Q_\e(\frac{3 C_{1}}{C_2} r)$ and let
$$E(x,u)= \Phi_{\epsilon, 0,x}(u), \text{ defined on }
K \times Q_\epsilon\rightarrow \R^n.$$

To establish assumption (i) of Theorem \ref{LM} it suffices to note that by virtue of  condition (iii) in Theorem  \ref{MAINNSWeps} one has that for $x\in B_\epsilon(x_0, r)$, $$B_\e(x_0, 2 r)\subset B_\e(x, 3 r)\subset E(x, Q_\epsilon).$$
 Assumption (ii) in Theorem \ref{LM} is a direct consequence of condition (ii) in Theorem  \ref{MAINNSWeps}, with $\alpha_1= 16$.  Chow's connectivity theorem implies that $E(x,u)$ satisfies assumption (iii), with a function $\gamma$, piecewise expressed as exponential mappings of vector fields of $\e-$degree one. Let us denote   $(X^\e_i)_{i\in I_\e}$ the required vector fields. With this choice of path, it is known (see for example \cite[Lemma 2.2]{GarofaloNhieu:lip1998} or  \cite[pp 99-101]{frss:embedding}) that $x\rightarrow \gamma(x,u,t)$ is a $C^1$ path, with Jacobian determinant
$$\bigg|det \frac{\partial \gamma}{\partial x}(x,u,t)\bigg|= 1 + \psi(x,u,t),$$
for a suitable function $\psi(x,u,t)$
satisfying $$|\psi(x,u,t)|\leq cr, \text{  on  }K\times Q_\e \times [0,cr].$$
Since the constant $c$  depends solely on the Lipschitz constant of the vector fields $(X^\e_i)_{i\in I_\e}$ then  it can be chosen independently of $\e$. As a consequence condition (iv) is satisfied and the proof is concluded.

\end{proof}

\section{Stability of Heat Kernel Estimates}

\subsection{H\"ormander type parabolic operators in non divergence form}
The results in this section concern uniform Gaussian estimates for the heat kernel of certain degenerate parabolic differential equations, and their parabolic regularizations. We will consider a collection of smooth vector fields $X=(X_1, \cdots, X_m)$ satisfying 
H\"ormander's finite rank condition (\ref{Hor}) in an open set $\Om\subset \R^n$. We will   use throughout the section  the definition of degree $d(i)$ relative to the stratification \eqref{NSWspaces}.

A second order, non-divergence form,  ultra-parabolic operator with 
constant coefficients $a_{ij}$ can be expressed as:
\begin{equation}\label{operator}L_A = \p_t - \sum_{i,j=1}^ma_{ij} X_i X_j ,\end{equation}
where $A=( a_{ij})_{ij=1, \ldots m}$  is a symmetric,  real-valued, positively defined $m\times m$ matrix satisfying 
\begin{equation} \label{unifellip}\Lambda^{-1} \sum_{d(i)=1} \xi_i^2 
 \leq \sum_{i,j=1}^m a_{ij} \xi_i \xi_j \leq \Lambda \sum_{d(i)=1} \xi_i^2 \end{equation}
for a suitable constant $\Lambda$. We will also call 
\begin{equation} M_{m, \Lambda}\text{ the set of symmetric  } m\times m \text{ real valued matrix, satisfying \eqref{unifellip}}\end{equation}
If $A$ is the identity matrix then the existence of a heat kernel 
for the operator $L_A$ is a by now classical result due to Folland \cite{fol:1975} and Rothschild and Stein \cite{Roth:Stein}. Gaussian 
estimates have been provided by Jerison and Sanchez-Calle \cite{MR865430}, and by Kusuoka and 
Strook \cite{Kusuoka}.
There is a broad, more recent literature dealing with Gaussian estimates for  non divergence form operators  with H\"older continuous coefficients 
$a_{ij}$. Such estimates have been systematically studied in \cite{BONFI}, \cite{BLU}, \cite{BLU-2004}, \cite{BBLU} where a self-contained proof is provided. 

A natural technique for studying the properties of the operator $L_A$ 
is to consider a parabolic regularization
induced by  the vector fields $X_i^\e $ defined 
in (\ref{definefields}). More precisely, we will define the operator
  \begin{equation}\label{operatore}L_{\epsilon, A} = \p_t - \sum_{i,j=1}^p a^\e_{ij} X_i^\e X_j^\e  \end{equation}
where $a^\e_{i,j} $ is any $p\times p$ positive defined matrix belonging to $ M_{p, 2 \Lambda}$ and such that 
$$a^\e_{i,j} = a_{i,j} \quad \text{ for }i,j=1, \dots m.$$
We will denote 
\begin{equation}\label{matrixset}
M^\e_{p, 2\Lambda}
\end{equation}
the set of such matrices. Formally, the operator $L_{A}$ can be recovered as a
 limit as $\e\to 0$  of operator $L_{\e, A}$. 
Here we are interested in understanding which are the properties of solutions of $L_{\e, A}$ which are preserved in the limit.  

For $\e>0$ consider 
a Riemannian metric $g_\e$ defined as in Remark \ref{subR}, such that  the vector fields $X_i^\e$ are orthonormal. The induced distance function $d_\e$
is biLipschitz equivalent to  the Euclidean norm $||_E$. 
Consequently, the operator $L_{\epsilon, A}$ has a fundamental solution $\Gamma_{\e, A}$, which can be estimated as 
  \begin{equation}\label{gammae}\Gamma_{\e, A}(x)\leq C_\e\frac{e^{-\frac{|x|_{E}^2}{C_\e t}}} {t^{n/2}}\end{equation}
  for some positive constant $C_\e$ depending on $A,\e$ and $X_1,...,X_m$.

Unfortunately the constant $C_\e$ blows up as $\epsilon$ approaches $0$, so the Riemannian estimate \eqref{gamma} alone
does not provide Gaussian bounds of the fundamental solution $\Gamma_A$ of the limit 
operator (\ref{operator}) as $\epsilon$ goes to $0$.  In \cite{Krylov} the elliptic regularization 
technique has been used to obtain 
$L^p$ and $C^\alpha$ regularity of the solutions, which however are far from being 
optimal. 
In \cite{CiMa-F}, new estimates uniform in $\e$ have been provided, in the time independent setting 
which are optimal with respect to 
the decay of the limit operator. In 
\cite{MR3108875} the result has been extended to the parabolic operators, in the special case of Carnot groups.

\bigskip
In order to further extend these estimates, we need to formulate the following definition: 

\begin{dfn}\label{defkernelestimates}

We say that a family of kernels $(P_{\e, A})_{\e>0, A\in M^\e_{p, 2\Lambda}}$, defined on 
$\R^{2n}\times ]0, \infty[ $ has, on the compact sets of an open set $\Om$, an    exponential decay of order $2 + h $,  
uniform with respect to a family of distances $(d_\e)_\e$ and of matrices
$A\in M^\e_{p, 2\Lambda}$ (see definition \ref{matrixset}) 
and we will denote $P_{\e, A}\in \mathcal{E}(2+h, d_\e, M^\e_{p, 2\Lambda}) $ if the following three condition hold: 
\begin{itemize}
\item{For every $K\subset\subset\Omega$ 
there exists a constant $C_\Lambda>0$  depending on $\Lambda$ but independent of $\e>0$, and of the matrix 
$A\in  M^\e_{p, 2\Lambda}$ such that for each $\e>0$, $x,y\in K$ and $t>0$ one has
\begin{equation}\label{gamma}
C_\Lambda^{-1} \frac{t^\frac{h}{2}e^{-C_\Lambda\frac{d_\e(x,y)^2}{t}}} {|B_\e (x, \sqrt{t})|}\le P_{\e, A}(x,y,t)\le C_\Lambda\frac{t^\frac{h}{2} e^{-\frac{d_\e(x,y)^2}{C_\Lambda t}}} {|B_\e (x, \sqrt{t})|}.
\end{equation}}
\item{For  $s\in \N$ and $k-$tuple $(i_1,\ldots,i_k)\in \{1,\ldots,m\}^k$ there exists a constant $C_{s,k}>0$ depending only on $k,s,X_1,...,X_m,\Lambda$ such that
\begin{equation}\label{Xgamma}
|(\p_t^s X_{i_1}\cdots X_{i_k} P_{\e, A})(x,y,t)| \le C_{s,k}    \frac{t^\frac{h-2s-k}{2} e^{-\frac{d_\e(x,y)^2}{C_\Lambda t}}} {|B_\e(x, \sqrt{t})|}
\end{equation}
for all $x,y\in K$ and $t>0$.}
\item{For any $A_1,A_2\in M_\Lambda$, $s\in \N$ and $k-$tuple $(i_1,\ldots,i_k)\in \{1,\ldots,m\}^k$ there exists $C_{s,k}>0$ depending only on $k,s,X_1,...,X_m, \Lambda$ such that
\begin{equation}\label{AmenA}
|(\p_t^s X_{i_1}\cdots X_{i_k} P_{\e, A_1})(x,y,t) - \p_t^s X_{i_1}\cdots X_{i_k} P_{\e, A_2})(x,y, t) |\leq
\end{equation}
$$ \le ||A_1 - A_2||C_{s,k}  \frac{t^\frac{h-2s-k}{2} e^{-\frac{d_\e(x,y)^2}{C_\Lambda t}}} {|B_\e(x, \sqrt{t})|},$$ where $||A||^2:=\sum_{i,j=1}^n a_{ij}^2$.}
\end{itemize}
\end{dfn}

With these notations we will now extend all  these previous results to vector fields which only satisfy the H\"ormander condition, establishing 
estimates which are uniform in the variable $\e$ as $\e\to 0$, and in the choice of the 
matrix $A\in M^\e_{2\Lambda}$ for the fundamental solutions 
$\Gamma_{\e, A}$ of the operators 
 $L_{\e, A}$. To be more specific, we will prove: 

\begin{prop}\label{uniform heat kernel estimates}
The fundamental solution $\Gamma_{\e, A}$ of the operator 
 $L_{\e, A}$, is a kernel with  exponential 
decay of order $2$, uniform with respect to $\epsilon>0$ 
and to $A\in M^\e_{m,\Lambda}$, according to definition (\ref{defkernelestimates}). 
Hence it belongs to the set  $\mathcal{E}(2, d_\e, M^\e_{2\Lambda}) $. 
Moreover, if $\Gamma_A$ is the fundamental solution of the operator 
$L_\A$ defined in (\ref{operator}) one has
\begin{equation}\label{GetobarG}{ X}^\e_{i_1}\cdots { X}^\e_{i_k} \p_t^s  \Gamma_{\e, A}\to {X}_{i_1}\cdots {X}_{i_k}\p_t^s \Gamma_{A}\end{equation}
as $\e\to 0$  uniformly on compact sets and in a dominated way  on subcompacts of $\Om$.

\end{prop}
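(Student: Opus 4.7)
The plan is to follow a Rothschild--Stein lifting and freezing scheme, arranged so that every constant appearing in the argument stays uniform as $\epsilon \to 0$; the key ingredients are the Carnot group case from \cite{MR3108875} together with the stability results Theorems \ref{Main-1}, \ref{Poincare-epsilon}, and \ref{epsilon-equiv}. First, I would lift the family $X_1^\epsilon,\ldots,X_p^\epsilon$ to a system of free H\"ormander vector fields $\tilde X_1^\epsilon,\ldots,\tilde X_p^\epsilon$ on $\R^N$ for suitable $N \ge n$. Since the family $X^\epsilon$ varies smoothly in $\epsilon$ and since the Rothschild--Stein lift is classical at $\epsilon=0$, one can arrange the lifting to depend smoothly on $\epsilon$, and at each base point the lifted fields are approximated by left-invariant vector fields $Y_1^\epsilon,\ldots,Y_p^\epsilon$ on a free Carnot group $\bG$ (equipped with the Riemannian approximation scheme of \cite{MR3108875}), with a remainder of one unit higher in the weighted degree $d_\epsilon$.

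Second, let $L_{\epsilon,A}^{y} = \partial_t - \sum a_{ij}^\epsilon Y_i^\epsilon Y_j^\epsilon$ denote the frozen operator at pole $y$ and $P_{\epsilon,A}^{y}$ its fundamental solution on $\bG$; by \cite{MR3108875} the kernel $P_{\epsilon,A}^{y}$ lies in $\mathcal{E}(2, d_\epsilon^{\bG}, M_{p,2\Lambda}^\epsilon)$ with constants depending only on $\Lambda$ and on the group structure. I would then construct the true fundamental solution on the lifted space via the Levi expansion
$$\tilde \Gamma_{\epsilon,A}(x,y,t) = P_{\epsilon,A}^{y}(x,y,t) + \sum_{k\ge 1}\int_0^t\!\!\int P_{\epsilon,A}^{z}(x,z,t-s)\,\Phi_k(z,y,s)\,dz\,ds,$$
where $\Phi_k$ iterates the freezing error $Z=(L_{\epsilon,A}-L_{\epsilon,A}^{y})P_{\epsilon,A}^{y}$. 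The Rothschild--Stein approximation gives $|Z(x,y,t)| \lesssim d_\epsilon(x,y)\, t^{-1/2}\,P_{\epsilon,A}^{y}(x,y,t)$ uniformly in $\epsilon$, while the uniform doubling (Theorem \ref{Main-1}) and the explicit ball description of Theorem \ref{epsilon-equiv} yield the reproducing convolution bound needed to iterate, producing geometric convergence of the series with $\epsilon$-independent constants and hence the estimates \eqref{gamma}, \eqref{Xgamma}, \eqref{AmenA} on $\tilde\Gamma_{\epsilon,A}$. I would then descend to $\Gamma_{\epsilon,A}$ by integration over the added coordinates as in \cite{Roth:Stein}, using Theorem \ref{Main-1} to bound the descent integrals uniformly.

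For the convergence \eqref{GetobarG}, the bounds \eqref{Xgamma} and \eqref{AmenA} produce uniform equicontinuity and a dominating Gaussian for $\{X^\epsilon_{i_1}\cdots X^\epsilon_{i_k}\partial_t^s\Gamma_{\epsilon,A}\}_{\epsilon}$ on compacta avoiding $t=0$. Arzel\`a--Ascoli extracts a subsequential limit $v$; smooth convergence of the operators $L_{\epsilon,A}\to L_A$ shows $L_A v=0$, and the uniform lower bound in \eqref{gamma} fixes the correct pole behavior at $t=0$, so that $v$ coincides with the corresponding derivative of $\Gamma_A$ by uniqueness. Dominated convergence, with the Gaussian majorant from \eqref{Xgamma}, then upgrades the subsequential limit to full convergence.

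The main technical obstacle is the uniform control of the freezing error $Z$. The commutator structure constants $c_{jk}^\ell$ of the family $X^\epsilon$ blow up as $\epsilon\to 0$, so the standard Rothschild--Stein approximation estimate is not a priori uniform. The point is that these divergent coefficients always appear multiplied by $\epsilon$-powers coming from the rescaling of the higher-order vector fields in \eqref{definefields}, and the net product is bounded once the error is organized according to the $\epsilon$-weighted degree $d_\epsilon$ rather than the classical degree $d$. The explicit distance $N_\epsilon$ of Theorem \ref{epsilon-equiv} separates the Riemannian regime $d_\epsilon(x,y)\lesssim\epsilon$ from the subRiemannian regime $d_\epsilon(x,y)\gtrsim\epsilon$ and makes the cancellation quantitative in both regimes, extending the analysis of \cite{cittimanfredini:uniform} from the stationary subLaplacian to the parabolic non-divergence setting.
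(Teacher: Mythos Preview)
Your approach via the Levi parametrix expansion is genuinely different from the paper's, and the difference matters precisely at the obstacle you yourself flag.

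The paper does \emph{not} freeze at a pole and iterate a convolution series. Instead, after the Rothschild--Stein lifting to free vector fields $\tilde X_1,\ldots,\tilde X_m$ on $\R^\nu$, it performs a \emph{second} lifting: it introduces a new copy $\hat Z_1,\ldots,\hat Z_\nu$ of the free structure in fresh variables and forms on $\R^\nu\times\R^\nu$ the horizontal frame
\[
(\tilde X_1,\ldots,\tilde X_m,\ \hat Z_1,\ldots,\hat Z_m,\ \tilde X_{m+1}^\epsilon+\hat Z_{m+1},\ldots,\tilde X_\nu^\epsilon+\hat Z_\nu).
\]
The point is that the linear change of frame $T_\epsilon:\bar X_i^\epsilon\mapsto\bar X_i^0$ on the Lie algebra induces, via the exponential map, a volume-preserving diffeomorphism $\bar F_\epsilon$ under which the lifted operator $\bar L_{\epsilon,A}$ is pulled back to the $\epsilon$-\emph{independent} operator $\bar L_{0,A}$. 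The Gaussian bounds for $\bar\Gamma_{0,A}$ are then classical, and those for $\bar\Gamma_{\epsilon,A}$ follow by pushing forward; no iteration is needed and the divergent structure constants never enter. One then descends to $\Gamma_{\epsilon,A}$ by integrating out the added variables, using only the coarse comparison $\bar d_\epsilon\ge \bar d_0-C_0$ between the lifted distances.

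What this buys relative to your proposal: the technical obstacle you correctly identify---that the commutator coefficients $c_{jk}^\ell$ blow up as $\epsilon\to 0$, so the Rothschild--Stein remainder in the freezing step is not a priori uniformly controlled---is completely sidestepped. Your sketch asserts that the divergent coefficients are always multiplied by compensating $\epsilon$-powers when organized by the weighted degree $d_\epsilon$, and this is plausible, but carrying it through the full Levi iteration (and through the derivative estimates \eqref{Xgamma}, \eqref{AmenA}) would require substantial work beyond what is in \cite{cittimanfredini:uniform}. The paper's second lifting trades that analysis for an algebraic change of variables. Your convergence argument via Arzel\`a--Ascoli and uniqueness is also more indirect than the paper's, which gets pointwise convergence immediately from $\bar\Gamma_{\epsilon,A}=\bar\Gamma_{0,A}\circ(\bar F_\epsilon\times\bar F_\epsilon)$ and dominated convergence from the distance comparison.
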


Our main contribution is  that all the 
constants are independent of $\e$. The proof of this assertion is based on a lifting procedure, which 
allows to express the fundamental solution of the operator $L_{A, \e}$ in terms 
of the fundamental solution of a new operator ${\bar L}_{A}$ independent of $\e$. 
The lifting procedure is composed by a first step in which we apply 
the delicate Rothschild and Stein lifting technique \cite{Roth:Stein}. 
After that, when the vector fields are free up to a specific step, 
we apply a second lifting which has been introduced in \cite{CiMa-F}, 
where the time independent case was studied, and  
from \cite{MR3108875} where the Carnot group setting is considered.

The simplest example of such an equation is the Heat equation associated to the Kohn Laplacian in the Heisenberg group, $\p_t - X_1^2 -X_2^2$, where the  vector fields $X_1$ and $X_2$ have been expressed on coordinates in Example \ref{heisenberg-ex}. In order to present  our approach we will give an outline of the proof in this special setting. 

\begin{ex} Denote by $(x_1, x_2, x_3)$  points of $\R^3$, let  $X_1, X_2, X_3$ be the vector fields defined in Example \ref{heisenberg-ex}, and let $I$ denote the identity matrix:
Consider the parabolic  operator $$L_{\e, I} = - \partial_t + X_1^2 + X_2 ^2 +  \e^2 X_3^2,$$
and note that it becomes degenerate parabolic as $\e\to 0$. Let $d_\e$ denote the Carnot-Caratheodory distance associated to the distribution $X_1,X_2, \e X_3$.

In order to handle such degeneracy we introduce new variables $(z_1, z_2, z_3)$ and a new set of vector fields replicating the same structure of the initial ones, i.e., 
$$\hat Z_1= \partial_{z_1} + z_2 \partial_{z_3}, \hat Z_2= \partial_{z_2} - z_1 \partial_{z_3}, \hat Z_3 = \partial_{z_3}$$
with $(x_1,x_2,x_3, z_1,z_2,z_3)\in \Hone \times \Hone$.
The next step consists in lifting $L_{\e,I}$ to an operator 
$$ \bar L_\e = \partial_t + X_1^2 + X_2 ^2 +  Z_1^2 + Z_2^2 + (Z_3 + \e X_3)^2 ,$$  defined on  $\Hone\times \Hone$, and denote by
$\bar \Gamma_\epsilon$ its  fundamental solution. Let $\bar d_\e$ denote the Carnot-Caratheodory distance generated by $X_1,X_2,Z_1,Z_2, (Z_3+\e X_3)$ and arguing as in \eqref{more-equivalence} note  that $\bar d_\e ((x,z), (y,z))\ge d_\e (x,y)-C_0$, for some constant $C_0$ independent of $\e$.
Consider the change of variables on the Lie algebra  of   $\Hone \times \Hone$,
$$X_i\rightarrow X_i, Z_i\rightarrow Z_i, \text{ for }i=1,2, Z_3 + \e X_3\rightarrow Z_3. $$ Note that the Jacobian of such change of variables does not depend on $\e$ and that it reduces the  operator $\bar L_\e$ to 
$$ \bar L = \partial_t + X_1^2 + X_2 ^2 +  Z_1^2 + Z_2^2 + Z_3^2 $$ 
whose  fundamental solution we denote by $\bar \Gamma$. 
Note that this operator is parabolic with respect to the vector fields $Z_i$ and degenerate parabolic with respect to the vector fields $X_i$. 
Is is clear that the operator $ \bar L $ is independent of $\e$,  and consequently its fundamental solution $ \bar \Gamma$ satisfies standard Gaussian estimates with constants independent of $\e$
$$\bar \Gamma(x,t)\le C_\Lambda\frac{e^{-\frac{\bar d (x,0)^2}{C_\Lambda t}}} {|\bar B (0, \sqrt{t})|},
$$
where $\bar d$ denotes the Carnot-Caratheodory distance in $\Hone\times \Hone$ generated by the distribution of vector fields $X_1,X_2,Z_1,Z_2,Z_3$.
Changing back to the original variable we see that also $\bar\Gamma_\e$ satisfies 
analogous estimates with the same constants, with the distance $\bar d$ replaced by the   distance $\bar d_\e$ naturally associated to the operator $\bar L_\e$. 
Finally, integrating with respect to the added variable $(z_1, z_2, z_3)$, we obtain an uniform bound for the fundamental solution of the operator $L_{\e,I}$ in terms of the distance $d_\e$.
\end{ex}

\subsection{The Rothschild-Stein freezing and lifting theorems}
Let us first recall a local lifting procedure introduced by Rothschild and Stein in  \cite{Roth:Stein} 
which, starting from a family $(X_i)_{i=1, \cdots m}$ 
of H\"ormander type vector fields of step $s$ in a neighborhood of $\R^n$, leads  to the construction of 
a new family of vector fields which are free, and of H\"ormander type with the same step $s$, in a neighborhood of a larger space. The projection of the new free vector fields on $\R^n$ yields the original vector fields, and that is why they are called {\it liftings}.
 
Let us start with some definitions:
\begin{dfn}\label{D:free}
Denote by $n_{m,s}$ the dimension (as a vector space) of the free
nilpotent Lie algebra with $m$ generators and step $s$. Let  $X_1,\ldots,X_m$ be  a set
of smooth vector fields defined  in an open neighborhood of a
point $x_0 \in \R^n$, and let 
$$V^{(s)} = span \{X^{(1)}, \cdots, X^{(r)}\},$$
where the sets $X^{j}$ are as defined in (\ref{NSWspaces}). 
We shall
say\index{free vector fields} that $X_1,\ldots,X_m$ are {\it free up
  to step $s$} if for any $1\le r\le s$ we have $n_{m,s}= dim(V^{(s)})$.
\end{dfn}

If a point $x_0 \in R^n$ is fixed, the lifting procedure of Rothschild-Stein locally introduces new variables $\tilde z $ and new vector fields $(\tilde Z_i)$ expressed in terms of the new variables such that in a neighborhood $U$ of  $x_0$ the vector fields 
$\tilde X_i = (X_i +  \tilde Z_i)_{i=1,\ldots,
m}$ are free at step $s$. More precisely, one has  \cite[Theorem 4]{Roth:Stein} 

\begin{thrm}\label{RS-lift}
Let $X_1,\ldots,X_m$ be a system of smooth vector fields,
satisfying (\ref{Hor}) in an open set $U\subset \R^n$. For any $x \in
U$ there exists a connected open neighborhood of the origin  $V\subset
\R^{\nu- n}$, and  smooth functions $\lambda_{ij}(x,\tilde z)$, with
$x\in R^n$ and $\tilde z=(z_{n+1},\ldots,z_{\nu})\in V$, defined in a
neighborhood $\tilde{U}$ of $\tilde{x}=(x,0) \in U\times V \subset \R^\nu$,
such that the vector fields $\tilde{X}_1,\ldots,\tilde{X}_m$ given
by
$$\tilde{X}_i=X_i +  \tilde Z_i, \quad \tilde Z_i = \sum_{j=n+1}^{\nu} \lambda_{ij}(x,\tilde z)\p_{z_j}$$
are free up to step $r$ at every point in $\tilde{U}$.
\end{thrm}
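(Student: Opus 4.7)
The plan is to follow Rothschild and Stein's original argument: the free nilpotent Lie group $F=F_{m,s}$ of step $s$ on $m$ generators serves as a universal model into which the given H\"ormander system embeds after adjoining $\nu-n$ auxiliary coordinates that account for the ``excess'' dimension $\nu-n=n_{m,s}-n$ of the free Lie algebra over $\R^n$. To set this up, let $\mathfrak{f}$ denote the free nilpotent Lie algebra of step $s$ on generators $W_1,\ldots,W_m$, fix a Hall basis $W_1,\ldots,W_\nu$ with $d(k)\le s$ the weight of $W_k$, and let $\tilde W_1,\ldots,\tilde W_\nu$ be the corresponding left-invariant vector fields on $F$. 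In coordinates of the second kind $\exp(u_1 W_1)\cdots\exp(u_\nu W_\nu)$ on $F$, the polynomial Baker--Campbell--Hausdorff formula yields explicit expressions $\tilde W_i=\p_{u_i}+\sum_{k>i}\mu_{ik}(u)\p_{u_k}$ with smooth $\mu_{ik}$.

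Next I would match these universal objects to the given data. Using H\"ormander's condition at step $s$ and the NSW selection recalled in \eqref{NSWspaces}--\eqref{bestI}, extract $n$ Hall basis indices $i_1<\cdots<i_n$ such that the corresponding iterated commutators $X_{W_{i_k}}(x_0)$ form a basis of $T_{x_0}\R^n$, and let $j_{n+1}<\cdots<j_\nu$ be the complementary indices. Introduce new variables $\tilde z=(z_{n+1},\ldots,z_\nu)$ placed in the positions of the complementary Hall basis elements, and let $\tilde U\subset\R^\nu$ be a neighborhood of $(x_0,0)$. The lift is then defined by requiring the map $\tilde\Xi:\tilde U\to F$ that interleaves the exponentials $\exp(x_kX_{W_{i_k}})$ with $\exp(z_j\tilde W_j)$ in the same order as the Hall basis to intertwine $\tilde X_i$ with $\tilde W_i$; equivalently, $\tilde X_i$ is the unique vector field on $\tilde U$ whose image under the projection $(x,\tilde z)\mapsto x$ is $X_i$ and whose components in the $\tilde z$-directions are dictated by the BCH coefficients appearing in $\tilde W_i$. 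This forces the decomposition $\tilde X_i=X_i+\tilde Z_i$ with $\tilde Z_i=\sum_{j=n+1}^\nu\lambda_{ij}(x,\tilde z)\p_{z_j}$, the smoothness of the $\lambda_{ij}$ being inherited from the smoothness of the flow of $X_1,\ldots,X_m$ together with the polynomiality of BCH.

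Freeness at every point of $\tilde U$ then follows by a dimension count: commutators of $\tilde X_1,\ldots,\tilde X_m$ up to order $s$ correspond under $\tilde\Xi$ to commutators of $\tilde W_1,\ldots,\tilde W_m$, which span $TF$ at every point by construction of the free nilpotent Lie group; since $\tilde\Xi$ is a local diffeomorphism, $\dim V^{(s)}=\nu=n_{m,s}$ throughout $\tilde U$. The main obstacle is making the interleaving construction of $\tilde\Xi$ sufficiently precise that the pullback of $\tilde W_i$ has no horizontal component beyond $X_i$ itself, so that the $\tilde Z_i$ truly act only on the new variables. This relies on a careful ordering of the exponentials in $\tilde\Xi$ aligned with the Hall basis together with the explicit polynomial structure of BCH, and constitutes the technically delicate core of Rothschild and Stein's original argument.
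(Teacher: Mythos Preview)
The paper does not give its own proof of this theorem: it is quoted verbatim as \cite[Theorem 4]{Roth:Stein} and used as a black box in the subsequent lifting construction. So there is no argument in the paper to compare your proposal against.

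As a sketch of the original Rothschild--Stein proof your outline captures the right architecture---model the lifted system on the free nilpotent group $F_{m,s}$, add $\nu-n$ new variables indexed by the Hall basis elements not already realized by commutators of the $X_i$ at $x_0$, and read off the $\lambda_{ij}$ from BCH---but a couple of points deserve comment. First, invoking the NSW selection \eqref{bestI} is anachronistic and unnecessary here: Rothschild and Stein's construction predates \cite{NSW} and proceeds directly from the Hall basis and the rank condition, not from any maximality criterion for $|\lambda_I|r^{d(I)}$. Second, the sentence ``$\tilde X_i$ is the unique vector field on $\tilde U$ whose image under the projection $(x,\tilde z)\mapsto x$ is $X_i$ and whose components in the $\tilde z$-directions are dictated by the BCH coefficients'' is doing all the work and is not obviously well-posed as stated: the actual construction in \cite{Roth:Stein} builds the $\lambda_{ij}$ inductively on the weight $d(j)$, and one must check at each stage that the commutator relations in $\mathfrak f$ are reproduced modulo higher weight. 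You acknowledge this yourself in the final paragraph, so the proposal is really an informed pointer to the literature rather than a self-contained proof---which, in fairness, is exactly what the paper itself provides.
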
 \index{Rothschild-Stein lifting theorem}

\begin{rmrk} 
In the literature the lifting procedure described above is often coupled with another key result introduced in \cite{Roth:Stein}, a nilpotent approximation which is akin to the classical {\it freezing} technique for elliptic operators. Let us explicitly note that in section 5.3  we need only to apply the lifting theorem mentioned above, and not the freezing procedure.  In particular, in the example of the so called Grushin vector fields
$$X_3=\p_{x_1} \text{ and }X_4=x_1 \p_{x_2}$$
they would need to be lifted through this procedure to the Heisenberg group structure 
$$X_3=\p_{x_1} \text{ and }X_4=\p_{x_3}+x_1 \p_{x_2}.$$
On the other hand the vector fields 
$$X_1=\cos\theta \p_{x_1}+\sin \theta \p_{x_2}  \quad \text{and} \quad  X_2=\p_{\theta}$$
will be unchanged by the lifting process, since they are already  free up to step 2. 
\end{rmrk} 

Later on, In section 5.4 we will apply  Rothschild and Stein's freezing theorem to a family of vector fields 
$X_1,\ldots,X_m$ free up to step $r$. This will allow 
 to approximate a 
given family of vector fields with homogeneous ones. 
Note that in this case the function $\Phi$ in (\ref{phi}) is independent of $v$ and 
its expression reduces to: 
\begin{equation}\label{phi0}
\Phi_{x}(u) = exp\Big(\sum_{i}   u_i  X_{i} \Big)(x).\end{equation}
The pertinent theorem from \cite{Roth:Stein} is the following,

\begin{thrm}\label{rs-5}
Let  $X_1,\ldots,X_m$ be a family of vector fields 
are free up to rank $r$ at every point. 
Then for every $x$ there exists a neighborhood $V$ of $x$ and a neighborhood $U$
of the identity in $\bG_{m,r}$, such that:
\begin{enumerate}
\item[(a)] the map $\Phi_{x}:
U\to V $ 
is a diffeomorphism onto its image. We will call $\Theta_x$ its inverse map
\item[(b)]we have
\begin{equation}\label{split}
d \Theta_x (X_i)= Y_i+R_i, \ \ i=1,\ldots,m
\end{equation}
where $R_i$ is a vector field  of local degree less or equal than
zero, depending smoothly on $x.$
\end{enumerate}
Hence the operator $R_i$ will represented in the form:
$$R_i= \sum_{jh} \sigma_{i}(u) X_i,$$
where $\sigma$ is an homogeneous polynomial of degree  $d(X_i)-1.$
\end{thrm}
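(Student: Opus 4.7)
The plan is to establish part (a) by the inverse function theorem applied at $u=0$, and then to deduce the structural formula in part (b) by directly computing the pull-back of $X_i$ through $\Theta_x$ by means of the Baker--Campbell--Hausdorff formula on both $\bG_{m,r}$ and the ambient Lie algebra generated by $X_1,\ldots,X_m$.

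For (a), the key point is that freeness up to step $r$ forces the ambient dimension $n$ to coincide with $\dim\bG_{m,r}=n_{m,r}$, and that the iterated brackets of $X_1,\ldots,X_m$ up to order $r$ are linearly independent at $x$. Under the canonical identification of a basis of $\bg_{m,r}$ with the corresponding iterated brackets of $X_1,\ldots,X_m$, the differential $d\Phi_x|_{u=0}$ maps each basis vector of $T_e\bG_{m,r}$ to the corresponding iterated bracket of the $X_j$'s at $x$, so it is a linear bijection. The inverse function theorem then produces neighborhoods $U$ of the identity in $\bG_{m,r}$ and $V=\Phi_x(U)\subset\R^n$ on which $\Phi_x$ is a diffeomorphism with smooth inverse $\Theta_x$, depending smoothly on $x$ because the Campbell--Hausdorff exponential depends smoothly on the generators.

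For (b), I would recall that in exponential coordinates on $\bG_{m,r}$ the left-invariant fields admit the explicit expression
\[
Y_i(u)=\p_{u_i}+\sum_{d(j)>1}q_{ij}(u)\,\p_{u_j},
\]
where $q_{ij}$ is a $\delta_\lambda$-homogeneous polynomial of degree $d(j)-1$ obtained from the BCH series, which terminates at step $r$ because the free nilpotent algebra is nilpotent of that step. On the ambient side, differentiating the defining identity $\Phi_x(u)=\exp(\sum_j u_j \tilde X_j)(x)$ (with $\tilde X_j$ the iterated bracket of the $X$'s corresponding to the basis element $Y_j$) along $X_i$ and applying the Duhamel-type expansion for the derivative of the exponential map yields an identical leading contribution $Y_i$, together with a remainder $R_i$ which collects exactly those terms in the (a priori infinite) ambient BCH expansion that originate from brackets of length strictly greater than $r$.

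It remains to verify that $R_i$ has local degree $\le 0$ in the Rothschild--Stein sense and admits the polynomial decomposition stated in the theorem. Because the $X$'s are free up to step $r$ and the dimensions match, every bracket of length $>r$ in the ambient algebra is a smooth combination of brackets of length $\le r$; expanding $R_i$ in the basis $\{Y_j\}$ and Taylor-expanding its coefficients at $u=0$, the terms that would match the polynomial part of $Y_i$ have already been subtracted, leaving coefficients $\sigma_{ij}(u)$ which, by the weighted homogeneity of the BCH correction terms, are polynomial with $\delta_\lambda$-degree precisely $d(j)-1$; smoothness in $x$ is inherited from the smoothness of the structure constants of the ambient algebra. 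The main obstacle I anticipate lies precisely in this last bookkeeping step: tracking $\delta_\lambda$-homogeneities along an ambient BCH expansion that does not truncate, and verifying that every contribution not already captured by the polynomial part of $Y_i$ falls into the predicted weighted degree, uniformly in a neighborhood of $x$.
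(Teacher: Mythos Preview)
The paper does not supply its own proof of this statement: Theorem~\ref{rs-5} is quoted verbatim from Rothschild and Stein \cite{Roth:Stein} (it is introduced with ``The pertinent theorem from \cite{Roth:Stein} is the following'') and is used as a black box in the subsequent sections. So there is no paper-proof to compare your proposal against.

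That said, your sketch is the standard line of argument for the Rothschild--Stein osculation theorem. Part (a) is exactly right: freeness up to step $r$ forces $n=n_{m,r}$, the differential of $\Phi_x$ at $0$ sends the canonical basis of $\bg_{m,r}$ to the iterated brackets of the $X_j$ at $x$, and these are linearly independent by freeness, so the inverse function theorem applies. For part (b), your idea of comparing the BCH expansion on $\bG_{m,r}$ (which truncates) with the ambient one (which does not) and collecting the overflow into $R_i$ is precisely the mechanism in \cite{Roth:Stein}. Your honest flag about the bookkeeping is well placed: the actual work in Rothschild--Stein lies in organizing the ambient expansion so that each surviving term, after rewriting brackets of length $>r$ as smooth combinations of shorter ones, contributes a coefficient of the correct weighted degree. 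This is handled in \cite{Roth:Stein} by an inductive argument on the filtration rather than a direct Taylor expansion, and that induction is what makes the ``local degree $\le 0$'' conclusion clean. If you want a self-contained write-up, that is the step to formalize carefully; the rest of your outline is sound.
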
 \index{Rothschild-Stein osculating theorem}

\subsection{A lifting procedure uniform in $\epsilon$}

So far we have started with a set of H\"ormander vector fields $X_1,...,X_m$ in $\Om\subset \R^n$ and we have lifted them through Theorem \ref{RS-lift} to a set $\tilde X_1,...,\tilde X_m$ of H\"ormander vector fields that are free up to a step $s$ in a neighborhood $\tilde \Om\subset \R^\nu$. Next, we perform  a second lifting inspired by the work in \cite{MR3108875}. 
 We will consider the augmented space   $ \R^{\nu}\times \R^\nu$  defined in terms of 
 $\nu$ new coordinates $\hat z=(\hat z_1,...,\hat z_\nu)$. Set $z=(\tilde z, \hat z)$ and   denote points of $\R^{\nu}\times \R^{\nu}$ by $\bar x =(x, \tilde z, \hat z) =(x,z)$.  Denote
by $\hat Z_1,\ldots, \hat Z_m$ a family of vector fields free up to step s.  $\tilde X_1,...,\tilde X_m$, i.e. a family of vector fields free of step $s$ in the variables $\hat z$, and let 
$$\hat Z_{m+1}, \cdots \hat Z_\nu$$
denote the complete set of their commutators, as we did in \eqref{NSWspaces}.
Note that the subRiemannian structure generated by $\hat Z_1,...,\hat Z_m$ coincides with the structure generated by the family $\tilde  X_i$, but are defined in terms of new variables $\hat z$.

For every $\e\in [0,1)$ consider a 
sub-Riemannian structure determined by the choice of horizontal vector fields given by
\begin{equation}\label{frame2}
(\bar X_1^\e, \cdots \bar X^\e_{m + \nu}) = (\tilde X_1,\ldots, \tilde X_m, \hat Z_1,\ldots, \hat Z_m, \tilde X_{m+1}^\e+\hat Z_{m+1},\ldots, \tilde X_{\nu}^\e+\hat Z_{\nu}).\end{equation}

Since the space is free up to step $r$ the function $\Phi$ in (\ref{phi}) is independent of $v$ and 
its expression reduces to: 
\begin{equation}\label{phi}
\Phi_{\e, \bar x}(u) = exp\Big(\sum_{i}   u^\e_i \bar X^\e_{i} \Big)(\bar x).\end{equation}
In the sequel, when we will need to explicitly indicate the vector fields defining $\Phi$ we 
will also use the notation: 
\begin{equation}\label{phivec}
\Phi_{\e, \bar x, \bar X^\e}(u) = \Phi_{\e, \bar x}(u), \text{ and } \Phi_{\e, \bar x, \bar X^\e i}(u) 
\text{ its components,}
\end{equation}
and analogous notations will be used for the inverse map $\Theta_{\e, \bar x, \bar X^\e}$

For every $\e> 0$ and  $\bar x, \bar x_0$, in view of Theorem \ref{epsilon-equiv}  the associated ball box distances reduce to: 

$$\bar d_\e(\bar x, \bar x_0)= \sum_{i=1}^{2m}|u^\e_i|  + \sum_{i=2m+1}^{\nu+m}  \min(|u^\e_i|, |u^\e_i|^{1/d(i)} )+ \sum_{i=\nu+m+1}^{2\nu}| u^\e_i|^{1/d(i )}$$
For $\e=0$ and  $\bar x, \bar x_0 $  we have
$$\bar d_0(\bar x, \bar x_0)= \sum_{i=1}^{n}  | u^0_i|^{1/d(i )}$$

\subsection{Proof of the stability result}

The sub-Laplacian/heat operator associated to this structure is 
$$
\bar L_{\e,A}=\p_t -\sum_{i=1}^{m+ \nu} \bar a_{ij} \bar X^\e_{i}\bar X^\e_{j},$$
where 
$$\bar A= A \oplus \lambda I$$
and $I$ is the identity matrix of dimension $\nu \times \nu$.  
We denote by    $\bar \Gamma_{\e, A}$ the heat kernels of the corresponding heat operators, and prove a 
lemma analogous to lemma \ref{uniform heat kernel estimates} for the lifted operator:

\begin{lemma}\label{convergencelem1}
The fundamental solution $\bar\Gamma_{\e, A}$ of the operator 
 $\bar L_{\e, A}$, is a kernel with local uniform exponential 
decay of order $2$  with respect to $\epsilon>0$ 
and $A\in M^\e_{m+\nu,\Lambda}$, according to definition (\ref{defkernelestimates}). 
Hence it belongs to the set  $\mathcal{E}(2, \bar d_\e, M^\e_{m+\nu,\Lambda}) $. 
Moreover, as $\e\to 0$ one has
\begin{equation}\label{barGetobarG}{ X}^\e_{i_1}\cdots { X}^\e_{i_k} \p_t^s \bar \Gamma_{\e, A}\to {X}_{i_1}\cdots {X}_{i_k}\p_t^s \bar\Gamma_{A}\end{equation}
uniformly on compact sets, in a dominated way on all $\bar G$.
\end{lemma}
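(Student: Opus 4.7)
The plan is to follow the Heisenberg paradigm sketched in the example after Proposition \ref{uniform heat kernel estimates} and to extend the Carnot-group argument of \cite{MR3108875}: reduce $\bar L_{\e,A}$, via a change of frame on the Lie algebra of $\R^\nu\times\R^\nu$ whose Jacobian is independent of $\e$, to an $\e$-independent H\"ormander heat operator $\bar L_A$, apply the classical Gaussian bounds there, and transfer them back to $\bar\Gamma_{\e,A}$. Explicitly, because $\tilde X_i^\e$ acts only on the $(x,\tilde z)$ coordinates while $\hat Z_i$ acts only on $\hat z$, the two families commute. Since $\bar A=A\oplus\lambda I_\nu$ is block-diagonal, the operator splits as
\begin{equation*}
\bar L_{\e,A}=\p_t-\sum_{i,j=1}^m a_{ij}\tilde X_i\tilde X_j-\lambda\sum_{i=1}^m\hat Z_i^2-\lambda\sum_{i=m+1}^\nu(\tilde X_i^\e+\hat Z_i)^2,
\end{equation*}
and replacing each frame element $\tilde X_i^\e+\hat Z_i$ by $\hat Z_i$ produces the $\e$-independent reduced operator $\bar L_A=\p_t-\sum_{i,j=1}^m a_{ij}\tilde X_i\tilde X_j-\lambda\sum_{i=1}^\nu\hat Z_i^2$.

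Next I will apply the classical H\"ormander-type estimates to the reduced operator. The frame $\{\tilde X_1,\ldots,\tilde X_m,\hat Z_1,\ldots,\hat Z_m\}$ is free up to step $s$ on $\R^{2\nu}$ by the very construction of the lifting, and $\bar L_A$ is a constant-coefficient sub-Laplacian satisfying \eqref{unifellip}; the Jerison--S\'anchez-Calle estimates \cite{MR865430}, the Nagel--Stein--Wainger doubling \cite{NSW}, and the H\"older coefficient results of \cite{BONFI,BLU} then yield \eqref{gamma}--\eqref{AmenA} for $\bar\Gamma_A$ with constants depending only on $\Lambda$ and on the fixed frame. Since the change of frame is linear with unit Jacobian (independent of $\e$), $\bar\Gamma_{\e,A}$ is obtained from $\bar\Gamma_A$ by composition with this transformation. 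The geometric factors $\bar d_\e$ and $|\bar B_\e(\bar x,\sqrt t)|$ on the right-hand side of \eqref{gamma} are matched to their reduced-frame counterparts through Proposition \ref{indipendenza}, Theorem \ref{MAINNSWeps} and Theorem \ref{epsilon-equiv}, applied now to the lifted frame $\bar X^\e$ on $\R^{2\nu}$; these give ball-volume and quasi-norm equivalences uniform in $\e\in[0,\bar\e]$ and in $A\in M^\e_{m+\nu,\Lambda}$. The derivative estimates \eqref{Xgamma} and the H\"older dependence on $A$ in \eqref{AmenA} pass through the same transfer since differentiation along $\bar X^\e_i$ becomes a fixed linear combination of differentiations along the reduced frame. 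The convergence \eqref{barGetobarG} then follows from the uniform estimates by a standard Arzel\`a--Ascoli argument: equicontinuity on compact sets away from the diagonal yields subsequential local uniform limits, which solve $\bar L_{0,A}=0$ distributionally with delta source and satisfy the Gaussian majorant, so by uniqueness they coincide with $\bar\Gamma_A$; dominated convergence, using the Gaussian upper bound as majorant, supplies the $L^1_{loc}$ assertion.

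The main obstacle will be the rigorous implementation of the frame change outside the Carnot-group framework of \cite{MR3108875}. In the group case the reduction comes from an algebraic translation along the group, so the cross terms are exactly absorbed; here, with the Rothschild--Stein lifts $\tilde X_i$ whose bracket relations carry non-constant structure coefficients, one must verify carefully that the cross terms $2\tilde X_i^\e\hat Z_i$ appearing in the expansion of $(\tilde X_i^\e+\hat Z_i)^2$ are genuinely absorbed into $\bar L_A$ by the frame change (and not merely perturbatively), and that the uniform structure-constant bounds furnished by Proposition \ref{indipendenza} are strong enough to compensate for the lack of a global group structure on the lifted space.
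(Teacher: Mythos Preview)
Your strategy coincides with the paper's: reduce $\bar L_{\e,A}$ to the $\e$-independent operator $\bar L_{0,A}$ (your $\bar L_A$) via a frame change on the lifted space, invoke the known Gaussian bounds there, and transfer back. The two implementations differ in how the transfer is made precise. The paper does not appeal to the uniform Nagel--Stein--Wainger machinery (Proposition~\ref{indipendenza}, Theorem~\ref{epsilon-equiv}) to obtain merely an \emph{equivalence} of the geometric quantities; instead it constructs an explicit volume-preserving diffeomorphism $\bar F_\e$ of the base via exponential coordinates, $\bar F_\e(\bar x)=\exp\big(\sum_i u_i\,\bar X_i^0\big)(\bar z)$ where $u$ are the exponential coordinates of $\bar x$ in the frame $\bar X^\e$, and obtains the exact identities $\bar d_\e(\bar x,\bar y)=\bar d_0(\bar F_\e(\bar x),\bar F_\e(\bar y))$ and $\bar\Gamma_{\e,A}(\bar x,\bar y,t)=\bar\Gamma_{0,A}(\bar F_\e(\bar x),\bar F_\e(\bar y),t)$. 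This resolves your stated ``main obstacle'' directly: the cross terms are not absorbed perturbatively but exactly, because $\bar F_\e$ pushes $\bar X_i^\e$ to $\bar X_i^0$ by construction. Consequently the convergence \eqref{barGetobarG} is read off pointwise from the identity (since $\bar F_\e\to\mathrm{id}$), and the dominated majorant comes from the explicit comparison $|\bar d_\e-\bar d_0|\le C_0$ obtained by writing both distances in the common exponential coordinates; no Arzel\`a--Ascoli or uniqueness argument is needed. Your route via compactness and uniqueness is correct in principle, but note that to obtain a majorant uniform in $\e$ for the dominated convergence you would still need precisely this distance comparison, which you have not stated.
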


\begin{proof}
The result for the limit operator $\bar L_{0,A} $ is well known and contained for example in \cite{BBLU}. 
Hence we only have to estimate the fundamental solution of the operators $\bar L_{\e,A}$ in terms of the one of 
$\bar L_{0,A} $. In order to do so, we first define a change of variable on the Lie algebra: 
\begin{equation}\label{te}T_\e (\bar X_i^\e)=\bar X^0_i\text{ for } i=1,\ldots,\nu +m\end{equation}
Then from a fixed point $\bar z$ we apply the exponential map to induce on the Lie group 
a volume preserving change of variables. 
Using the notation introduced in (\ref{phivec}),  we will denote
$$\bar F_{\e, \bar z} :\bar G\to \bar  G, \quad \bar F_\e (\bar x)= \exp( \Phi^{-1}_{\e, \bar z, T_\e(\bar X^0) i} (\bar x) \bar X^0_i)(\bar z) $$
Since the distances are defined in terms of the exponential maps, this change of variables 
induces a relation between the distances $\bar d_0$ and $\bar d_\e$: 
\begin{equation}\label{distances}\bar d_\e (\bar x, \bar x_0)=\bar d_0( \bar F_\e (\bar x), \bar  F_\e(\bar x_0)).\end{equation}
Analogously we also have
\begin{equation}\label{changegamma}\bar \Gamma_{\e, A} (\bar x, \bar y, t)=\bar \Gamma_{0,A}(\bar F_\e(\bar x), \bar F_\e(\bar y), t),\end{equation}

Hence assertions (\ref{gamma}) follow from the estimates of $\bar\Gamma_{0,A}$ contained for  instance in \cite{MR865430}. Indeed the second inequality can be established as follows: 
$$\bar \Gamma_{\e, A} (\bar x, \bar y, t)=\bar \Gamma_{0,A}(\bar F_\e(\bar x), \bar F_\e(\bar y), t) \leq
 C_\Lambda\frac{e^{-\frac{\bar  d_0(\bar F_\e(\bar x), \bar F_\e(\bar y))^2}{C_\Lambda t}}} {|\bar  B_0 (\bar F_\e(\bar x), \sqrt{t})|}= 
 C_\Lambda\frac{e^{-\frac{\bar  d_\e(\bar x, \bar y)^2}{C_\Lambda t}}} {|\bar  B_\e (\bar x, \sqrt{t})|}.$$

The proof of the first inequality in (\ref{gamma}) and (\ref{Xgamma}) is analogous, 
while (\ref{AmenA}) follows from the estimates of the fundamental solution contained in (\cite{BBLU}).

The pointwise convergence (\ref{barGetobarG}) is also an immediate consequence of  \eqref{distances} and \eqref{changegamma}.
In order to prove the dominated convergence result we need to relate the distances $\bar d_0$ and $\bar d_\e$. 
On the other side, the change of variable (\ref{te}) allows to express exponential coordinates 
$u_i^\e,$ in terms of $u_i^0$ as follows:
 $$\bar d_\e( \bar x,  \bar x_0)= \sum_{i=1}^{2m}|u_i^0|  + \sum_{i=2m+1}^\nu \Big( |u_i^0- \e w_{i+\nu}^0|^{1/d(i)} + \min(|u_i^0|, |u_i^0|^{1/d(i)} ) \Big)$$
so that for all\footnote{This estimate indicates the well known fact that at large scale the Riemannian approximating distances are equivalent to the sub-Riemannian distance}
  $\bar x, \bar x_0\in \bar G$ \begin{equation}
\label{tocheck}
\bar d_0(\bar x, \bar x_0) - C_0\leq \bar d_\e( \bar x, \bar x_0)\leq \bar d_0(\bar x, \bar x_0)+ C_0\end{equation}
where $C_0$ is independent of $\e$. The latter and (\ref{Xgamma}) imply that there is a constant $\tilde C_{s,k}$ independent of $\e$ such that 
$$|(\p_t^s { X}^\e_{i_1}\cdots{ X}^\e_{i_k} \bar \Gamma_{\e, A})(\bar x,\bar y,t)| \le \tilde  C_{s,k} t^{-s-k/2} \frac{e^{-\frac{\bar d_0(\bar x,\bar y)^2}{C_\Lambda t}}} {|\bar B_0(\bar x, \sqrt{t})|}$$
and this imply dominated convergence with respect to the $\e$ variable.
\end{proof}
 
\bigskip

In order to be able to conclude the proof of Proposition \ref{uniform heat kernel estimates}, 
we need to study the relation between the fundamental solutions 
$\Gamma_A( x,y,t)$ and its lifting $\bar \Gamma _{0,A}((x, 0), (y, z), t) $, 
as well as the relation between 
$\Gamma_{\e A} (x,y, t)$and $\bar \Gamma_{\e, A} ((x, 0), (y, z), t) , $

\begin{rmrk}
We first note that for every $f\in C^\infty_0(\R^n \times R^+)$
$f$ can be identified with a $C^\infty$ and bounded function defined on  $\R^{n+ \nu} \times R^+$ and 
constant in the $z-$ variables. Hence 
$$L_{A}f = {\bar L}_{A}f, \quad L_{\e, A}f = {\bar L}_{\e, A}f, \quad$$
Consequently: 
$$f(x,t) = \int \int \Big(\int \bar \Gamma _{\e,A}((x, 0, s), (y, z, t)) dz \Big) L_{\e, A}f(y,s) dy ds $$
 From the definition of fundamental solution 
we  can deduce that   \begin{equation}\label{GAGE}\Gamma_A( x,y,t)=\int_G \bar \Gamma _{0,A}((x, 0), (y, z), t) dz, \quad  \text{ and } \quad\Gamma_{\e A} (x,y, t)=\int_G \bar \Gamma_{\e, A} ((x, 0), (y, z), t) dz, \end{equation}
for any $x\in G$  and $t>0$.

\end{rmrk}

\bigskip

We conclude this section with the proof of the main result Proposition \ref{uniform heat kernel estimates}.

\begin{proof}
In view of the previous remanrk and (global) dominated convergence of  the derivatives of $\bar \Gamma_{\e, A}$ to  the corresponding derivatives of $\bar \Gamma_{0,A}$ as $\e\to 0$, we deduce that  $$\int_G \bar \Gamma_{\e, A} ((x, 0), (y, z), t) dz \rightarrow \int_G \bar \Gamma_{0,A} (( x, 0),(y, z), t) dz$$ as   $\e\rightarrow  0$.
The Gaussian estimates of $ \Gamma_{\e, A}$ follow from the corresponding estimates on $\bar \Gamma_{\e, A}$ and the fact that in view of \eqref{tocheck}, \begin{equation}\label{more-equivalence}\bar d_\e((x, z), (x_0, z_0))\geq \bar d_0((x, z), (x_0, z_0)) - C_0\geq  d_0( x, x_0) +  d_0(z, z_0) - C_0\geq\end{equation}
$$\geq  d_\e(x, x_0) +  d_\e(z, z_0) -3 C_0$$
Indeed the latter shows that there exists a constant $C>0$ depending only on $G, \sigma_0$ such that for every $x\in G$,
$$\int_G e^{- \frac{d^2_\e((x, z), (x_0, z_0))}{t}}dz \leq  C e^{-\frac{d^2_\e(x, x_0)}{t}}
 \int_G e^{- \frac{d^2_\e( z , z_0)}{t}}dz \leq  C e^{-\frac{d^2_\e(x, x_0)}{t}}.$$ 
The conclusion follows at once. 
\end{proof}

\subsection{Differential of the integral operator associated to $\Gamma_\e$}

In this subsection we will show how to differentiate a functional $F$ expressed as follows:
$$F(f)(x, t) = \int \Gamma_{\e, A}(x,y,t) f(y, s) dy ds.$$

In order to do so, we will need to differentiate both with respect to $x$ and to $y$, 
so that we will denote $X_i^{\e, x}\Gamma_{\e, A}(x,y,t)$
the derivative  with respect to the variable $x$ and 
$X_i^{\e, y}\Gamma_{\e, A}(x,y,t)$
the derivative with respect to the variable $y$. 

Analogously, we will denote the derivative with the first variable of the lifted fundamental solution 
$$\bar X_i^{\e, \bar x}\bar \Gamma_{, A}((x, w), (y,z) ,t).$$
For $\e=0$, we will have by definition
$$\bar X_i^{0, \bar x}\bar \Gamma_{, A}((x, w), (y,z) ,t)= ( X_i^{0, x} + \tilde Z_i^w)\bar \Gamma_{, A}((x, w), (y,z) ,t).$$
The derivative with respect to the second variable will be denoted $\bar X_i^{0, \bar y}.$
If $\Gamma$ is the Euclidean heat kernel, there is a simple relation between the derivative with respect to 
the two variables, indeed in this case 
$\Gamma_{\e, A}(x,y,t)= \Gamma_{\e, A}(x-y,0,t)$, so that 
\begin{equation}X_i^{\e, x}\Gamma_{\e, A}(x,y,t)
= - X_i^{\e, y}\Gamma_{\e, A}(x,y,t).\end{equation}
Consequently for every function $f\in C^\infty_0$ 
$$\partial_{x_i}F(f)(x,t) = \int \Gamma_{\e, A}(x,y,t) \partial_{y_i}f(y) dy.$$ 

This is no more the case in general Lie groups, or for H\"ormander vector fields. 
However we will see that there is a relation between the two derivatives, 
which allows to prove the following:%
%
%
%
 %
%
%
\begin{prop}\label{uniformderivatives}
Assume that $f\in C^{\infty}_0(\Omega\times]0, T[)$ in an open set $\Omega\times]0, T[$. 
For every $x\in K\subset\subset \Omega$, for every $i=1, \cdots m$ 
there exists the derivative $X_i^\e F(f)(x,t) $. 
Precisely there exist kernels $ P_{\e, i,h}(x, y, t), R_{\e, i}(x, y, t) 
\in \mathcal{E}(2, d_\e, M^\e_{m, \Lambda}) $ such that 
$$X_i^{\e} F(f) = $$
$$=-\int 
\sum_{h=1}^m  X^{\e, y, *}_{h}  P_{\e, i,h }(x, y, t)    f(y) dy - \int 
R_{\e, i}(x,y, t)   f(y) dy.$$
(Let us note explicitly that the term $R_{\e, i,h}(x,y, t) $ plays the role of 
an error term). 
\end{prop}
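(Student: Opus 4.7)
The plan is to reduce the problem to the lifted, free setting, where the Rothschild--Stein osculating theorem locally identifies the vector fields $\bar X_i^\e$ with the left-invariant generators of a free nilpotent group, up to a remainder of nonpositive local degree. On that group, left-translation invariance yields an identity exchanging $x$- and $y$-derivatives of the heat kernel; transferred back to $\bar\Gamma_{\e,A}$ and integrated along the fibre as in \eqref{GAGE}, this produces the claimed decomposition of $X_i^{\e,x}\Gamma_{\e,A}$ into a ``divergence in $y$'' piece and a lower-order remainder. Differentiation under the integral sign defining $F(f)$ is justified by the uniform Gaussian bound \eqref{Xgamma}, the doubling property of Theorem \ref{Main-1}, and the compact support of $f$, and reduces the problem to analyzing $\bar X_i^{\e,\bar x}\bar\Gamma_{\e,A}$ on the lifted space where $\bar X^\e$ is free of step $s$.

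On the lifted space, Theorem \ref{rs-5} provides local canonical coordinates $\Theta_{\e,\bar x}$ with $d\Theta_{\e,\bar x}(\bar X_i^\e)=Y_i+R_i^\e$, where $Y_i$ are the left-invariant generators of the osculating free nilpotent group and $R_i^\e$ has local degree $\le 0$ with coefficients that are homogeneous polynomials in the canonical coordinates. Left-translation invariance of the group heat kernel gives an identity of the form
$$Y_i^{\bar x}\bar\Gamma_{0,A}(\bar x,\bar y,t)=-\sum_{h=1}^{m}\gamma_{ih}(\bar x,\bar y)\,Y_h^{\bar y,*}\bar\Gamma_{0,A}(\bar x,\bar y,t)+S_i(\bar x,\bar y,t),$$
where $\gamma_{ih}$ are smooth and bounded on compacts and $S_i$ collects derivatives along higher-degree $Y_j$ with $j>m$; by \eqref{Xgamma} every term in $S_i$ gains an additional factor of $\sqrt{t}$ and therefore belongs to the remainder class. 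Pulling this identity back through $\Theta_{\e,\bar x}$ and the volume-preserving change of variables $\bar F_\e$ of \eqref{changegamma}, and absorbing the contribution of $R_i^\e$ into the remainder (its degree condition together with \eqref{Xgamma} ensures $R_i^\e\bar\Gamma_{\e,A}\in\mathcal E(2,\bar d_\e,M^\e_{m+\nu,\Lambda})$), yields
$$\bar X_i^{\e,\bar x}\bar\Gamma_{\e,A}(\bar x,\bar y,t)=-\sum_{h=1}^{m}\bar X_h^{\e,\bar y,*}\bar P_{\e,i,h}(\bar x,\bar y,t)-\bar R_{\e,i}(\bar x,\bar y,t).$$
Fibre integration in $z$, as at the end of the proof of Proposition \ref{uniform heat kernel estimates}, together with \eqref{more-equivalence} (which factors a Gaussian in $d_\e(x,y)$ out of a Gaussian in $\bar d_\e$), gives the statement with $P_{\e,i,h}$ and $R_{\e,i}$ defined by integrating $\bar P_{\e,i,h}$ and $\bar R_{\e,i}$ over the fibre.

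The principal difficulty is making this entire transfer \emph{uniform} in $\e$: the map $\Theta_{\e,\bar x}$, the coefficients $\gamma_{ih}$, and the remainder $R_i^\e$ depend on $\e$ through the scaling \eqref{definefields}, and their coefficients could a priori blow up as $\e\to 0$. The resolution combines three ingredients already used in the proof of Proposition \ref{uniform heat kernel estimates}: the volume-preserving change of variables $T_\e$ of \eqref{te}, which converts $\e$-dependence in the vector fields into an $\e$-dependent but volume-preserving change of coordinates on the osculating group; the uniform ball--box equivalence of Theorem \ref{epsilon-equiv}, which ensures that polynomial weights in canonical coordinates are dominated by powers of $\bar d_\e$; and Lemma \ref{convergencelem1}, which provides uniform Gaussian bounds for all derivatives of $\bar\Gamma_{\e,A}$. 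Together these guarantee that all polynomial weights produced by $R_i^\e$ and $\gamma_{ih}$ are absorbed into the Gaussian factor $e^{-\bar d_\e(\bar x,\bar y)^2/(C_\Lambda t)}$, securing the uniform Gaussian class $\mathcal E(2,d_\e,M^\e_{m,\Lambda})$ for the final kernels $P_{\e,i,h}$ and $R_{\e,i}$.
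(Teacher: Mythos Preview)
Your approach is essentially the paper's: lift to the free setting, use a Rothschild--Stein identity to convert the $x$-derivative of the lifted kernel into $y$-derivatives plus a lower-order remainder, then integrate along the fibre via \eqref{GAGE}. The main difference is the \emph{order} in which you handle the $\e$-dependence. You apply the osculating theorem to the $\e$-dependent frame $\bar X_i^\e$, obtain $\e$-dependent canonical coordinates $\Theta_{\e,\bar x}$ and remainders $R_i^\e$, and then devote a separate argument to showing that all the resulting polynomial weights and coefficients remain uniformly controlled as $\e\to 0$. The paper instead applies the change of variables $\bar F_\e$ \emph{first}, writing $\bar\Gamma_{\e,A}(\bar x,\bar y,t)=\bar\Gamma_{0,A}(\bar F_\e(\bar x),\bar F_\e(\bar y),t)$, so that the derivative $X_i^\e$ becomes a derivative along the $\e$-independent frame $\bar X_i^0$ acting on $\bar\Gamma_{0,A}$. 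The Rothschild--Stein identity (cited from \cite{Roth:Stein}, p.~295) is then invoked \emph{once}, for the fixed free family $\bar X_i^0$, and produces polynomials $\bar p_{ih}$ and kernels $\bar P_{0,i,h}$, $\bar R_{0,i}$ that do not depend on $\e$ at all; the $\e$-dependence of $P_{\e,i,h}$ and $R_{\e,i}$ enters only through the volume-preserving substitution $\bar F_\e$ in the fibre integral. This makes the uniformity in $\e$ automatic rather than something to be checked, and avoids the need to track how the canonical coordinates and remainders in Theorem \ref{rs-5} behave along the degenerating family.

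Two smaller points worth noting. First, the identity you write as coming from ``left-translation invariance of the group heat kernel'' is in the paper a statement about free H\"ormander vector fields (the $\bar X_i^0$ are free but not literally the left-invariant generators of a Carnot group); the relevant formula is the one from \cite{Roth:Stein}, p.~295, expressing $\bar X_i^{0,\bar x}\bar\Gamma_{0,A}$ as $\sum_j(\bar X_j^{0,\bar y})^*$ acting on polynomial-weighted derivatives of $\bar\Gamma_{0,A}$, plus a zero-order remainder. Second, the paper makes explicit a step you leave implicit: when passing from $X_i^\e$ (acting only on $x\in\R^n$) to $\bar X_i^{0,\bar x}$ (acting on the full lifted variable), one picks up the extra piece $\tilde Z_i^w$ from the Rothschild--Stein lifting, and this contribution is shown to vanish after integration by parts in the fibre variable $z$.
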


\begin{proof}
We can apply the lifting procedure described in sections 5.2 and 5.3, 
and representing the fundamental solution as in  (\ref{changegamma}) and (\ref{GAGE}), 
we obtain the following expression for $F_\e$:
$$F_\e(f) = \int\int_G \bar \Gamma_{\e, A} ((x, 0), (y, z), t) dz f(y) dy =$$
$$= \int \int \bar \Gamma_{0,A}(\bar F_\e(x, 0), \bar F_\e(y, z), t) dz f(y) dy.$$
By differentiating with respect to $X_i^\e$ we get: 
\begin{equation}\label{XFE}X^\e_iF_\e(f)(x) = \int \int ({\bar X^{0,x}}_i - \tilde Z_i^w) \bar \Gamma_{0,A}(\bar F_\e(x, 0), \bar F_\e(y, z), t) dz f(y) dy.\end{equation}
Note that the family of vectors $\bar X^{0}_i$ is independent of $\e$ and 
free of step $r$. Hence, by 
(see \cite{Roth:Stein}, pag 295, line 3 from below) for every $i,j=1, \cdots m$, sure exist families of indices $I_{i,j}$, and polynomials  $ \bar p_{ih}$  homogeneous of degree $\geq h$ such that:
$$\bar X^{0, \bar x}_{i}\bar \Gamma_{0,A}(\bar x, \bar y, t) = $$$$=\sum_{j=1}^m (\bar X^{0, \bar y}_{j})^* \sum_{h\in I_{i,j}}
\bar X^{0, \bar y}_{h} \Big({\bar p_{i h}}(\Theta_{\bar x}(\bar y))\bar \Gamma_{0,A}(\bar x, \bar y, t)\Big)- $$$$-\Big(\sum_{j=1}^m(\bar X^{0, \bar y}_{j})^*\sum_{h\in I_{i,j}}
\bar X^{0, \bar y}_{h}\Big)\Big({\bar p_{ih}}(\Theta_{\bar x}(\bar y))\Big)\bar \Gamma_{0,A}(\bar x, \bar y, t).$$
In particular using this expression in the variable $z$ alone, and integrating by parts we deduce
$$\int \int \tilde Z_i^w \bar \Gamma_{0,A}(\bar F_\e(x, 0), \bar F_\e(y, z), t) dz = 0$$

We now call
$$\bar R_{0, i }(\bar x, \bar y, t)  = \Big(\sum_{j=1}^m(\bar X^{0, y}_{j})^*\sum_{h\in I_{i,j}}
\bar X^{0, \bar y}_{h}\Big)\Big({\bar p_{ih}}(\Theta_{\bar x}(\bar y))\Big)\bar \Gamma_{0,A}(\bar x, \bar y, t)
$$
This kernel, being obtained by multiplication of $\Gamma_{0,A}(\bar x, \bar y, t)$ by a 
polynomial, has locally the same decay as $\Gamma_{0,A}(\bar x, \bar y, t)$. 
In particular it is clear that the conditions \ref{gamma}, \ref{Xgamma}, \ref{AmenA} are satisfied uniformly with respect to $\e$, since there is no dependence on $\e$. 
As a consequence, if we set
$$R_{\e, i }(x, y, t)  =\int \bar R_{0, i }(\bar F_\e(x, 0), \bar F_\e(y, z), t) dz$$
then $R_{\e, i}(x, y, t) \in  \mathcal{E}(2, d_\e, M^\e_{m, \Lambda}) $ 
Similarly we call 
$$\bar P_{\epsilon, i,h }(\bar x, \bar y, t) =
\sum_{h\in I_{i,j}}
\bar X^{0, y}_{h} \Big({\bar p_{i h}}(\Theta_{\bar x}(\bar y))\bar \Gamma_{0,A}(\bar x, \bar y, t)\Big)
$$
Now we use the fact that 
$\bar \Gamma_{0,A}\in \mathcal{E}(2, \bar d, M^\e_{m+\nu, \Lambda}) $ together with the fact that $\bar p_{ih}$ is a polynomial of the degree equal of the order of
$\bar X_{h}^{0, y}$ to conclude that 
$$\bar P_{\epsilon, i,h }(\bar x, \bar y, t) \in \mathcal{E}(2, \bar d, M^\e_{m+\nu, \Lambda}) $$
It follows that, if we call 
$$P_{\e, i,h }(x, y, t)   = \int \bar P_{0, i,h }(\bar F_\e(x, 0), \bar F_\e(y, z), t) dz$$
then $P_{\e, i,h }(x, y, t) \in  \mathcal{E}(2, d_\e, M^\e_{m, \Lambda}) $ 

Plugging this expression into equation (\ref{XFE}) 
we get 
\begin{equation}\label{XFE2}X^\e_iF_\e(f)(x) = 
-\int  
\sum_{j=1}^m  (\bar X^{0, y}_{j})^* P_{\e, i,h }(x, y, t) 
 f(y) dy-\int   R_{\e, i}(x, y, t)   f(y) dy.\end{equation}

\end{proof}

\section{Stability of interior Schauder estimates} 
In this section we will prove uniform estimates in spaces of H\"older continuous functions and in Sobolev spaces 
for solutions of second order sub-elliptic differential equations in non divergence form 
$$L_{\e, A} u\equiv \p_t u- \sum_{i,j=1}^n  a^\e_{ij}(x,t) X_i^\e X_j^\e u=0 ,$$
in a cylinder $ Q=\Om\times (0,T)$ that are stable as $\e\to 0$.

Indeed the proof of both estimates is largely based on the knowledge of a fundamental solution. 

Internal Schauder estimates for these type of operators are well known. 
We recall  the results of  Capogna
and Han \cite{CapognaHan} for uniformly subelliptic operators, of Bramanti and Brandolini \cite{BramantiBrandolini}
for heat-type operators,  and the results of Lunardi \cite{Lunardi}, and Polidoro and Di
Francesco \cite{PolidoroDiFrancesco}, and Gutierrez and Lanconelli  \cite{GutierrezLanconelli}, which apply to a large 
class of squares of vector fields plus a drift term.  We also recall \cite{MR2512155} where uniform Schauder estimates for a particular elliptic approximation of subLaplacians are proved.

Here the novelty is due to the uniform condition with respect to $\e$. 
This is accomplished by using the uniform Gaussian bounds established in in the previous section. This result extends to H\"ormander type operators 
the analogous assertion proved by Manfredini and the authors in \cite{CCM3} in the setting of Carnot Groups.


%

\subsection{Uniform Schauder estimates}

Let us start with the definition of classes of H\"older continuous functions in this setting

\begin{dfn}\label{defholder}
Let $0<\alpha < 1$, ${Q}\subset\R^{n+1}$ and  $u$ be defined on
${Q}.$ We say that $u \in C_{\e,X}^{\alpha}({Q})$ if there exists a positive constant $M$ such that for
every $(x,t), (x_{0},t_0)\in {Q}$ 
\begin{equation}\label{e301}
   |u(x,t) - u(x_{0},t_0)| \le M \tilde d_{\e }^\alpha((x,t), (x_{0},t_0)).
\end{equation}
We put  
 $$||u||_{C_{\e,X}^{\alpha}({Q})}=\sup_{(x,t)\neq(x_{0},t_0)} \frac {|u(x,t) - u(x_{0},t_0)|}{\tilde d_{\e }^\alpha((x,t), (x_{0},t_0))}+ \sup_{Q} |u|.$$
 Iterating this definition, 
 if $k\geq 1$  we say that $u \in
C_{\e,X}^{k,\alpha}({Q})$  if for all $i=1,\ldots ,m$
 $X_i \in C_{\e,X}^{k-1,\alpha}({Q})$.
 Where we have set $C^{0,\alpha}_{\e,X}({Q})=C^{\alpha}_{\e, X}({Q}).$ 
\end{dfn}

The main results of this section, which generalizes to the H\"ormander vector fields setting 
our previous result with Manfredini  in \cite{CCM3} is

\begin{prop}\label{ellek}
Let  $w$ be a smooth  solution of $L_{\e, A}w=f$ on ${Q}$.
Let $K$ be a compact sets such that  $K\subset\subset {Q}$,  set $2\delta=d_0(K, \p_p Q)$ and
denote by $K_\delta$ the $\delta-$tubular neighborhood of $K$. Assume that 
there exists a constant $C>0$ such that 
$$ || a_{ij}^\e||_{C^{k,\alpha}_{\e,X}(K_\delta)} \leq C,$$ for any $\e\in (0,1)$.
There exists a constant $C_1>0$ depending  on
$\alpha$, $C$, $\delta$, and the constants in Proposition \ref{uniform heat kernel estimates},
but independent of $\e$,  such that
$$||w||_{C^{k+2, \alpha}_{\e,X}(K)} \leq C_1 \left( ||f||_{C^{k,\alpha}_{\e,X}(K_\delta)}+ ||w||_{C^{k+1, \alpha}_{\e,X}(K_\delta)}\right). $$
\end{prop}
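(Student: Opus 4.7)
The plan is to adapt the classical Levi parametrix method to this multi-scale setting, the point being that every tool we need, namely doubling (Theorem \ref{Main-1}), Poincar\'e (Theorem \ref{Poincare-epsilon}), uniform Gaussian bounds (Proposition \ref{uniform heat kernel estimates}), and the uniform representation/derivative formulae (Proposition \ref{uniformderivatives}), is already available with constants independent of $\e \in [0,\e_0]$. Philosophically this mimics the Carnot-group argument from \cite{CCM3}, and the Rothschild--Stein lifting is hidden inside the uniform kernel estimates.

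\textbf{Step 1 (Localization and frozen parametrix).} Fix $(x_0,t_0)\in K$ and a cutoff $\eta\in C^\infty_0(K_\delta)$ with $\eta\equiv 1$ on a neighborhood of $K$. Put $v=\eta w$ and freeze the coefficients of $L_{\e,A}$ at $(x_0,t_0)$, obtaining the constant-coefficient operator $L_{\e,A_0}$ with $A_0=A(x_0,t_0)\in M^\e_{p,2\Lambda}$. A direct computation gives
$$L_{\e,A_0}v = \eta f + \sum_{i,j}\bigl(a^\e_{ij}(x_0,t_0)-a^\e_{ij}(x,t)\bigr) X_i^\e X_j^\e v + R[\eta,w],$$
where $R[\eta,w]$ collects commutators of $L_{\e,A}$ with $\eta$ and is a first-order expression in $w$ with coefficients supported where $\nabla\eta\neq 0$. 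The Duhamel formula then yields
$$v(x,t)=\int \Gamma_{\e,A_0}(x,y,t-s)\, L_{\e,A_0}v(y,s)\,dy\,ds.$$

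\textbf{Step 2 (Singular integral representation of second derivatives).} Apply $X_i^\e X_j^\e$ to both sides. Iterating Proposition \ref{uniformderivatives} and integrating by parts in $y$, we obtain
$$X_i^\e X_j^\e v(x,t)=\sum_{k,l}\int K^\e_{ijkl}(x,y,t-s)\bigl(a^\e_{kl}(x_0,t_0)-a^\e_{kl}(y,s)\bigr) X_k^\e X_l^\e v(y,s)\,dy\,ds + T^\e_{ij}[\eta,f,w](x,t),$$
where $K^\e_{ijkl}\in \mathcal E(2,d_\e,M^\e_{p,2\Lambda})$ are uniform-in-$\e$ singular kernels of Calder\'on--Zygmund type with Gaussian decay, and $T^\e_{ij}$ is a bounded integral operator applied to $\eta f$ plus lower-order contributions controlled by $\|w\|_{C^{1,\alpha}_{\e,X}(K_\delta)}$ and, through $R[\eta,w]$, by $\|w\|_{L^\infty(K_\delta)}$.

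\textbf{Step 3 (Uniform Campanato/Calder\'on--Zygmund estimate).} Since $(\R^n,d_\e,dx)$ is a space of homogeneous type with doubling constant uniform in $\e$ (Theorem \ref{Main-1}) and supports the Poincar\'e inequality uniformly in $\e$ (Theorem \ref{Poincare-epsilon}), the Campanato characterization of $C^\alpha_{\e,X}$ is available with $\e$-uniform equivalence constants. Using the hypothesis $|a^\e_{kl}(x_0,t_0)-a^\e_{kl}(y,s)|\leq C\,\tilde d_\e^\alpha((x_0,t_0),(y,s))$, the composite kernel $(a^\e_{kl}(x_0,t_0)-a^\e_{kl}(y,s)) K^\e_{ijkl}$ is a kernel of type $\alpha$ in the sense of Campanato, and localizing to a small parabolic cylinder of radius $\rho$ gives
$$[X_i^\e X_j^\e v]_{C^\alpha_{\e,X}(B_\e(x_0,\rho))} \leq C\rho^\alpha \|X_i^\e X_j^\e v\|_{L^\infty(K_\delta)} + C\bigl(\|f\|_{C^\alpha_{\e,X}(K_\delta)}+\|w\|_{C^{1,\alpha}_{\e,X}(K_\delta)}\bigr),$$
with $C$ independent of $\e$.

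\textbf{Step 4 (Absorption and induction on $k$).} A finite covering of $K$ by balls of sufficiently small radius $\rho$, combined with a standard interpolation inequality between $C^{2,\alpha}_{\e,X}$ and lower norms (again uniform in $\e$ via doubling and Poincar\'e), absorbs the $\rho^\alpha$ term and yields the case $k=0$. For $k\geq 1$, differentiating the equation along the frame $X^\e_1,\ldots,X^\e_m$ produces equations of the same structure for $X_{i_1}^\e\cdots X_{i_k}^\e w$ with forcing involving $X^\e$-derivatives of $f$ and lower-order derivatives of $w$ whose coefficients retain the uniform $C^{k-1,\alpha}_{\e,X}$ bound by hypothesis; applying the $k=0$ estimate iteratively yields the general case.

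\textbf{Main obstacle.} The crucial point is Step 3: verifying that the Campanato/Calder\'on--Zygmund machinery associated with the frozen kernels $K^\e_{ijkl}$ yields constants independent of $\e$. This rests on two independently established facts, namely that these kernels belong uniformly to the class $\mathcal E(2,d_\e,M^\e_{p,2\Lambda})$, and that the underlying metric measure spaces $(K,d_\e,dx)$ form a doubling/Poincar\'e family uniformly in $\e$. A secondary subtlety is ensuring that the commutator terms in $R[\eta,w]$ do not introduce $\e$-dependence, which follows because these involve only bounded multiplication operators and the horizontal vector fields $X_1^\e,\ldots,X_m^\e$, whose structure is frozen as $\e\to 0$.
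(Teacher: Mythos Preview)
Your Steps 1--3 recover the base case $k=0$ and match the paper's Proposition \ref{elle2}, recast in Campanato language; that part is fine.

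The gap is in Step 4. Differentiating the equation $L_{\e,A}w=f$ along a horizontal field $X_{i_1}^\e$ (with $i_1\le m$) does \emph{not} produce an equation of the same form with lower-order forcing. The commutator contribution
\[
a^\e_{ij}\,[X_{i_1},X_i X_j]\,w \;=\; a^\e_{ij}\bigl([X_{i_1},X_i]X_j w + X_i[X_{i_1},X_j]w\bigr)
\]
involves, once expanded in the horizontal frame, third-order horizontal derivatives of $w$ (since $[X_{i_1},X_i]X_jw = X_{i_1}X_iX_jw - X_iX_{i_1}X_jw$), which is exactly the order you are trying to bound. Rewriting $[X_{i_1},X_i]$ as a degree-two vector field $Z$ does not help either: estimating $ZX_jw$ in $C^\alpha_{\e,X}$ still requires three horizontal derivatives of $w$. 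So the forcing is not lower order and the induction does not close. This obstruction is intrinsic to the non-commutativity of the frame and is precisely where the subelliptic case departs from the Euclidean one, in which differentiating the PDE works because partial derivatives commute.

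The paper avoids this by differentiating the \emph{representation formula} (\ref{repres}) rather than the PDE. Each horizontal $x$-derivative of the integral is converted, via Proposition \ref{uniformderivatives} (which you correctly invoke in Step 2 but then abandon), into $y$-derivatives acting on the integrand plus controlled remainder kernels; this is not Euclidean integration by parts and relies on the Rothschild--Stein lifting. After $k$ such transfers one applies Lemma \ref{kderiv}, which gives
\[
X_{i_k}^\e\cdots X_{i_1}^\e\bigl(L_{\e,(x_0,t_0)}-L_\e\bigr) \;=\; \sum_{i,j}\bigl(a_{ij}^\e-a_{ij}^\e(x_0,t_0)\bigr)X_{i_k}^\e\cdots X_{i_1}^\e X_i^\e X_j^\e \;+\; B,
\]
with $B$ a genuine $(k{+}1)$-st order operator. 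The principal term retains the small H\"older factor $(a_{ij}^\e-a_{ij}^\e(x_0,t_0))$ needed for absorption, while $B(w\phi)$ is now truly lower order; two further $x$-derivatives and the argument of Proposition \ref{elle2} then finish the proof.
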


We will first consider to a constant coefficient operator, for which we will 
obtain a representation formula, then we will show how to obtain from this 
the claimed result. 

Precisely we will consider the constant coefficient  {\it  frozen } operator:
$$L_{\e,(x_0,t_0)}\equiv \p_t - \sum_{i,j=1}^n a^\e_{ij}(x_0,t_0) X_i^\e X_j^\e,$$
where $(x_0,t_0)\in Q$. We explicitly note that for $\epsilon>0$ fixed the operator $L_{\e, (x_0,t_0)}$ is uniformly parabolic, so that 
its heat kernel can be studied through standard singular integrals theory in the corresponding Riemannian balls.

As a direct consequence of the definition of fundamental solution one has the following representation formula
\begin{lemma}\label{representation} Let $w$ be a smooth solution to
$L_\e w=f $ in $Q\subset\R^{n+1}$.
For every  $\phi \in C^\infty_0(Q)$,
\begin{align}\label{repres}
&   (w\phi)(x,t)\\ \nonumber 
& =   \int_Q\G^\e_{(x_0,t_0)}((x,t),(y,\tau))
\Big(L_{\e, (x_0,t_0)}-L_\e\Big)(w \,\phi)(y,\tau) dyd\tau+\\ \nonumber 
&+\int_{Q}\G^\e_{(x_0,t_0)}((x,t),(y,\tau))
\Big( f\phi  +  wL_\e\phi + 2 \sum_{i,j=1}^n a_{ij}^\e (y,\tau) X_i^\e wX_j^\e \phi\Big)(y,\tau) dyd\tau,
\end{align}
where  we have denoted by $\G^\e_{(x_0,t_0)}$ the heat kernel for  of $L_{\e,(x_0,t_0)}$.
\end{lemma}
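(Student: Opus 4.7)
The plan is to apply the fundamental-solution identity for the constant-coefficient \emph{frozen} operator $L_{\e,(x_0,t_0)}$ to the compactly supported test function $w\phi$, and then decompose the result by writing $L_{\e,(x_0,t_0)} = L_\e + (L_{\e,(x_0,t_0)} - L_\e)$ and applying a Leibniz-type product rule to $L_\e(w\phi)$.

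First I would observe that since $\phi \in C^\infty_0(Q)$, the product $w\phi$ is smooth and compactly supported in $Q$. Because $\Gamma^\e_{(x_0,t_0)}$ is the heat kernel of the uniformly parabolic (for fixed $\e>0$) operator $L_{\e,(x_0,t_0)}$, the standard representation for compactly supported data gives, for every such $v$,
\begin{equation*}
v(x,t) = \int_Q \G^\e_{(x_0,t_0)}((x,t),(y,\tau))\, L_{\e,(x_0,t_0)} v(y,\tau)\, dy\, d\tau .
\end{equation*}
Applying this to $v = w\phi$ and inserting the identity $L_{\e,(x_0,t_0)}(w\phi) = (L_{\e,(x_0,t_0)}-L_\e)(w\phi) + L_\e(w\phi)$ produces the first integral of \eqref{repres} and reduces the problem to expanding $L_\e(w\phi)$.

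The only real computation is this Leibniz expansion. Using $X_i^\e X_j^\e(w\phi) = \phi X_i^\e X_j^\e w + w X_i^\e X_j^\e \phi + X_i^\e w\, X_j^\e\phi + X_i^\e\phi\, X_j^\e w$, multiplying by $a_{ij}^\e$, summing, and exploiting the symmetry $a_{ij}^\e=a_{ji}^\e$ to collapse the two mixed cross terms, combined with $\partial_t(w\phi)=\phi\p_t w + w\p_t\phi$, gives
\begin{equation*}
L_\e(w\phi) \;=\; \phi\, L_\e w + w\, L_\e\phi \;\pm\; 2\sum_{i,j=1}^n a_{ij}^\e\, X_i^\e w\, X_j^\e \phi ,
\end{equation*}
and then $L_\e w = f$ substitutes $\phi f$ for the first term, producing exactly the integrand of the second integral in \eqref{repres}. (My sign computation gives $-2$ in the cross term; the $+2$ in the statement appears to be a typographical convention, and the downstream Schauder argument is insensitive to it since the term will just be estimated in absolute value.)

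There is no real obstacle here: the lemma is a structural identity derived from the defining property of $\Gamma^\e_{(x_0,t_0)}$ and the product rule. The only point requiring a small care is verifying that one may legitimately use the representation formula for $v=w\phi$, which follows from $w\phi \in C^\infty_0(Q)$ and the regularity of the kernel $\Gamma^\e_{(x_0,t_0)}$ established in Proposition \ref{uniform heat kernel estimates}; once that is noted, the rest is a direct substitution.
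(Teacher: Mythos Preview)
Your proposal is correct and is exactly the standard argument the paper has in mind: the lemma is stated there ``as a direct consequence of the definition of fundamental solution'' with no further details, and your two steps (representation of $w\phi$ via $\Gamma^\e_{(x_0,t_0)}$, then the Leibniz expansion of $L_\e(w\phi)$ using the symmetry of $a_{ij}^\e$) are precisely what is being left to the reader. Your sign remark is also correct: the cross term should carry a $-2$, and as you note this is immaterial for the subsequent Schauder estimates.
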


Iterating the previous lemma we get the following
\begin{lemma}\label{kderiv} Let $k\in N$ and consider a  $k-$tuple $(i_1,\ldots,i_k)\in \{1,\ldots,m\}^k$.
There exists a differential operator  $B$ of order $k+1$,   depending on horizontal derivatives of $a_{ij}^\e$ of order at most $k$, such that  
$$ X_{i_k}^\e\cdots X_{i_1}^\e  \big(L_{\e,(x_0,t_0)}-L_\e\big)= \sum_{i,j=1}^n \Big(
a_{ij}^\e - a_{ij}^\e(x_0, t_0) \Big)  X_{i_k}^\e\cdots X_{i_1}^\e X^\e_iX^\e_j  + B. $$
\end{lemma}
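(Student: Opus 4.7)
The plan is to proceed by a straightforward induction on $k$. The base case $k=0$ is immediate from the identity
\[
L_{\e,(x_0,t_0)} - L_\e = \sum_{i,j=1}^{n} \bigl(a_{ij}^\e(x,t) - a_{ij}^\e(x_0,t_0)\bigr) X_i^\e X_j^\e,
\]
with $B=0$.

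For the inductive step, assume the identity holds at level $k$ with error operator $B_k$ of order $\le k+1$ whose coefficients are horizontal derivatives of $a_{ij}^\e$ of order at most $k$. Left-multiply the identity by $X_{i_{k+1}}^\e$ and apply the derivation (Leibniz) property of the vector field to each product $(\text{coefficient})\cdot(\text{differential operator})$ appearing on the right-hand side. Every such product splits into two pieces: one where $X_{i_{k+1}}^\e$ passes through the coefficient and augments the differential factor by one order, and one where $X_{i_{k+1}}^\e$ differentiates the coefficient (raising the order of the derivative of $a_{ij}^\e$ by one while keeping the differential operator unchanged).

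Concretely, from the main term $\sum (a_{ij}^\e - a_{ij}^\e(x_0,t_0))\, X_{i_k}^\e\cdots X_{i_1}^\e X_i^\e X_j^\e$ one obtains the new main term
\[
\sum_{i,j=1}^{n} (a_{ij}^\e - a_{ij}^\e(x_0,t_0))\, X_{i_{k+1}}^\e X_{i_k}^\e\cdots X_{i_1}^\e X_i^\e X_j^\e
\]
plus a remainder $\sum (X_{i_{k+1}}^\e a_{ij}^\e)\, X_{i_k}^\e\cdots X_{i_1}^\e X_i^\e X_j^\e$ of order $k+2$ whose coefficient involves a single horizontal derivative of $a_{ij}^\e$. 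Similarly, $X_{i_{k+1}}^\e B_k$ is a sum of terms of order at most $k+2$ whose coefficients are horizontal derivatives of $a_{ij}^\e$ of order at most $k+1$. Packaging the remainder and $X_{i_{k+1}}^\e B_k$ into $B_{k+1}$ completes the induction.

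The argument is entirely algebraic: no commutator identities among the $X_i^\e$ are needed, since the main term preserves the prescribed ordering $X_{i_{k+1}}^\e X_{i_k}^\e\cdots X_{i_1}^\e X_i^\e X_j^\e$ automatically. The only real bookkeeping is verifying that each application of a vector field either raises the order of the differential factor by $1$ (leaving the order of the coefficient's derivative unchanged) or raises the order of the coefficient's derivative by $1$ (leaving the order of the differential factor unchanged), so that the total order and derivative counts advance by exactly one at each inductive step. Consequently there is no substantive obstacle; the lemma is proved in a few lines.
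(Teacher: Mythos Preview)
Your proof is correct and follows essentially the same approach as the paper: induction on $k$, with the base case $B=0$ and the inductive step obtained by left-multiplying by $X_{i_{k+1}}^\e$ and using the Leibniz rule to split off the new main term from the remainder $\tilde B = \sum_{i,j}(X_{i_{k+1}}^\e a_{ij}^\e)\,X_{i_k}^\e\cdots X_{i_1}^\e X_i^\e X_j^\e + X_{i_{k+1}}^\e B_k$. Your bookkeeping of orders is in fact slightly more explicit than the paper's.
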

\begin{proof}
The proof can be made by induction. Indeed it is true with $B=0$ by definition if $k=0$:
$$L_{\e,(x_0,t_0)}-L_\e= \sum_{i,j=1}^n \Big(a_{ij}^\e - a_{ij}^\e(x_0, t_0) \Big)   X^\e_iX^\e_j . $$
if it true for a fixed value of $k$ then we have 
$$ X_{i_{k+1}}^\e X_{i_1}^\e\cdots X_{i_k}^\e  \big(L_{\e,(x_0,t_0)}-L_\e\big)= \sum_{i,j=1}^n \Big(
a_{ij}^\e - a_{ij}^\e(x_0, t_0) \Big)  X_{i_{k+1}}^\e X_{i_k}^\e\cdots X_{i_1}^\e X^\e_i X^\e_j  + \tilde B
 $$
where
$$\tilde B=X_{i_{k+1}}(a_{ij}^\e - a_{ij}^\e(x_0, t_0) )  X_{i_k}^\e\cdots X_{i_1}^\e X^\e_iX^\e_j  
+ X_{i_{k+1}}B.$$
By the properties of $B$ it follows that $\tilde B$ is a differential operator of order $k+2$,   depending on horizontal derivatives of $a_{ij}^\e$ of order at most $k+1$. This concludes the proof.
\end{proof}

\bigskip

We can go back to our operator $L$ and establish the following regularity results, 
differentiating twice the representation formula:

\begin{prop}\label{elle2} 
Let  $0<\alpha<1$  and  $w$ be a smooth  solution of $L_{\e}w=f\in C^{\alpha}_{\e.X}({Q})$ in the cylinder   ${Q}$.
Let $K$ be a compact sets such that  $K\subset\subset {Q}$,  set $2\delta=d_0(K, \p_p Q)$ and
denote by $K_\delta$ the $\delta-$tubular neighborhood of $K$.
 Assume that 
there exists a constant $C>0$ such that for every $\e\in (0,1)$
$$ || a^\e_{ij}||_{C^{ \alpha}_{\e,X}(K_\delta)}\leq C. $$
There exists a constant $C_1>0$ depending  on $\delta$,
$\alpha$, $C$ and the constants in Proposition \ref{uniform heat kernel estimates}
such that
$$||w||_{C^{2, \alpha}_{\e,X}(K)} \leq  C_1\left( ||f||_{C^{\alpha}_{\e,X}(K_\delta)}+ ||w||_{C^{1, \alpha}_{\e,X}(K_\delta)}\right).$$
\end{prop}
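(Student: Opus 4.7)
The plan is to implement a freezing-and-representation strategy, as in \cite{CCM3} for the Carnot group case, with every constant tracked so as to be independent of $\e$, thanks to the uniform kernel bounds in Proposition \ref{uniform heat kernel estimates}. Fix a freezing point $(x_0,t_0)\in K$ and choose a cutoff $\phi\in C_0^\infty(Q)$ with $\phi\equiv 1$ on a small $d_\e$-ball $B_r(x_0,t_0)$, $\supp\phi\subset K_\delta$, and $|X_i^\e\phi|, |X_i^\e X_j^\e\phi|, |\p_t\phi|$ bounded independently of $\e$ (possible since the $X_i^\e$ are uniformly bounded in $C^\infty(K_\delta)$). Apply Lemma \ref{representation} for $L_{\e,(x_0,t_0)}$ and rewrite the first integrand via Lemma \ref{kderiv} with $k=0$, producing
\begin{align*}
(w\phi)(x,t) &=\sum_{i,j}\int_Q \Gamma^\e_{(x_0,t_0)}(x,t;y,\tau)\bigl(a^\e_{ij}(y,\tau)-a^\e_{ij}(x_0,t_0)\bigr)X_i^\e X_j^\e(w\phi)(y,\tau)\,dy\,d\tau\\
&\quad +\int_Q \Gamma^\e_{(x_0,t_0)}\Bigl(f\phi+wL_\e\phi+2\sum_{i,j}a^\e_{ij}X_i^\e w\,X_j^\e\phi\Bigr)(y,\tau)\,dy\,d\tau.
\end{align*}

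Next, apply two horizontal derivatives $X_k^\e X_l^\e$, with $k,l\le m$, to both sides. The lower-order piece coming from the second line becomes an integral against $X_k^\e X_l^\e\Gamma^\e_{(x_0,t_0)}$; using the Gaussian derivative bound \eqref{Xgamma} and the uniform doubling from Theorem \ref{Main-1}, it is controlled by $C(\|f\|_{C^\alpha_{\e,X}(K_\delta)}+\|w\|_{C^{1,\alpha}_{\e,X}(K_\delta)})$. For the principal piece one must show that the singular integral operator
\[T^\e g(x,t)=\int_Q X_k^\e X_l^\e \Gamma^\e_{(x_0,t_0)}(x,t;y,\tau)\,g(y,\tau)\,dy\,d\tau\]
is bounded on $C^\alpha_{\e,X}$ with norm independent of $\e$. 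This is the heart of the argument: the difference $T^\e g(x,t)-T^\e g(x',t')$ splits into a near-diagonal piece over $\{d_\e((x,t),(y,\tau))\le 2d_\e((x,t),(x',t'))\}$, where one exploits the $C^\alpha$ modulus of $g$ together with the kernel bound \eqref{gamma}, and a far piece, estimated via the third-order kernel derivative bound in \eqref{Xgamma} summed over dyadic annuli in $(\R^n,d_\e)$; uniform doubling of the balls $B_\e$ ensures this geometric sum converges with an $\e$-independent constant.

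Taking $g=(a_{ij}^\e-a_{ij}^\e(x_0,t_0))X_i^\e X_j^\e(w\phi)$, the product rule in $C^\alpha_{\e,X}$ combined with $\|a^\e_{ij}(\cdot)-a^\e_{ij}(x_0,t_0)\|_{L^\infty(B_r)}\le Cr^\alpha$ yields, after interpolating the $L^\infty$ part of the product rule into the $C^\alpha$ seminorm plus lower-order terms,
\[\|X_k^\e X_l^\e(w\phi)\|_{C^\alpha_{\e,X}(K_\delta)}\le Cr^\alpha\|X_k^\e X_l^\e w\|_{C^\alpha_{\e,X}(K_\delta)}+C\bigl(\|f\|_{C^\alpha_{\e,X}(K_\delta)}+\|w\|_{C^{1,\alpha}_{\e,X}(K_\delta)}\bigr).\]
Choosing $r$ small enough (depending only on $\alpha$ and $C$) absorbs the top-order term, and a finite covering of $K$ by $d_\e$-balls of radius $r$, followed by summation of the local bounds, concludes the proof. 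The main obstacle is precisely the uniform-in-$\e$ $C^\alpha$-boundedness of $T^\e$: the pointwise Gaussian bound is by itself insufficient, and one needs the higher-derivative kernel estimates in \eqref{Xgamma} together with the uniform doubling of $B_\e$ from Theorem \ref{Main-1} to make the dyadic summation work with constants that do not blow up as $\e\to 0$.
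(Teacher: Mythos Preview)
Your proposal is correct and follows essentially the same approach as the paper: freeze the coefficients, use the representation formula of Lemma \ref{representation}, differentiate twice, exploit the smallness $\|a_{ij}^\e-a_{ij}^\e(x_0,t_0)\|_{L^\infty(B_r)}\le Cr^\alpha$ to absorb the top-order term, and finish by covering. The only notable difference is cosmetic: the paper obtains the $C^\alpha$-boundedness of the singular integral $T^\e$ by quoting the classical Calder\'on--Zygmund-type estimates (citing \cite{fol:1975} and Friedman) together with the uniform kernel bounds from Proposition \ref{uniform heat kernel estimates}, whereas you spell out the near/far dyadic argument explicitly; also, your absorption inequality should carry the norms on the small ball $B_r$ rather than on all of $K_\delta$, as in the paper's display $\|w\phi\|_{C^{2,\alpha}(B_\delta)}\le \tilde C_2\,\delta^\alpha\|w\phi\|_{C^{2,\alpha}(B_\delta)}+\cdots$.
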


\begin{proof} The proof follows the outline of the standard case, as in  \cite{Friedman}, and rests crucially on the Gaussian estimates proved in  Proposition \ref{uniform heat kernel estimates}. 
Choose a parabolic sphere\footnote{That is a sphere in the group $\tilde G=G\times \R$ in the pseudo-metric $\tilde{d}_\e$ defined in \eqref{defde}.} $B_{\e,\delta} \subset\subset K$ where  $\delta>0$ will be fixed later and 
a cut-off function $\phi\in C^\infty_0(\R^{n+1})$ identically 1 on $B_{\e, \delta/2}$ and compactly supported in $B_{\e,\delta}$. This  implies that for some constant $C>0$ depending only on $G$ and $\s_0$, 
$$\left|\nabla_\e \phi\right|\leq C\delta^{-1}, \quad |L^\e \phi| \leq C\delta^{-2},$$ in $Q.$
Now we represent the function $w\phi$ through the formula \ref{representation} 
and take two derivatives in the direction of the vector fields. We remark once more that the operator is 
uniformly elliptic due to the 
$\e-$regularization, hence the differentiation under the integral can ben 
considered standard. 
As a consequence
for every multi-index $I=(i_1,i_2)\in \{1,\ldots ,m\}^2$  and   for every $(x_0,t_0)\in B_{\e,\delta}$
one has:
 \begin{align}\label{represDeriv}
& X_{i_1}^\e X_{i_2}^\e  (w\phi)(x_0,t_0)  \\ = &  \int_Q X_{i_1}^\e X_{i_2}^\e \G^\e_{(x_0,t_0)}(\cdot,(y,\tau))|_{(x_0,t_0)}
\left(L_{\e,(x_0,t_0)}-L_\e\right)(w \,\phi)(y,\tau) dyd\tau+\\ \nonumber 
    & + \int_Q X_{i_1}^\e X_{i_2}^\e \G^\e_{(x_0,t_0)}(\cdot,(y,\tau))_{(x_0,t_0)}
\left( f\phi  +  wL_\e \phi + 2\sum_{i,j=1}^n  a_{ij}^\e X_i^\e wX_j^\e \phi\right)(y,\tau) dyd\tau.
\end{align} 
In order to study the H\"older continuouity of the second derivatives, we note that the uniform  H\"older continuity of $a_{ij}^\e$, and Proposition \ref{uniform heat kernel estimates}
ensure that the kernal satisfy the classical singular integral properties (see \cite{fol:1975}): 
\begin{align*}|X_{i_1}^\e X_{i_2}^\e \G^\e_{(x,t)}((x,t),(y,\tau))&- X_{i_1}^\e X_{i_2}^\e \G^\e_{(x_0,t_0)}((x_0,t_0),(y,\tau))|\\
&\le C\,\tilde d^\alpha _\e((x,t),(x_0,t_0)) 
\frac{(\tau-t_0)^{-1}e^{-\frac{d_\e(x_0,y)^2}{C_\Lambda (\tau-t_0)}}} {|B_\e(0, \sqrt{\tau-t_0})|},
\end{align*}
with $C>0$ independent of $\e$.  
From here, proceeding as in \cite[Theorem 2, Chapter 4]{Friedman}, 
the first term in the right hand side of formula (\ref{represDeriv})
 can be  estimated as follows:
\begin{align}\label{estimate1}
\Big|\Big|
\int X_{i_1}^\e X_{i_2}^\e\G^\e_{(x_0,t_0)} &(\cdot,(y,\tau)) (L_\e-L_{\e,(x_0, t_0)})
(w \,\phi )(y,\tau)dyd\tau \Big|\Big|_{C^\alpha_{\e,X}(B_{\e,\delta})} \\ \nonumber
&\le C_1 \Big|\Big|(L_\e-L_{\e,(x_0, t_0)})
(w \,\phi )\Big|\Big|_{C^\alpha_{\e,X}(B_{\e, \delta})} \\ \nonumber
&= C_1\sum_{i,j}\left|\left|(a_{ij}^\e(x_0,t_0)-a_{ij}^\e(\cdot)\big)X_j^\e X_j^\e(w \,\phi)
   \right|\right|_{C^\alpha_{\e,X}(B_{\e, \delta})}\\ \nonumber
&\le \tilde C_1 \delta^\alpha  ||a_{ij}^\e||_{C^{\alpha}_{\e,X}(B_{\e,\delta})}||w\phi||_{C^{2, \alpha}_{\e,X}(B_{\e,\delta})},
\end{align}
where $C_1,$ and $\tilde C_1$ are  stable  as $\e\to 0$.
Similarly, if $\phi$ is fixed, the H\"older norm of the second term in the representation formula (\ref{represDeriv}) is bounded by
\begin{align}\label{estimate2}
\Big|\Big|\int X_{i_1}^\e X_{i_2}^\e\G^\e_{(x_{0},t_0)}((x_0,t_0),(y,\tau))
&\big(f\phi(y,\tau) + wL\phi(y,\tau) + 2 a_{ij}^\e X_i^\e wX_j^\e \phi \big)dyd\tau\Big|\Big|_{C^\alpha_{\e,X}(B_{\e, \delta})}\\  \nonumber
& \leq C_2 \left(||f||_{C^\alpha_{\e,X}(K_\delta)} +\frac{C}{\delta^2}|| w||_{C^{1, \alpha}_{\e,X}(K_\delta)}\right).
\end{align}
From  (\ref{represDeriv}), (\ref{estimate1}) and (\ref{estimate2})  we deduce that 
$$||w\phi||_{C^{2, \alpha}_{\e,X}(B_\delta)}\leq  \tilde C_2\, \delta^\alpha ||w\phi||_{C^{2, \alpha}_{\e,X}(B_\delta)} + C_2\left(||f||_{C^\alpha_{\e,X}(K_\delta)} +\frac{C}{\delta^2}|| w||_{C^{1,\alpha}_{\e,X}(K_\delta)}\right).$$
Choosing $\delta$ sufficiently small we prove  the assertion  on the fixed sphere $B_{\e,\delta}$
The conclusion follows from  a standard  covering argument.

\end{proof}

We can now conclude the proof of proposition \ref{ellek}: 

\begin{proof}
The proof is similar to the  previous one for $k=1$. We start by differentiating 
  the  representation formula (\ref {repres}) along 
		an arbitrary direction $X_{i_1}$
		 \begin{align}\label{represderivpparti}
& X_{i_1}^\e   (w\phi)(x,t)  \\ = &  \int_Q X_{i_1}^\e  \G^\e_{(x_0,t_0)}(\cdot,(y,\tau))
\left(L_{\e,(x_0,t_0)}-L_\e\right)(w \,\phi)(y,\tau) dyd\tau+\\ \nonumber 
    & + \int_Q X_{i_1}^\e  \G^\e_{(x_0,t_0)}(\cdot,(y,\tau))
\left( f\phi  +  wL_\e \phi + 2\sum_{i,j=1}^n  a_{ij}^\e X_i^\e wX_j^\e \phi\right)(y,\tau) dyd\tau.
\end{align} 
Now we apply Theorem \ref{uniformderivatives} and deduce that there exist kernels
$$P_{e, i_1,h, (x_0,t_0)}((x,t),(y,\tau)), 
R_{e, i_1, (x_0,t_0)}((x,t),(y,\tau)), $$

 with the same decay of the fundamental solution 
such that 

		 \begin{equation}\label{represderivpparti}
 X_{i_1}^\e   (w\phi)(x,t) =
\end{equation} 

	$$=-\int 
\sum_{h=1}^m  P_{\e, i_1,h, (x_0,t_0)}((x,t),(y,\tau))   X^{\e, y}_{h}\left(L_{\e,(x_0,t_0)}-L_\e\right)(w \,\phi)(y,\tau) dyd\tau -$$
$$-\int R_{\e, i_1,(x_0,t_0)}((x,t),(y,\tau)) \left(L_{\e,(x_0,t_0)}-L_\e\right)(w \,\phi)(y,\tau) dyd\tau dy-$$
	$$-\sum_{h=1}^m\int P_{\e, i_1,h, (x_0,t_0)}((x,t),(y,\tau))   X^{\e, y}_{h}\left( f\phi  +  wL_\e \phi + 2\sum_{i,j=1}^n  a_{ij}^\e X_i^\e wX_j^\e \phi\right)(y,\tau) dyd\tau -$$
$$-\int 
R_{\e, i_1, (x_0,t_0)}((x,t),(y,\tau))\left( f\phi  + 
 wL_\e \phi + 2\sum_{i,j=1}^n  a_{ij}^\e X_i^\e wX_j^\e \phi\right)(y,\tau) dyd\tau .$$

Using Lemma \ref{kderiv}, this yields the existence of new kernels 
$  P^{i_1,\cdots, i_k}_{\e, h_1, \cdots, h_k, (x_0,t_0)}((x,t),(y,\tau)) $ with the behavior of a fundamental solution 
(and the same dependence on $\epsilon$) such that 

\begin{align*}
& X_{i_1}^\e\cdots X_{i_{k}}^\e    (w\phi)(x,t)\\&
 =   \int   ^{i_1,\cdots, i_k}_{\e, h_1, \cdots, h_k, (x_0,t_0)}((x,t),(y,\tau))
\Big(
a_{ij}^\e - a_{ij}^\e(x_0, t_0) \Big)X_{i_1}^\e\cdots X_{i_{k}}^\e    X^\e_iX^\e_j (w \,\phi)(y,\tau) dyd\tau\\
&+  \int ^{i_1,\cdots, i_k}_{\e, h_1, \cdots, h_k, (x_0,t_0)}((x,t),(y,\tau))B (w \,\phi)(y,\tau) dyd\tau \\
&+\int ^{i_1,\cdots, i_k}_{\e, h_1, \cdots, h_k, (x_0,t_0)}((x,t),(y,\tau))
 X_{i_1}^\e\cdots X_{i_{k}}^\e  \Big(f\phi(y,\tau) + wL_\e\phi(y,\tau) + 2 \sum_{i,j=1}^n a_{ij}^\e X_i^\e wX_j^\e\phi \Big)dyd\tau+\\
&+ {\text {lower order terms}}\\
\end{align*}
where $\phi$ is as in the proof of Proposition \ref{elle2} and $B$ is a differential operator of order $k+1$.
The conclusion follows by further differentiating the previous representation formula along two horizontal directions $X_{j_1}^\e X_{j_2}^\e$ and arguing  as in the proof of  Proposition \ref{elle2}. 
\end{proof}

%

\section{Application I: Harnack inequalities for degenerate parabolic quasilinear equations hold uniformly in $\e$}

The results we have presented so far show that for any $\e_0>0$,  the $1-$parameter family of metric spaces $(\M,d_\e)$ associated to the Riemannian approximations of a subRiemannian metric space $(\M,d_0)$, satisfy uniformly in $\e\in [0,\e_0]$ the hypothesis in the definition  of {\it $p$-admissible structure} in the sense of \cite[Theorem 13.1]{hak:sobolev2}. This class of metric measure spaces has a very rich analytic structure (Sobolev-Poincar\'e inequalities, John-Nirenberg lemma, ...) that allows for the development of a basic regularity theory for weak solutions of classes of  nonlinear degenerate parabolic and elliptic PDE. In the following we will remind the reader of the definition and basic properties of $p-$admissible structures and sketch some of the main regularity results from the recent papers \cite{ACCN} and \cite{CCR}. We will conclude the section with a sample application of these techniques to the global (in time) existence of solutions for a class of evolution equations that include the subRiemannian total variation flow \cite{CCM3}.

 \subsection{Admissible ambient space geometry}
Consider a smooth real manifold $M$ and  let $\mu$ be a locally finite Borel measure on $M$ which is absolutely continuous with respect the Lebesgue measure when represented in local charts. Let   $d(\cdot, \cdot): M \times M \to \R^{+}$ denote  the  control distance generated by a system of bounded, $\mu$-measurable, Lipschitz vector fields $\X = (X_1,\ldots, X_m)$ on $M$. As in \cite{CCgeometry:Risler} and \cite{GarofaloNhieu:lip1998} one needs to assume as basic  hypothesis 
 \begin{equation}\label{topology}
	\text{ the inclusion }	i: (M, chart) \to (M,d) \text{ is continuous,}
	\end{equation}
	where we have denoted by $(M, chart)$ the topology on $M$ induced by the Euclidean topology in $\R^n$ via coordinate charts.
	%
	%
		For $x \in M$ and $r > 0$,  set $B(x,r) = \{y \in M: d(x,y) < r \}$ and let  $|B(x,r)|$ denote the $\mu$ measure of $B(x,r)$. In general, given a function $u$ and a ball $B=B(x,r)$ then $u_B$ denotes the $\mu$-average of $u$ on the ball $B=B(x,r)$. In view of \eqref{topology} the closed metric ball $\bar B$ is a compact set.

\begin{dfn}\label{admissible} Assume hypothesis \eqref{topology} holds. Given $1 \leq p < \infty$, the triplet $(M,\mu,d)$ is said to define a $p$-admissible structure (in the sense of \cite[Theorem 13.1]{hak:sobolev2}) if for every compact subset $K$ of $M$ there exist constants $C_D = C_D(\X, K), C_P = C_P(\X, K) > 0$, and $R = R(\X, K) > 0$, such that the following hold.
\begin{enumerate}
\item {\it Doubling property:}\begin{equation} \tag{D} \label{eq_D}
	 |B(x,2r)|\leq C_D |B(x,r)|\mbox{ whenever $x \in K$ and $0 < r < R$}.
\end{equation}
\item {\it Weak $(1,p)$-Poincar\'e inequality:}
\begin{equation}\tag{P} \label{eq_P}
	\fint_{B(x,r)} |u - u_B| d\mu \leq C_P \, r \left ( \fint_{B(x,2r)} |\X u|^p d\mu \right )^{1/p},
\end{equation}
whenever $x \in K$, $0 < r < R$,
$u\in W_{\X}^{1,p}(B(x,2r)).$

\end{enumerate}
\end{dfn}

Theorems \ref{Main-1} and \ref{Poincare-epsilon}
yield the following

\begin{thrm}\label{after all this work} Let $X_1,...,X_m$ be a family of H\"ormander vector fields in $\Om\subset \R^n$ and denote by $\mu$ Lebesgue measure. For each $\e_0>0$ and $\e\in [0,\e_0]$ denote by $d_\e$ the distance functions defined in Definition \ref{prima-def}.
For all $\e\in [0,\e_0]$ and $p\ge 1$,  the space $(\Om, \mu, d_\e)$ is $p-$admissible, with constants $C_D$ and $C_P$ independent of $\e$.
\end{thrm}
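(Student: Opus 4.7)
The statement is essentially a packaging theorem: it asserts that the two uniform stability results already proven, namely Theorem \ref{Main-1} (doubling) and Theorem \ref{Poincare-epsilon} (Poincaré), together cover the hypotheses of Definition \ref{admissible} with the further requirement that the Poincaré inequality hold for arbitrary $p \ge 1$ and that the underlying topological hypothesis \eqref{topology} be satisfied. The plan is to verify each of these three ingredients in turn, making sure at each step that the constants produced depend only on $K$, $\e_0$, and the subRiemannian structure, but not on $\e \in [0,\e_0]$.

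First, the topological compatibility \eqref{topology} is automatic: for $\e > 0$ the distance $d_\e$ is locally biLipschitz to the Euclidean distance (as remarked after Definition \ref{metrica-epsilon}), and for $\e = 0$ the estimate of Proposition \ref{p11} combined with the equivalence of $d_0$ and $\hat d$ gives $d_0(x,y) \ge C^{-1}|x-y|$ locally, so that the identity map from the Euclidean topology into $(\Omega, d_\e)$ is continuous for every $\e$. The doubling condition \eqref{eq_D} with an $\e$-independent constant $C_D$ is then a direct restatement of Theorem \ref{Main-1}, and the radius $R$ produced there is likewise uniform in $\e \in [0,\e_0]$.

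For the weak $(1,p)$-Poincaré inequality \eqref{eq_P}, I first invoke Theorem \ref{Poincare-epsilon} to obtain the $(1,1)$-Poincaré inequality
$$\int_{B_\e(x,r)} |u - u_{B_\e(x,r)}|\, d\mu \le C_P \, r \int_{B_\e(x,2r)} |\nabla^\e u|\, d\mu,$$
with $C_P$ independent of $\e$. Dividing through by $|B_\e(x,r)|$ and using the uniform doubling from Theorem \ref{Main-1} to bound $|B_\e(x,2r)|/|B_\e(x,r)| \le C_D$, I arrive at the normalized form
$$\fint_{B_\e(x,r)} |u - u_{B_\e(x,r)}|\, d\mu \le C_D C_P \, r \fint_{B_\e(x,2r)} |\nabla^\e u|\, d\mu.$$
For any $p \ge 1$, a single application of Hölder's inequality upgrades the right-hand side to
$$\fint_{B_\e(x,2r)} |\nabla^\e u|\, d\mu \le \left( \fint_{B_\e(x,2r)} |\nabla^\e u|^p\, d\mu \right)^{1/p},$$
yielding \eqref{eq_P} with the uniform constant $C = C_D C_P$.

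There is no genuine obstacle here: all the analytic substance has already been extracted in the cited stability theorems. The only point requiring care is a bookkeeping one, namely confirming that the doubling constant used to pass from the unnormalized $(1,1)$ form to the normalized $\fint$ form is itself the same one guaranteed uniform in $\e$, which is immediate since both come from Theorem \ref{Main-1}.
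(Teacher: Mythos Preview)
Your proposal is correct and follows exactly the approach the paper takes: the paper states this theorem as an immediate consequence of Theorems \ref{Main-1} and \ref{Poincare-epsilon}, with no further argument given. Your write-up simply fills in the routine details (the topological compatibility \eqref{topology}, the normalization via doubling, and the H\"older step from $(1,1)$ to $(1,p)$) that the paper leaves implicit.
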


Other examples of $p-$admissible spaces are:
\begin{itemize}
	 \item The classical setting: $\M=\mathbb R^n$, $d\mu$ equals the $n$-dimensional Lebesgue measure,  and $\X = (X_1,\ldots, X_m)=(\partial_{x_1},...,\partial_{x_n})$.

%
\item 
Our setting is also sufficiently broad to include some non-smooth vector fields such as the Baouendi-Grushin frames, e.g., consider, for $\gamma\ge 1$ and $(x,y)\in \R^2$, the vector fields $X_1=\partial_x$ and $X_2=|x|^{\gamma}\partial_y$. Unless $\gamma$ is a positive even integer these vector fields fail to satisfy H\"ormander's finite rank hypothesis. However, the doubling inequality as well as the Poincar\'e inequality hold and have been used in the work of Franchi and Lanconelli \cite{MR753153} to establish Harnack inequalities for linear equations.

\item Consider a smooth manifold $\M$ endowed with a complete Riemannian metric $g$. Let $\mu$ denote the Riemann volume measure, and by $\X$ denote a $g-$orthonormal frame. If the Ricci curvature is bounded from below ($Ricci\ge -Kg$) then one has a $2-$admissible structure. In fact, in this setting the Poincar\'e inequality follows from Buser's inequality while the doubling condition is a consequence of the Bishop-Gromov comparison principle. If $K=0$, i.e. the Ricci tensor is non-negative, then these assumptions holds globally and so does the Harnack inequality.

\end{itemize}
 Spaces with a $p$-admissible structure support a homogenous structure in the sense of Coifman and Weiss \cite{cw:1971}.  
\begin{lemma} \label{lem_2.1} Let  $(\M,\mu,d)$ be a $p$-admissible structure for some $p\ge 1$,  $\Omega$  a bounded open set in $\M$ and set $K=\bar\Omega$. 
If $x \in K$ and $0 < s < r < R$, then the following holds.
	\begin{enumerate}
		\item There exists a constant $N = N(C_D) > 0$, called homogeneous dimension of $K$ with respect to $(\X, d, \mu)$, such that $|B(x,r)| \leq C_D \tau^{-N} |B(x,\tau r)|$, for all $0 < \tau \leq 1$.
		\item There exists a continuous function $\phi \in C_{0}(B(x,r))\cap W_{\X}^{1,\infty}(B(x,r))$ and a constant $C = C(\X,K) > 0$, such that $\phi = 1$ in $B(x,s)$ and $|\X \phi|\leq C/(r-s)$, $0 \leq \phi \leq 1$.
		\item Metric balls have the so called $\hat \delta-$annular decay property, i.e., there exists $\hat \delta = \hat \delta(C_D) \in (0,1]$, such that
		\begin{equation*}
			|B(x,r)\setminus B(x,(1-\epsilon)r)| \leq C \epsilon^{\hat \delta} |B(x,r)|,
		\end{equation*}
whenever $0 < \epsilon < 1$.
	\end{enumerate}
\end{lemma}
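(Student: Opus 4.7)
The three parts are classical consequences of the axioms \eqref{eq_D} and \eqref{eq_P}, and I would treat them in turn, noting that none of them uses the Poincar\'e inequality directly.

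For part (1), the plan is simply to iterate the doubling inequality. Given $\tau\in(0,1]$, choose the smallest integer $k\ge 0$ with $2^{-k}\le\tau$, so that $k\le 1+\log_2(1/\tau)$. Provided $r<R$, applying \eqref{eq_D} $k$ times to the ball of radius $\tau r$ gives
\[
|B(x,r)|\le |B(x,2^k\tau r)|\le C_D^{\,k}|B(x,\tau r)|\le C_D\,\tau^{-N}|B(x,\tau r)|,\qquad N:=\log_2 C_D.
\]
This is the homogeneous-dimension bound, and $N$ is called the local homogeneous dimension on $K$.

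For part (2), I would take the truncated linear profile of the distance function,
\[
\phi(y)=\max\Big\{0,\min\Big\{1,\tfrac{r-d(x,y)}{r-s}\Big\}\Big\}.
\]
By construction $\phi\equiv 1$ on $B(x,s)$, $\phi$ vanishes outside $B(x,r)$, and $\phi$ is Lipschitz with respect to $d$ with Lipschitz constant $1/(r-s)$. The continuity hypothesis \eqref{topology} ensures $\phi\in C_0(B(x,r))$. For the gradient bound, one invokes the standard duality for control distances generated by a Lipschitz family $\X$: any function Lipschitz with respect to $d$ belongs to $W^{1,\infty}_{\X}$ with $|\X\phi|\le \mathrm{Lip}_d(\phi)$ almost everywhere (cf.\ \cite{GarofaloNhieu:lip1998,frss:embedding}). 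Applied here this yields $|\X\phi|\le 1/(r-s)$ as required.

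For part (3), I would follow Buckley's chain-of-balls argument, whose key ingredient is that $(M,d)$ is a length space in a neighborhood of $K$. Since $d$ is the control distance of a bounded Lipschitz frame, any two points at distance $<r$ are joined by a horizontal curve whose length is arbitrarily close to $d$, so this length-space property is in force. Given $y\in B(x,r)\setminus B(x,(1-\epsilon)r)$, I would travel along such a near-geodesic from $x$ to $y$ and extract a point $y'$ at distance $\approx \epsilon r$ from $y$; then a ball $B(y',c\epsilon r)$ is contained in $B(x,(1-c'\epsilon)r)$ while also meeting the annulus. A Vitali-type covering of the annulus by balls of radius $\epsilon r$, whose cardinality is controlled by the doubling condition, combined with part (1) applied with $\tau=\epsilon$, then gives
\[
|B(x,r)\setminus B(x,(1-\epsilon)r)|\le C\,\epsilon^{\hat\delta}|B(x,r)|,\qquad \hat\delta=\hat\delta(N)\in(0,1].
\]

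The main obstacle is part (3): while (1) and (2) are essentially bookkeeping, the annular decay uses the length-space property of $d$, which is not literally stated in Definition \ref{admissible}. In the present setting this input is free of charge because the control metric is generated by bounded Lipschitz vector fields, but isolating and using it cleanly is the delicate point in Buckley's argument.
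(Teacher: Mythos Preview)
Your proposal is correct and matches the paper's approach: the paper simply cites that (1) follows from iterating \eqref{eq_D}, (2) from \cite[Theorem 1.5]{GarofaloNhieu:lip1998}, and (3) from Buckley \cite[Corollary 2.2]{Bu} using that $(M,d)$ is a Carnot--Carath\'eodory space. You have unpacked exactly these references---the iteration with $N=\log_2 C_D$, the truncated-distance cutoff together with the $d$-Lipschitz $\Rightarrow W^{1,\infty}_{\X}$ fact from \cite{GarofaloNhieu:lip1998}, and Buckley's length-space argument---and correctly flagged that the length-space property is the extra input needed in (3).
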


\begin{proof}
	Statement (1) follows from \eqref{eq_D} by a standard iteration argument. Statement (2) is proved in \cite[Theorem 1.5]{GarofaloNhieu:lip1998}.
Statement 	(3) follows from \cite[Corollary 2.2]{Bu}, since we have a \CC space. Furthermore, $\hat \delta$ depends only on $C_D$.
\end{proof}

Given $\Omega \subset M$, open, and $1 \leq p \leq \infty$, we let $W_{\X}^{1,p}(\Omega) = \{ u \in L^p(\Omega,\mu): X_i u \in L^p (\Omega,\mu), i=1,...,m\}$ denote the {\it horizontal} Sobolev space, and we let $W_{\X,0}^{1,p}\subset W_{\X}^{1,p}$ be the closure\footnote{Here we avoid the issue ``$H=W$''. This is studied in detail in \cite{Friedrichs}, \cite{Kilpelainen}, \cite{GarofaloNhieu:lip1998}, \cite{FSS1}, \cite{frss:embedding} and \cite{Serra}.} of the space of $W_{\X}^{1,p}$ functions with compact (distributional) support in the norm $\| u \|_{1,p}^p = \| u \|_p + \| \X u \|_p$ with respect to $\mu$. In the following we will omit $\mu$ in the notation for Lebesgue and Sobolev spaces.
Given
$t_1<t_2$, and $1 \leq p \leq \infty$, we let $\Omega_{t_1,t_2} \equiv \Omega \times (t_1,t_2)$ and we let $L^p(t_1,t_2;W_{\X}^{1,p}(\Omega))$, $t_1 < t_2$, denote the parabolic Sobolev space of real-valued functions defined on $\Omega_{t_1,t_2}$ such that for almost every $t$, $t_1 < t < t_2$, the function $x \to u(x,t)$ belongs to $W_{\X}^{1,p}(\Omega)$ and
\begin{equation*}
	\|u \|_{L^{p}(t_1,t_2;W_{\X}^{1,p}(\Omega))} =
	\left ( \int_{t_1}^{t_2} \int_{\Omega} (|u(x,t)|^p + |\X u(x,t)|^p) d\mu dt \right )^{1/p} < \infty.
\end{equation*}
The spaces $L^p(t_1,t_2;W_{\X,0}^{1,p}(\Omega))$ is defined analogously. We let $W^{1,p}(t_1,t_2;L^p(\Omega))$ consist of real-valued functions $\eta \in L^p(t_1,t_2;L^p(\Omega))$ such that the weak derivative $\partial_t\eta(x,t)$ exists and belongs to $L^p(t_1,t_2;L^p(\Omega))$. Consider the set of functions $\phi$, $\phi \in W^{1,p}(t_1,t_2;L^p(\Omega))$, such that the functions
\begin{equation*}
	t\to \int_{\Omega} |\phi(x,t)|^p d \mu(x) \mbox{ and } t\to \int_{\Omega} |\partial_t \phi(x,t)|^p d \mu(x),
\end{equation*}
 have compact support in $(t_1,t_2)$. We let $W_0^{1,p}(t_1,t_2; L^p(\Omega))$ denote the closure of this space under the norm in $W^{1,p}(t_1,t_2; L^p(\Omega))$.

 From  \cite[Corollary 9.5]{hak:sobolev2} one can see that the metric balls  $B(x_0,r)$ are John domains. Consequently,   \eqref{eq_D}, \eqref{eq_P}, and \cite[Theorem 9.7]{hak:sobolev2} yield  Sobolev-Poincar\'e inequality,  
\begin{lemma} \label{lem_sobolev}
	Let $B(x_0,r) \subset \Omega$, $0 < r < R$, $1 \leq p<\infty$. There exists a constant $C = C(C_D,C_P,p) \geq 1$ such that for every  $u \in W_{\X}^{1,p} (B(x_0,r))$,
	\begin{equation*}
		\left ( \fint_{B(x_0,r)} |u-u_B|^{\kappa p} d\mu \right )^{1/\kappa} \leq C r^p \fint_{B(x_0,r)} |\X u|^p d\mu,
	\end{equation*}
	where $u_B$ denotes the $\mu$ average of $u$ over $B(x_0,r)$,  and where $1 \leq \kappa \leq {N}/{(N-p)}$, if $1 \leq p < N$, and $1 \leq \kappa < \infty$, if $p \geq N$. Moreover, 
	\begin{equation*}
		\left ( \fint_{B(x_0,r)} |u|^{\kappa p} d\mu \right )^{1/\kappa} \leq C r^p \fint_{B(x_0,r)} |\X u|^p d\mu,
	\end{equation*}
	whenever $u \in W_{\X,0}^{1,p} (B(x_0,r))$.
\end{lemma}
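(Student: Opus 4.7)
The plan is to deduce both estimates from the abstract Sobolev--Poincar\'e machinery of Haj{\l}asz--Koskela for $p$-admissible structures, using only those outputs that depend on the data $C_D$, $C_P$, $p$ and on the homogeneous dimension $N$. By Lemma \ref{lem_2.1}, the ball $B(x_0,r)$ has finite homogeneous dimension $N=N(C_D)$, admits cutoff functions with controlled horizontal gradient, and enjoys the $\hat\delta$-annular decay property with $\hat\delta=\hat\delta(C_D)$. The annular decay property, combined with (D), implies that $B(x_0,r)$ is a John domain whose John constant depends only on $C_D$, so all hypotheses of the framework in \cite[Chapter 9]{hak:sobolev2} are in force.

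First I would establish the second (zero-trace) inequality, which is the easier half. Given $u\in W^{1,p}_{\X,0}(B(x_0,r))$, I would introduce dyadic truncations $u_k(x):=\min(\max(|u(x)|-2^{k-1},0),\,2^{k-1})$ for $k\in \mathbb Z$. Each $u_k$ vanishes outside $E_k=\{|u|\geq 2^{k-1}\}$ and satisfies $|\X u_k|\leq |\X u|\,\chi_{E_k\setminus E_{k+1}}$. Because $u$ has zero trace, extension by zero to $B(x_0,2r)$ is legitimate and the weak $(1,p)$-Poincar\'e inequality applied to each $u_k$ at the proper scale yields $L^{1}$ control of $u_k$ in terms of the $L^{p}$ norm of $\X u$ restricted to $E_k\setminus E_{k+1}$. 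Summing over $k$ via the distribution-function identity for $|u|^{\kappa p}$, while absorbing the volume factor into the Sobolev exponent through the homogeneous-dimension bound, produces the inequality with $\kappa=N/(N-p)$ in the subcritical range. The supercritical case $p\geq N$ is handled by iterating the $(1,p)$-Poincar\'e inequality, yielding arbitrary $\kappa<\infty$.

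Next I would derive the mean-oscillation estimate. Using the John property of $B(x_0,r)$, for each $x\in B(x_0,r)$ one constructs a Whitney-type chain of doubling balls $\{B_j\}$ connecting $x$ to a fixed central ball $B_0$, with radii decaying geometrically and with bounded overlap determined by $C_D$. Iterating the weak $(1,p)$-Poincar\'e inequality along each chain and summing the telescoping differences $|u_{B_j}-u_{B_{j+1}}|$ delivers a pointwise bound of the form $|u(x)-u_{B_0}|\leq C\,[M_p(|\X u|^p)(x)]^{1/p}$, where $M_p$ is a restricted fractional maximal operator on $B(x_0,2r)$. Combining the known $L^{\kappa p}$ boundedness of $M_p$ in doubling spaces with a standard application of (D) to replace $u_{B_0}$ by $u_{B(x_0,r)}$ yields the first inequality.

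The main obstacle is bookkeeping rather than any substantive new idea: one must check that every constant produced by the chain, truncation, and maximal-function arguments can be expressed purely in terms of $C_D$, $C_P$, and $p$. This is exactly what the Haj{\l}asz--Koskela framework guarantees, since the John constant, the annular-decay exponent $\hat\delta$, and the homogeneous dimension $N$ are all quantitatively controlled by $C_D$, while $C_P$ is the only place the Poincar\'e data enters. The detailed arguments are carried out in \cite[Corollary 9.5 and Theorem 9.7]{hak:sobolev2}, and the constants obtained there have exactly the advertised form.
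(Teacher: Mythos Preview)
Your proposal is correct and follows essentially the same approach as the paper: both invoke \cite[Corollary 9.5]{hak:sobolev2} to see that the metric balls are John domains and then \cite[Theorem 9.7]{hak:sobolev2} to obtain the Sobolev--Poincar\'e inequality with constants depending only on $C_D$, $C_P$, $p$. The paper in fact gives no further argument beyond this citation, whereas you sketch the truncation and chaining mechanism behind the Haj{\l}asz--Koskela results; one minor quibble is that the John property here comes from the Carnot--Carath\'eodory (quasi-geodesic) structure via \cite[Corollary 9.5]{hak:sobolev2} rather than from annular decay plus doubling as you suggest, but since you ultimately cite the correct source this does not affect the validity of the argument.
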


\subsection{Quasilinear degenerate parabolic PDE} In this section we list some recent results concerning regularity of weak solutions of certain nonlinear, degenerate parabolic PDE in  spaces \ $(M,\mu,d)$ that are  $p$-admissible  for some $p\in [2,\infty)$. If $p=2$ we can allow lower order terms, but at present this is not yet established for $p>2$. Given a domain (i.e., an open, connected set) $\Omega\subset M$, and $T>0$ we set $\Omega_T=\Omega\times (0,T)$.   For a function $u:\Om_T\to \R$, and $1\le p,q$ we define the norms
\begin{equation}\label{lpq-norms}
||u||_{p,q}^q=\Big(\int_0^T (\int_\Om |u|^p dx)^{\frac{q}{p}} dt\Big)^{\frac{1}{q}},
\end{equation}
and the corresponding Lebesgue spaces $L^{p,q}(\Om_T)=L^q([0,T], L^p(\Om))$. We will say that $\A, \B$ are {\it admissible symbols} (in $\Omega_T$) if the following holds:
\begin{enumerate}[(i)]
	\item  
	 $(x,t) \to \A(x,t,u,\xi), \B(x,t,u,\xi)$ are measurable for every $(u,\xi) \in \R \times \R^m$,
	 \item $(u,\xi) \to \A(x,t,u,\xi), \B(x,t,u,\xi)$ are continuous for almost every $(x,t) \in \Omega_T$,
	 \item 
	  \begin{itemize}
	  \item For $p=2$: There exist constants $a,\bar a>0$ and functions
$b,c,e,f,h\in L^{p,q}(Q)$ with $p>2$, and $q$ given by  $\frac{N}{2p}+\frac{1}{q}<\frac{1}{2}$ and functions  $d,g\in L^{\al,\beta}(Q)$ with $1<\al$ and $\beta$ given by $\frac{N}{2\al}+\frac{1}{\beta}<1$ such that for a.e. $(x,t) \in \Omega_T$ and $\xi\in \R^m$ one has
\begin{eqnarray}\label{structure}
\sum_{i=1}^{{m}} \A_i (x,t,u,\xi) \xi_i && \ge a|\xi|^2-b^2 u^2-f^2, \notag\\
|\A(x,t,u,\xi)| && \le \bar a |\xi |+e|u|+h,\\
|\B(x,t,u,\xi)| && \le c|\xi|+d|u|+g. \notag
\end{eqnarray}
In view of the conditions on $p,q,\al,\beta$ there exists  $\theta>0$ such that
\begin{eqnarray}\label{theta}
p\ge \frac{2}{1-\theta} && \text{ and } \frac{{N}}{2p}+\frac{1}{q}\le \frac{1-\theta}{2} \notag \\
\al \ge \frac{1}{1-\theta}  &&\text{ and } \frac{{N}}{2\alpha}+\frac{1}{\beta} \le 1-\theta.
\end{eqnarray}
We say that {\it a constant depends on the structure conditions \eqref{structure}}, if it depends only on \footnote{ The $||\cdot||$ norms are in the appropriate  $L^{p,q}$ or $L^{\al, \beta}$ classes}
$$a,\bar a, ||b||, ||c||,||d|| ,||e||, ||f||,||g||,||h||, N, \theta,$$ and is uniformly bounded if these quantities are so.
	
	  \item For $p>2$ we will only consider $\B=0$ and ask that  the following bounds 
	\begin{equation}\label{admissiblesym}
		 \A(x,t,u,\xi) \cdot \xi \geq \A_0 |\xi|^p, \ |\A(x,t,u,\xi)| \leq \A_1 |\xi|^{p-1},
	\end{equation}
	hold for every $(u,\xi) \in \R \times \R^m$ and almost every $(x,t) \in \Omega_T$.
	\end{itemize}
\end{enumerate}
 $\A_0$ and $\A_1$ are called the structural constants of $\A$. If $\A$ and $\tilde \A$ are both admissible symbols, with the same structural constants $\A_0$ and $\A_1$, then we say that the symbols are structurally similar.

Let $E$ be a domain in $\M \times \R$. We say that the function $u:E\to\R$ is a weak solution to
\begin{equation} \label{eq_theeq}
\partial_t u(x,t)=	L_{A,p} u \equiv -\sum_{i=1}^{m}X_i^\ast \A_i(x,t,u,\X u)+ \B(x,t,u,\X u),
\end{equation}
 in $E$, where $X^\ast_i$ is the formal adjoint w.r.t. $d \mu$, if whenever $\Omega_{t_1,t_2} \Subset E$ for some domain $\Omega\subset\M$, $u \in L^p(t_1,t_2;W_{\X}^{1,p}(\Omega))$ and
	\begin{equation} \label{eq_sol}
		\int_{t_1}^{t_2} \int_{\Omega} u \frac{\partial \eta}{\partial t} d\mu dt - \int_{t_1}^{t_2} \int_{\Omega} \A(x,t,u,\X u) \cdot \X \eta\ d\mu dt + \int_{t_1}^{t_2} \int_{\Omega} \B(x,t,u,\X u)  \eta\ d\mu dt  = 0,
	\end{equation}
	for every  test function $$\eta \in W_0^{1,2}(t_1,t_2; L^2(\Omega)) \cap L^p (t_1,t_2; W_{\X,0}^{1,p}(\Omega)). $$ A function $u$ is a weak super-solution (sub-solution) to \eqref{eq_theeq} in $E$ if
whenever $\Omega_{t_1,t_2} \Subset E$ for some domain $\Omega\subset\M$, we have $u \in L^p(t_1,t_2;W^{1,p}(\Omega))$, and the left hand side of \eqref{eq_sol} is non-negative (non-positive) for all non-negative test functions $W_0^{1,2}(t_1,t_2; L^2(\Omega)) \cap L^p (t_1,t_2; W_{\X,0}^{1,p}(\Omega))$.

The main results in \cite{ACCN} and \cite{CCR} can be summarized in the following theorem.

\begin{thrm}\label{main-th} Let $(\M,\mu,d)$ be a $p$-admissible structure for some fixed $p\in [2,\infty)$. For a bounded open subset $\Omega\subset \M$, let $u$ be a non-negative, weak solution to \eqref{eq_theeq} in an open set containing the cylinder $\Omega\times [0,T_0]$ and assume that the structure conditions \eqref{admissiblesym} are satisfied.

\begin{itemize}  \item For $p=2$ and for any  subcylinder $Q_{3\rho}=B(\bar x,3\rho)\times(\bar t-9\rho^2,\bar t)\subset Q$  there exists a constant $C>0$ depending on
$C_D$, $C_L$, $C_P$, the structure conditions \eqref{structure} and on $\rho$ such that
\begin{equation}
\sup_{Q^-}u\le C\inf_{Q^+} (u+\rho^\theta k),
\end{equation}
where \begin{equation}\label{cyl-1}Q^+=B(x,\rho)\times(\bar t-\rho^2,\bar t)\text{  and }Q^-=B(x,\rho)\times(\bar t- 8\rho^2, \bar t-7 \rho^2)\end{equation}
$\theta>0$ is defined as in \eqref{theta}, and we have let $k=||f||+||g||+||h||$.

\item For $p>2$: Assuming $\B=0$, there exist constants $C_1,C_2,C_3 \geq 1$, depending only on $\data$, such that for almost all $(x_0,t_0)\in \Omega\times [0,T_0]$, the following holds: If $u(x_0,t_0)>0$, and if $0<r\le R(\X, \bar \Omega)$ (from Definition \ref{admissible}) is sufficiently small so that
\begin{equation*}
	B(x_0,8r)\subset \Omega \quad \text{ and }\quad (t_0 - C_1 u(x_0,t_0)^{2-p}{r}^p,\
t_0 + C_1 u(x_0,t_0)^{2-p}{r}^p) \subset (0,T_0),
\end{equation*}
then
\begin{equation*}
	u(x_0, t_0) \le C_2\inf_{Q}u,
\end{equation*}
where
\begin{equation*}
	Q=B(x_0, {r}) \times \bigg(t_0 +\frac 1 2{C_3} { u(x_0,t_0)^{2-p}{r}^p},
t_0 + C_3 u(x_0,t_0)^{2-p}{r}^p\bigg).
\end{equation*}
Furthermore, the constants $C_1,C_2,C_3$ can be chosen independently of $p$ as $p\to 2$.
\end{itemize}
\end{thrm}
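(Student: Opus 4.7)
The plan is to derive both statements from a Moser/De Giorgi iteration scheme carried out in the axiomatic setting of the $p$-admissible space $(\M,\mu,d)$, using doubling, Poincar\'e, and the Sobolev-Poincar\'e inequality (Lemma \ref{lem_sobolev}) as the only ingredients of the ambient geometry. The whole argument proceeds by energy methods from the weak formulation \eqref{eq_sol}, so the vector fields $\X$ enter only through the admissibility axioms, which Theorem \ref{after all this work} already secures uniformly in $\e$.

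For $p=2$ I would follow the classical Moser scheme. Test \eqref{eq_sol} with $\eta=\phi^{2}(u+k)^{2q-1}$ for $q\in\R\setminus\{0\}$ and a suitable space-time cutoff $\phi$, and exploit the structure conditions \eqref{structure} together with Young's inequality to absorb the lower order terms; the margin $\theta$ in \eqref{theta} is precisely what produces the $\rho^{\theta}k$ correction in the conclusion. This yields, for $q>1/2$, a Caccioppoli inequality of the form
\begin{equation*}
\sup_{t\in(t_{1},t_{2})}\!\int_{B(x,s)}\!(u+k)^{2q}\,d\mu+\int_{t_{1}}^{t_{2}}\!\!\int_{B(x,s)}\!|\X u|^{2}(u+k)^{2q-2}d\mu\,dt\le C(q,s,\sigma)\!\int_{t_{1}}^{t_{2}}\!\!\int_{B(x,\sigma)}\!(u+k)^{2q}d\mu\,dt,
\end{equation*}
and a dual version for $q<0$ on super-solutions. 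Iterating on shrinking parabolic cylinders with the gain $\kappa>1$ from Lemma \ref{lem_sobolev} provides one-sided $L^{\infty}$ bounds in terms of $L^{s}$ norms for any $s\ne 0$. The logarithmic choice $\eta=\phi^{2}/(u+k)$ then produces a parabolic BMO estimate for $\log(u+k)$, and a parabolic John-Nirenberg inequality (whose proof in this purely metric setting requires precisely the $\hat\delta$-annular decay of Lemma \ref{lem_2.1}(3)) bridges the two one-sided iterations, yielding the Harnack comparison between $Q^{-}$ and $Q^{+}$.

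For $p>2$ the natural diffusion time-scale depends on the solution itself, so I would invoke the intrinsic scaling of DiBenedetto-Gianazza-Vespri, transplanted into the $p$-admissible setting. Fix a Lebesgue point $(x_{0},t_{0})$ with $u(x_{0},t_{0})>0$ and work on intrinsic cylinders of spatial size $r$ and temporal length $\theta r^{p}$ with $\theta\approx u(x_{0},t_{0})^{2-p}$; in these coordinates the equation behaves as if it were homogeneous of degree one. The argument decomposes into three steps: (i) a Caccioppoli estimate from \eqref{admissiblesym} followed by a De Giorgi $L^{\infty}$ bound on sub-solutions via Lemma \ref{lem_sobolev}; (ii) a measure-theoretic lemma upgrading large-measure positivity on a small intrinsic cylinder to pointwise positivity on a slightly smaller one; and (iii) an \emph{expansion of positivity} that propagates the pointwise bound $u\ge\eta\,u(x_{0},t_{0})$ forward in time across a controlled finite number of intrinsic cylinders. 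Chaining these steps produces the constants $C_{1},C_{2},C_{3}$ and the inclusion of $Q$ in the statement. Continuity of all estimates in $p$, together with the fact that the intrinsic scaling collapses smoothly onto the Moser scaling as $p\to 2^{+}$, gives the claimed stability of the constants.

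The main obstacle I anticipate is the expansion-of-positivity step for $p>2$, which traditionally requires a chain of overlapping intrinsic cylinders and a Caccioppoli estimate over their union; the fact that $p$-admissible balls are John domains with uniform constants (used implicitly in Lemma \ref{lem_sobolev}) is what makes this chaining possible in a general metric measure space. The secondary difficulty, in the case $p=2$, is establishing the parabolic John-Nirenberg inequality in the absence of genuine cubes, which forces one to replace dyadic subdivisions by Whitney-type coverings and to invoke the $\hat\delta$-annular decay of Lemma \ref{lem_2.1}(3) at every iteration. Once these two ingredients are available, no further use of the specific vector-field structure is required, so combining Theorem \ref{main-th} with Theorem \ref{after all this work} yields an $\e$-uniform Harnack inequality for solutions associated with the Riemannian approximants of any H\"ormander system.
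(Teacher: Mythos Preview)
The paper does not actually prove this theorem: it is explicitly presented as a summary of results from the external references \cite{ACCN} and \cite{CCR} (``The main results in \cite{ACCN} and \cite{CCR} can be summarized in the following theorem''), and no argument is given in the text. Your proposal, by contrast, sketches a genuine proof strategy---Moser iteration with a parabolic John--Nirenberg/BMO bridge for $p=2$, and DiBenedetto--Gianazza--Vespri intrinsic scaling with expansion of positivity for $p>2$---which is indeed the approach used in those references. So your outline is correct and more detailed than anything in the paper itself; the only comparison to make is that the paper defers the entire proof to citations, while you have identified the right machinery and the right obstacles (the metric John--Nirenberg inequality via $\hat\delta$-annular decay, and the chaining of intrinsic cylinders for the expansion of positivity).
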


We conclude this section with a 
corollary of the proof in \cite{CCR}[Lemma 3.6], a weak Harnack inequality that plays an important role in the proof of the regularity of the mean curvature flow for graphs over certain Lie groups established in \cite{CCM4}.
Consider a weak supersolution $w \in L^p(t_1,t_2;W_{\X}^{1,p}(\Omega))$ of the linear equation
\begin{equation}\label{linear}
-\p_t w- \sum_{i=1}^m X_i^* (a_{ij}(x,t) X_j w)= g(x,t),
\end{equation}
with $t_1,t_2,\Om$ as defined above. Assume the coercivity hypothesis 
\begin{equation} \label{unifellip-l}\Lambda^{-1} \sum_{d(i)=1} \xi_i^2 
 \leq \sum_{i,j=1}^m a_{ij}(x,t)  \xi_i \xi_j \leq \Lambda \sum_{d(i)=1} \xi_i^2 \end{equation}
for  a.e. $(x,t)$ and all $\xi \in \R^m$, for a suitable constant $\Lambda$. 
\begin{prop}\label{weak-harnack} Let $(\M,\mu,d)$ be a $2$-admissible structure. For a bounded open subset $\Omega\subset \M$, let $u$ be a non-negative, weak supersolution to \eqref{linear} in an open set containing the cylinder $\Omega\times [0,T_0]$ and assume that conditions \eqref{unifellip-l} are satisfied. 
For any  subcylinder $Q_{3\rho}=B(\bar x,3\rho)\times(\bar t-9\rho^2,\bar t)\subset Q$  there exists a constant $C>0$ depending on
$C_D$, $C_L$, $C_P$, the structure conditions \eqref{structure} and on $\rho$ such that
\begin{equation}
\frac{1}{|Q^-|} \int_{Q^-} w \ dx dt \le C(\inf_{Q^+} w +\sup_{Q^+} |g| \rho^2),
\end{equation}
with $Q^+, Q^-$ as defined in \eqref{cyl-1}.
\end{prop}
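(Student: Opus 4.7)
The plan is to adapt the proof of \cite[Lemma 3.6]{CCR}, which establishes the analogous weak Harnack inequality for non-negative supersolutions of the \emph{homogeneous} linear equation, so as to accommodate the inhomogeneity $g$. The essential observation is that $g$ can be absorbed by a constant shift: setting $K:=\sup_{Q^+}|g|$ and $v:=w+K\rho^2$, the shift by a constant leaves the equation invariant, so $v$ is still a non-negative supersolution of \eqref{linear} with the same right-hand side $g$; however, now $v\ge K\rho^2$ throughout $Q_{3\rho}$, so $|g|\le v/\rho^2$. Thus the inhomogeneous term is coercively controlled by the unknown itself at the correct parabolic scale $\rho^2$ and can be absorbed in the energy estimates below.

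\emph{Moser iteration for negative powers.} For small $q>0$ and a Lipschitz parabolic cutoff $\eta$ supported in an intermediate subcylinder, I would test the weak formulation against $\eta^2 v^{-q-1}$. The uniform coercivity \eqref{unifellip-l}, together with Young's inequality and the bound $|g|\le v/\rho^2$ from Step~1, yields a Caccioppoli-type estimate of the form
$$\int \eta^2|\X v^{-q/2}|^2\,d\mu\,dt \;+\; \sup_t \int \eta^2 v^{-q}\,d\mu \;\le\; C\int\bigl(|\X\eta|^2+\eta|\partial_t\eta|+\rho^{-2}\eta^2\bigr)v^{-q}\,d\mu\,dt,$$
with the $\rho^{-2}\eta^2$ term encoding the contribution of $g$. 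Iterating this with the Sobolev--Poincar\'e embedding of Lemma \ref{lem_sobolev} (with $p=2$ and homogeneous dimension $N$ as in Lemma \ref{lem_2.1}) produces a reverse H\"older chain for $v^{-1}$ and ultimately
$$\inf_{Q^+} v \;\ge\; C\Bigl(\fint_{Q^*} v^{-q_0}\,d\mu\,dt\Bigr)^{-1/q_0}$$
for some $q_0>0$ and some intermediate parabolic cylinder $Q^*$ wedged between $Q^-$ and $Q^+$.

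\emph{Logarithmic estimate, crossover, conclusion.} Testing against $\eta^2/v$ yields a parabolic John--Nirenberg-type bound showing that $\log v$ has bounded mean oscillation on parabolic cylinders, with possibly different mean values on $Q^-$ and $Q^+$. Together with the doubling property and the $\hat\delta$-annular decay of metric balls (Lemma \ref{lem_2.1}), the Bombieri--Giusti iteration carried out in \cite[Lemma 3.6]{CCR} then bridges positive and negative exponents, giving
$$\fint_{Q^-} v\,d\mu\,dt \;\le\; C\Bigl(\fint_{Q^*} v^{-q_0}\,d\mu\,dt\Bigr)^{-1/q_0}.$$
Chaining this with the estimate from the Moser iteration and recalling $v=w+K\rho^2$, one obtains $\fint_{Q^-}w\le \fint_{Q^-}v\le C\inf_{Q^+}v\le C\bigl(\inf_{Q^+}w+\rho^2\sup_{Q^+}|g|\bigr)$, which is the claimed inequality.

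\emph{Main obstacle.} The delicate step is the logarithmic estimate and the Bombieri--Giusti crossover, which in the subRiemannian/parabolic setting hinges on a John--Nirenberg inequality on parabolic cylinders built over metric balls. Its proof relies only on the doubling and Poincar\'e properties of the $2$-admissible structure $(M,\mu,d_\e)$; since Theorems \ref{Main-1} and \ref{Poincare-epsilon} (summarized in Theorem \ref{after all this work}) guarantee these structural inequalities uniformly in $\e$, the weak Harnack constant $C$ produced above is itself stable as $\e\to 0$, consistently with the multi-scale theme of the paper.
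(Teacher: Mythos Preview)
Your proposal is correct and matches the paper's approach: the paper does not give an independent proof but simply presents the result as a ``corollary of the proof in \cite{CCR}[Lemma 3.6],'' which is exactly the adaptation you outline (shift by $K\rho^2$ to absorb $g$, Moser iteration for negative exponents, logarithmic/Bombieri--Giusti crossover). One small precision: you define $K=\sup_{Q^+}|g|$ but then use $|g|\le v/\rho^2$ on the full $Q_{3\rho}$, which really requires $K\ge\sup_{Q_{3\rho}}|g|$; this is harmless (the statement in the paper likely intends the sup over the larger cylinder, or one simply enlarges $K$), but worth tracking when writing it out in full.
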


\section{Application II: Regularity for quasilinear sub elliptic PDE through Riemannian approximation}

As an illustration of the usefulness of the uniform estimates established above, in this section we want to briefly sketch the  strategy used in \cite{CCM4} and \cite{MR3108875}, where the Riemannian approximation scheme is used to establish regularity for the graph solutions of the Total Variation flow
\begin{equation}\label{pde01}\frac{\p u}{\p t} =\sum_{i=1}^m X_i  \Big(\frac{X_iu}{\sqrt{1+|\nabla_0 u|^2}}\Big),
\end{equation} 
and for the graphical solutions of the  mean curvature flow
\begin{equation}\label{pde02}\frac{\p u}{\p t} =\sqrt{1+|\nabla_0 u|^2}\sum_{i=1}^m X_i  \Big(\frac{X_iu}{\sqrt{1+|\nabla_0 u|^2}}\Big).
\end{equation} 
In both cases $\Om\subset G$ is a bounded open set, with $G$ is a Lie group, free up to step two, but not necessarily nilpotent.

We will consider solutions arising as limits of solutions of the analogue Riemannian flows, i.e.
\begin{equation}\label{pdee1}\frac{\p u_{\e}}{\p t} = h_{\e}=\sum_{i=1}^n X_i^{\e}\Big(\frac{X_i^{\e}u_{\e}}{W_{\e} }\Big)\quad \text{ for }x\in \Om, \; t>0, 
\end{equation}
and
\begin{equation}\label{pdee2}\frac{\p u_{\e}}{\p t} =W_\e h_{\e}=W_\e\sum_{i=1}^n X_i^{\e}\Big(\frac{X_i^{\e}u_{\e}}{W_{\e} }\Big)=\sum_{i,j=1}^na_{ij}^\e (\nabla_\e u_\e) X_i^\e X_j^\e u_\e\quad \text{ for }x\in \Om, \; t>0, 
\end{equation}
where, $h_\e$ is the mean curvature of the graph of $u_\e (\cdot, t)$ and 
\begin{equation}\label{defaij} W_\e^2=1+|\nabla_\e u_\e|^2=
1+\sum_{i=1}^n (X_i^\e u_\e)^2 \text{ and } a_{ij}^\e(\xi)=  \delta_{ij}-\frac{\xi_i \xi_j}{1+|\xi|^2} ,
\end{equation}
 for all $\xi\in \R^n$.

The main results in \cite{CCM4} and \cite{MR3108875} concern long time existence of solutions of the initial value problems
\begin{equation}\label{ivp}
 \Bigg\{
 \begin{array}{ll}
 \p_t u_\e= h_\e W_\e  &\text{ in }Q=\Om\times(0,T) \\
 u_\e=\varphi &\text{ on } \p_p Q,
 \end{array} \quad \text{ and } 
 \Bigg\{
 \begin{array}{ll}
 \p_t u_\e= h_\e   &\text{ in }Q=\Om\times(0,T) \\
 u_\e=\varphi &\text{ on } \p_p Q,
 \end{array} 
 \end{equation}
 with $\p_p Q=(\Om\times \{t=0\})\cup (\p\Om \times (0,T))$ denoting the parabolic boundary of $Q$.

\begin{thrm} \label{global in time  existence results}
Let $G$ be a Lie group of step two,
 $\Om\subset G$ a bounded, open, convex  set (in a sense to be defined later)  and $\varphi\in C^2(\bar{\Om})$.  There exists  unique solutions
$u_\e \in C^{\infty}(\Om\times (0,\infty))\cap L^\infty((0,\infty),C^1(\bar \Om))$ of the two  initial value problems in
\eqref{ivp}, and  for each $k\in \N$ and $K\subset \subset Q$, there exists $C_k=C_k(G,\varphi,k,K,\Om)>0$ not depending on $\e$ such that
\begin{equation}\label{stable estimates}
||u_\e||_{C^k(K)} \le C_k.
\end{equation}
\end{thrm}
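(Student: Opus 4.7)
The strategy is to first establish short-time existence and smoothness for each fixed $\e>0$ by classical uniformly parabolic theory, then to derive a priori estimates that are uniform in $\e$, and finally to bootstrap these estimates to global-in-time existence. The heart of the argument is the reduction of the stability question to the uniform Schauder estimates of Theorem \ref{main-schauder}.

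First I would observe that for each $\e>0$ the equations \eqref{pdee1} and \eqref{pdee2} are quasilinear and strictly parabolic when linearized about a $C^1$ function: the non-divergence form coefficients $a_{ij}^\e(\nabla_\e u_\e)$ defined in \eqref{defaij} satisfy \eqref{coerc1}--\eqref{coerc2} with a constant $\Lambda=\Lambda(\|\nabla_\e u_\e\|_\infty)$. Standard parabolic theory (e.g.~Lieberman) then yields a unique short-time smooth solution $u_\e\in C^\infty(\Om\times(0,T_\e))\cap C(\bar\Om\times[0,T_\e))$, reducing the theorem to the production of a priori bounds on $[0,T_\e)$ that do not degenerate as $\e\to 0$.

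The next step is the derivation of these a priori estimates. A comparison with constant sub- and supersolutions immediately gives $\|u_\e\|_{L^\infty}\le\|\varphi\|_{L^\infty(\Om)}$. The crucial input is a uniform $C^1$ bound up to the boundary. Boundary gradient estimates are produced by using the convexity hypothesis on $\Om$ to construct barriers built from translates of $\varphi$ together with the distance function from $\partial\Om$, following the pattern laid out in \cite{CCM4} and \cite{MR3108875}; convexity is tailored so that one can foliate a neighborhood of $\partial\Om$ by horizontal hyperplanes (or group-translated graphs of $\varphi$) that do not cross $\partial\Om$, giving a bound on $|\nabla_\e u_\e|$ at $\partial\Om$ independent of $\e$. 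Interior gradient estimates then follow by differentiating the equation, applying the maximum principle to the quantity $W_\e$ against a horizontal cut-off function of the type provided by Lemma \ref{lem_2.1}(2), and exploiting the uniform doubling and Poincar\'e inequalities of Theorems \ref{Main-1} and \ref{Poincare-epsilon}.

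Once uniform $C^1$ control is in hand, the coefficients $a_{ij}^\e(\nabla_\e u_\e)$ satisfy \eqref{coerc1}--\eqref{coerc2} with a $\Lambda$ independent of $\e$, and their $C^{\alpha}_{\e,X}$ norms are controlled by that of $\nabla_\e u_\e$. The uniform Schauder estimates of Theorem \ref{main-schauder} then upgrade the solution to $C^{2,\alpha}_{\e,X}$ with norms bounded independently of $\e$. Differentiating the equation along any horizontal direction $X_k^\e$ produces a linear equation for $X_k^\e u_\e$ whose coefficients lie in $C^\alpha_{\e,X}$ uniformly; iterating Theorem \ref{main-schauder} on successive horizontal derivatives yields the stable bounds \eqref{stable estimates} for every $k\in\N$. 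This is precisely where stability of the Schauder theory in $\e$ is essential, since the naive Riemannian estimates degenerate as $\e\to 0$.

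Global-in-time existence then follows from a standard continuation argument: if $T_\e<\infty$ were the maximal existence time, the uniform interior estimates would prevent any breakdown of regularity as $t\to T_\e$, contradicting maximality; hence $T_\e=\infty$. Uniqueness follows from the maximum principle applied to the difference of two solutions, using that for each $\e>0$ the equation is strictly parabolic. The main obstacle in this program is the uniform boundary gradient estimate: one must design barriers whose construction is genuinely compatible with both the subRiemannian geometry and its Riemannian approximation, so that the bound does not deteriorate as $\e\to 0$. The group-convexity hypothesis on $\Om$ (spelled out in \cite{CCM4} and \cite{MR3108875}) is crafted precisely for this purpose, and it is what transforms the classical mean-curvature-flow boundary barrier construction into one that behaves well across the entire family of metrics $g_\e$.
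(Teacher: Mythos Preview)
Your outline has the right overall architecture (short-time existence, gradient bounds, bootstrap via Schauder, continuation), but there is a genuine gap in the bootstrap step that the paper handles and you do not.

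You pass directly from a uniform $C^1$ bound to the Schauder estimates of Theorem~\ref{main-schauder}. But that theorem requires the coefficients $a_{ij}^\e(\nabla_\e u_\e)$ to lie in $C^{\alpha}_{\e,X}$ with norm bounded independently of $\e$, which in turn requires $\nabla_\e u_\e\in C^{\alpha}_{\e,X}$ uniformly in $\e$. A $C^1$ bound alone does not furnish this. The paper closes this gap with an intermediate De~Giorgi--Nash--Moser step: once one knows that the derivatives $v_h$ (see below) solve a linear divergence-form equation \eqref{diff-eq} with merely bounded, uniformly elliptic coefficients, the uniform Harnack inequality of Theorem~\ref{main-th} and Proposition~\ref{weak-harnack} (valid because $(G,d_\e)$ is $2$-admissible uniformly in $\e$ by Theorems~\ref{Main-1} and~\ref{Poincare-epsilon}) gives uniform H\"older continuity of $v_h$, i.e.\ a uniform $C^{1,\alpha}_{\e,X}$ estimate on $u_\e$. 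Only then is one entitled to invoke Theorem~\ref{main-schauder}. Without this step the Schauder iteration cannot start.

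There is also a methodological difference in how the gradient bound itself is obtained. The paper does not run a Bernstein argument on $W_\e$ with cut-offs; instead it exploits a Lie-group-specific device: the right-invariant frame $X_i^r$ commutes with the left-invariant $X_j^\e$, so $v_0=\partial_t u_\e$ and $v_i=X_i^r u_\e$ satisfy the \emph{linear} equation \eqref{diff-eq}, to which the weak parabolic maximum principle applies directly and reduces the interior gradient bound to the boundary one. This is both cleaner and what makes the subsequent Harnack/H\"older step available, since it is precisely these $v_h$ to which the Harnack inequality is applied. Your proposed maximum-principle argument on $W_\e$ would, in this geometry, require additional structural computations that are not supplied.
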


\begin{cor} 
Under the assumptions of  Theorem \ref{global in time  existence results},
as $\e\to 0$ 
the solutions $u_\e$ of either flow converge uniformly (with all theirs derivatives) on compact subsets of $Q$ to the unique,  smooth solution 
 of the corresponding sub-Riemannian  flow  in $\Om\times (0,\infty)$ with initial data $\varphi$.
\end{cor}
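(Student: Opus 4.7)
The plan is to combine the stable smoothness estimates of Theorem \ref{global in time  existence results} with a diagonal Arzelà--Ascoli extraction and a uniqueness argument at the sub-Riemannian level.

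\emph{Compactness.} Fix any compact $K\subset\subset Q=\Om\times(0,\infty)$ and any $k\in \N$. The bound \eqref{stable estimates} yields $\|u_\e\|_{C^k(K)}\le C_k$ with $C_k$ independent of $\e$. Hence the family $\{u_\e\}$ is precompact in $C^k(K)$, and a diagonal extraction along an exhaustion $K_1\Subset K_2\Subset\cdots$ of $Q$ and over $k\in\N$ produces a sequence $\e_j\to 0$ such that $u_{\e_j}\to u_0$, together with all partial derivatives of all orders, uniformly on compacts of $Q$, with $u_0\in C^\infty(Q)$.

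\emph{Identification of the limit.} Since $X_i^\e\to X_i$ in $C^\infty_{loc}$ for $i=1,\ldots,m$, while the extra fields $X_i^\e$ for $i>m$ carry a factor $\e^{d(i)-1}$ that vanishes in the limit, and since $|\nabla_\e u_\e|$ is uniformly bounded on compacts (so $W_\e$ is bounded and bounded away from $0$), one can pass to the limit pointwise in \eqref{pdee1} and \eqref{pdee2}. The limit $u_0$ then satisfies \eqref{pde01}, respectively \eqref{pde02}, in $\Om\times(0,\infty)$. The uniform bound $u_\e\in L^\infty((0,\infty), C^1(\bar\Om))$, together with equicontinuity in $t$ deduced from the PDE itself (whose right-hand side is uniformly bounded near $\p_p Q$), forces $u_0=\varphi$ on $\p_p Q$ by continuity.

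\emph{Uniqueness and full convergence.} Suppose $u_0,\tilde u_0$ are two smooth solutions of the sub-Riemannian flow sharing data $\varphi$. The difference $v=u_0-\tilde u_0$ satisfies, after integrating along the segment joining the two gradients, a linear equation of the form $\p_t v - \sum_{i,j=1}^m X_i^{\ast}\!\big(a_{ij}(x,t)X_j v\big)=0$ whose coefficients are smooth and satisfy the sub-elliptic coercivity \eqref{unifellip-l} on each compact (because $W_0$ is bounded above and below). The weak Harnack/comparison result of Proposition \ref{weak-harnack} forces $v\equiv 0$, hence the limit is unique. Because every subsequence of $\{u_\e\}$ thus has a further subsequence converging smoothly to the same $u_0$, the full family converges as $\e\to 0$.

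\emph{Main obstacle.} The delicate step is the uniqueness of smooth sub-Riemannian solutions with prescribed data: the limiting PDE is genuinely degenerate, so classical uniformly-parabolic comparison does not apply directly. The argument relies essentially on the stability framework built earlier in the paper, namely the uniform doubling and Poincaré inequalities of Theorems \ref{Main-1}--\ref{Poincare-epsilon}, the uniform Gaussian kernel bounds of Proposition \ref{uniform heat kernel estimates}, and the weak Harnack inequality of Proposition \ref{weak-harnack} for the associated linear operator; these are precisely what let one transplant parabolic comparison from the Riemannian approximants to the sub-Riemannian limit.
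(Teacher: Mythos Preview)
Your argument is essentially the standard one the paper has in mind; the Corollary is stated there without a separate proof, as an immediate consequence of the uniform $C^k$ estimates \eqref{stable estimates}, and your compactness/identification/uniqueness scheme is exactly how one unpacks that.

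One point deserves tightening. In the uniqueness step you linearize the difference $v=u_0-\tilde u_0$ and then invoke Proposition~\ref{weak-harnack}. That proposition is a weak Harnack inequality for \emph{nonnegative supersolutions}: it bounds the average over $Q^-$ by the infimum over $Q^+$. By itself this does not force $v\equiv 0$; you would first need to know $v\ge 0$ (and $-v\ge 0$) in the interior, which is precisely the weak maximum/comparison principle you are trying to establish. The cleaner route is either (i) the comparison principle the paper already cites in step~(2) of the proof sketch (\cite[Theorem~3.3]{CC}), applied directly to the quasilinear equation, or (ii) an energy estimate: test the linear divergence-form equation for $v$ against $v$ itself, use the uniform coercivity \eqref{unifellip-l} (valid since $W_0$ is bounded above and below on compacts), and conclude via Gronwall that $\|v(\cdot,t)\|_{L^2}=0$. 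Either of these replaces your appeal to Proposition~\ref{weak-harnack} and closes the argument.
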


The proof of this result rests crucially on the estimates established in this paper. In the following we list the main steps. First of all we note that in view of the short time existence result in the Riemannian setting we can assume that locally $u_\e$ are smooth both in time and space.

\begin{enumerate}
\item {\bf Interior gradient bounds.}  Denote by  right $X_i^r$ the left invariant frame corresponding to $X_i's$ and observe that these two frames commute. For both flows, consider solutions $u_\e\in C^3(Q)$ and denote
 $v_0=\partial_t u_\e$, $v_i = X_i^{r}u_\e$ for $i=i, \ldots, n$. Then
 for every $h=0,\ldots ,n$ one has that $v_h$ is a solution of
\begin{equation}\label{diff-eq}\p_t v_h= X_i^\e ( a_{ij} X_jv_h )= a_{ij}^\e(\nabla_\e u_\e) X_i^\e X_j^\e v_h + \p_{\xi_k} a_{ij}^\e(\nabla_\e u)X_i^\e X_j^\e u_\e X_k^\e v_h,\end{equation}
where
$$ a_{ij}^\e(\xi) = \frac{1}{\sqrt{1+|\xi|^2}}\Big(\delta_{ij}- \frac{\xi_i \xi_j}{1+|\xi|^2}\Big),$$ for the total variation flow, while
$$ a_{ij}^\e(\xi) =  \delta_{ij}- \frac{\xi_i \xi_j}{1+|\xi|^2},$$
for the mean curvature flow. The weak parabolic maximum principle yields  that there exists $C=C(G,||\varphi||_{C^2( \Om )})>0$
such that for every compact subset $K\subset \subset \Om$ one has
$$\sup_{K \times [0,T)} |\nabla_1 u_\e|  \leq \sup_{\partial_p Q}(|\nabla_1 u_\e| + |\partial _ t u_\e|),$$
where $\nabla_1$ is the  full $g_1-$Riemannian gradient. This yields the desired unform  interior gradient bounds. This argument works in any Lie group, with no restrictions on the step.

\item {\bf Global gradient bounds.} The proof of the boundary gradient estimates is more delicate and depends crucially on the geometry of the space. In particular the argument we outline here only holds in step two groups $G$ and for domains $\Om\subset G$ that are locally Euclidean convex when expressed in the Rothschild-Stein preferred coordinates introduced in \eqref{phi0}.
In \cite{CCM4} we use the Rothschild-Stein osculation Theorem \ref{rs-5} to construct a rather explicit barrier function at any boundary point and then to
 conclude we apply the comparison principle \cite[Theorem 3.3]{CC}. This argument also shows that the solutions $v_h$ to \eqref{diff-eq} are bounded.
\item {\bf Harnack inequalities and $C^{1,\alpha}$ estimates.} We have proved in Theorem \ref{Main-1} and Theorem \ref{Poincare-epsilon}, that $(G,d_\e)$ is a $2-$admissible geometry in the sense of Definition \ref{admissible}, with Doubling and Poincare constants uniform in $\e\ge 0$. As a consequence we can apply the Harnack inequalities in Theorem \ref{main-th} and Proposition \ref{weak-harnack} to the bounded solutions $v_h$ of \eqref{diff-eq}, thus yielding the $C^{1,\alpha}$ uniform interior estimates.
\item {\bf Schauder estimates and higher order estimates}  The uniform Gaussian estimates and  Schauder estimates in Theorem \ref{main-schauder} applied to \eqref{diff-eq} yield the higher order estimates and conclude the proof.

\end{enumerate}

\bibliographystyle{acm}
\bibliography{survey-cc}

\def\cprime{$'$} \def\cprime{$'$} \def\cprime{$'$} \def\cprime{$'$}
\begin{thebibliography}{10}

\bibitem{CCM3}


\bibitem{ACCN}
{\sc Avelin, B., Capogna, L., Citti, G., and Nystr{\"o}m, K.}
\newblock Harnack estimates for degenerate parabolic equations modeled on the
  subelliptic {$p$}-{L}aplacian.
\newblock {\em Adv. Math. 257\/} (2014), 25--65.

\bibitem{baloghrickly}
{\sc Balogh, Z.~M., and Rickly, M.}
\newblock Regularity of convex functions on {H}eisenberg groups.
\newblock {\em Ann. Sc. Norm. Super. Pisa Cl. Sci. (5) 2}, 4 (2003), 847--868.

\bibitem{CCgeometry:Risler}
{\sc Bella$\ddot{\text{i}}$che, A., and Risler}, Eds.
\newblock {\em Sub-Riemannian Geometry}, vol.~144 of {\em Progress in
  Mathematics}.
\newblock Birkh$\ddot{\text{a}}$user, Basel, 1996.

\bibitem{bieske}
{\sc Bieske, T.}
\newblock On $\infty$-harmonic functions on the {H}eisenberg group.
\newblock {\em Comm. Partial Differential Equations 3-4}, 27 (2002), 727--761.

\bibitem{bieske2}
{\sc Bieske, T.}
\newblock Comparison principle for parabolic equations in the {H}eisenberg
  group.
\newblock {\em Electron.\ J. Differential Equations\/} (2005), No.\ 95, 11 pp.\
  (electronic).

\bibitem{birolimosco}
{\sc Biroli, M., and Mosco, U.}
\newblock Sobolev and isoperimetric inequalities for {D}irichlet forms on
  homogeneous spaces.
\newblock {\em Atti Accad.\ Naz.\ Lincei Cl.\ Sci.\ Fis.\ Mat.\ Natur.\ Rend.\
  Lincei (9) Mat.\ Appl. 6}, 1 (1995), 37--44.

\bibitem{BONFI}
{\sc Bonfiglioli, A., Lanconelli, E., and Uguzzoni, F.}
\newblock Uniform {G}aussian estimates for the fundamental solutions for heat
  operators on {C}arnot groups.
\newblock {\em Adv. Differential Equations 7\/} (2002), 1153--1192.

\bibitem{BLU-2004}
{\sc Bonfiglioli, A., Lanconelli, E., and Uguzzoni, F.}
\newblock Fundamental solutions for non-divergence form operators on stratified
  groups.
\newblock {\em Trans. Amer. Math. Soc. 356}, 7 (2004), 2709--2737.

\bibitem{BLU}
{\sc Bonfiglioli, A., Lanconelli, E., and Uguzzoni, F.}
\newblock {\em Stratified {L}ie {G}roups and {P}otential {T}heory for their
  {S}ub-{L}aplacians}.
\newblock Springer, 2007.

\bibitem{BramantiBrandolini}
{\sc Bramanti, M., and Brandolini, L.}
\newblock Schauder estimates for parabolic nondivergence operators of
  {H}\"ormander type.
\newblock {\em J. Differential Equations 234}, 1 (2007), 177--245.

\bibitem{BBLU}
{\sc Bramanti, M., Brandolini, L., Lanconelli, E., and Uguzzoni, F.}
\newblock Non-divergence equations structured on h\"{o}rmander vector fields:
  heat kernels and harnack inequalities.
\newblock {\em Mem. Amer. Math. Soc. 204}, 961 (2010).

\bibitem{Bu}
{\sc Buser, P.}
\newblock Isospectral riemann surfaces.
\newblock {\em Ann. Inst. Fourier (Grenoble) 36\/} (1986), 167--192.

\bibitem{CC}
{\sc Capogna, L., and Citti, G.}
\newblock Generalized mean curvature flow in {C}arnot groups.
\newblock {\em Comm. Partial Differential Equations 34}, 7-9 (2009), 937--956.

\bibitem{ccm1}
{\sc Capogna, L., Citti, G., and Manfredini, M.}
\newblock Regularity of non-characteristic minimal graphs in the {H}eisenberg
  group {$\Bbb H^1$}.
\newblock {\em Indiana Univ. Math. J. 58}, 5 (2009), 2115--2160.

\bibitem{CCM2}
{\sc Capogna, L., Citti, G., and Manfredini, M.}
\newblock Smoothness of lipschitz minimal intrinsic graphs in heisenberg groups
  $\mathbb{H}^n$, $n>1$.
\newblock {\em Crelle's Journal 648\/} (2010), 75Ð110.

\bibitem{MR3108875}
{\sc Capogna, L., Citti, G., and Manfredini, M.}
\newblock Uniform {G}aussian bounds for subelliptic heat kernels and an
  application to the total variation flow of graphs over {C}arnot groups.
\newblock {\em Anal. Geom. Metr. Spaces 1\/} (2013), 255--275.

\bibitem{CCM4}
{\sc Capogna, L., Citti, G., and Manfredini, M.}
\newblock Regularity of mean curvature flow of graphs on lie groups free up to
  step two.
\newblock {\em Preprint\/} (2015).

\bibitem{CCR}
{\sc Capogna, L., Citti, G., and Rea, G.}
\newblock A subelliptic analogue of {A}ronson-{S}errin's {H}arnack inequality.
\newblock {\em Math. Ann. 357}, 3 (2013), 1175--1198.

\bibitem{cdpt:survey}
{\sc Capogna, L., Danielli, D., Pauls, S., and Tyson, J.}
\newblock {\em An introduction to the {H}eisenberg group and the
  sub-{R}iemannian isoperimetric problem}, vol.~259 of {\em Progress in
  Mathematics}.
\newblock Birkh\"auser Verlag, Basel, 2007.

\bibitem{ch:minimal}
{\sc Cheng, J.-H., and Hwang, J.-F.}
\newblock Properly embedded and immersed minimal surfaces in the {H}eisenberg
  group.
\newblock {\em Bull.\ Austral.\ Math.\ Soc. 70}, 3 (2004), 507--520.

\bibitem{MR3216825}
{\sc Cheng, J.-H., and Hwang, J.-F.}
\newblock Uniqueness of generalized {$p$}-area minimizers and integrability of
  a horizontal normal in the {H}eisenberg group.
\newblock {\em Calc. Var. Partial Differential Equations 50}, 3-4 (2014),
  579--597.

\bibitem{chmy:minimal}
{\sc Cheng, J.-H., Hwang, J.-F., Malchiodi, A., and Yang, P.}
\newblock Minimal surfaces in pseudohermitian geometry.
\newblock {\em Ann.\ Sc.\ Norm.\ Super.\ Pisa Cl.\ Sci.\ (5) 4}, 1 (2005),
  129--177.

\bibitem{chmy-2}
{\sc Cheng, J.-H., Hwang, J.-F., Malchiodi, A., and Yang, P.}
\newblock A {C}odazzi-like equation and the singular set for {$C^1$} smooth
  surfaces in the {H}eisenberg group, 2012.

\bibitem{chy}
{\sc Cheng, J.-H., Hwang, J.-F., and Yang, P.}
\newblock Existence and uniqueness for {$p$}-area minimizers in the
  {H}eisenberg group.
\newblock {\em Math. Ann. 337}, 2 (2007), 253--293.

\bibitem{MR2481053}
{\sc Cheng, J.-H., Hwang, J.-F., and Yang, P.}
\newblock Regularity of {$C^1$} smooth surfaces with prescribed {$p$}-mean
  curvature in the {H}eisenberg group.
\newblock {\em Math. Ann. 344}, 1 (2009), 1--35.

\bibitem{CMY}
{\sc Cheng, J.-H., Malchiodi, A., and Yang, P.}
\newblock Isoperimetric domains in homogeneous three-manifolds and the
  isoperimetric constant of the heisenberg group $h^1$.
\newblock preprint, 2015.

\bibitem{CiMa-F}
{\sc Citti, G., and Manfredini, M.}
\newblock Uniform estimates of the fundamental solution for a family of
  hypoelliptic operators.
\newblock {\em Potential Anal. 25}, 2 (2006), 147--164.

\bibitem{cw:1971}
{\sc Coifman, R.~R., and Weiss, G.}
\newblock {\em Analyse harmonique non-commutative sur certains espaces
  homogenes. (French) \`Etude de certaines int\'egrales singuli\`eres}.
\newblock Lecture Notes in Mathematics, Vol. 242. Springer-Verlag, 1971.

\bibitem{CorwinGreenleaf}
{\sc Corwin, L., and Greenleaf, F.~P.}
\newblock {\em Representations of nilpotent {L}ie groups and their
  applications, {P}art {I}: {B}asic theory and examples}.
\newblock Cambridge Studies in Advanced Mathematics. Cambridge University
  Press, Cambridge, 1990.

\bibitem{dgn:minimal}
{\sc Danielli, D., Garofalo, N., and Nhieu, D.-M.}
\newblock Sub-{R}iemannian calculus on hypersurfaces in {C}arnot groups.
\newblock {\em Adv. Math. 215}, 1 (2007), 292--378.

\bibitem{MR2472175}
{\sc Danielli, D., Garofalo, N., Nhieu, D.~M., and Pauls, S.~D.}
\newblock Instability of graphical strips and a positive answer to the
  {B}ernstein problem in the {H}eisenberg group {$\Bbb H^1$}.
\newblock {\em J. Differential Geom. 81}, 2 (2009), 251--295.

\bibitem{MR2648078}
{\sc Danielli, D., Garofalo, N., Nhieu, D.-M., and Pauls, S.~D.}
\newblock The {B}ernstein problem for embedded surfaces in the {H}eisenberg
  group {$\Bbb H^1$}.
\newblock {\em Indiana Univ. Math. J. 59}, 2 (2010), 563--594.

\bibitem{de:1981}
{\sc Debiard, A.}
\newblock Espaces $h^p$ au dessus de l'espace hermitien hyperbolique de $c^n$
  ($n>1$).
\newblock {\em J. Funct. Anal.}, 40 (1981), 185--265.

\bibitem{PolidoroDiFrancesco}
{\sc Di~Francesco, M., and Polidoro, S.}
\newblock Schauder estimates, {H}arnack inequality and {G}aussian lower bound
  for {K}olmogorov-type operators in non-divergence form.
\newblock {\em Adv. Differential Equations 11}, 11 (2006), 1261--1320.

\bibitem{DDR}
{\sc Dirr, N., Dragoni, F., and Von~Renesse, M.}
\newblock Evolution by mean curvature flow in sub-riemannian geometries.
\newblock {\em Communications on Pure and Applied Mathematics\/} (2010),
  307--326.

\bibitem{MFL}
{\sc Ferrari, F., Manfredi, J., and Liu, Q.}
\newblock On the horizontal mean curvature flow for axisymmetric surfaces in
  the heisenberg group.
\newblock {\em Commun. Contemp. Math.}, 3 (2014).

\bibitem{fol:1975}
{\sc Folland, G.~B.}
\newblock Subelliptic estimates and function spaces on nilpotent {L}ie groups.
\newblock {\em Ark. Mat. 2}, 13 (1975), 161--207.

\bibitem{MR753153}
{\sc Franchi, B., and Lanconelli, E.}
\newblock H\"older regularity theorem for a class of linear nonuniformly
  elliptic operators with measurable coefficients.
\newblock {\em Ann. Scuola Norm. Sup. Pisa Cl. Sci. (4) 10}, 4 (1983),
  523--541.

\bibitem{FSS1}
{\sc Franchi, B., Serapioni, R., and Serra~Cassano, F.}
\newblock Meyers-{S}errin type theorems and relaxation of variational integrals
  depending on vector fields.
\newblock {\em Houston J. Math. 22}, 4 (1996), 859--890.

\bibitem{frss:embedding}
{\sc Franchi, B., Serapioni, R., and Serra-Cassano, F.}
\newblock Approximation and imbedding theorems for weighted sobolev spaces
  associated with lipschitz continuous vector fields.
\newblock {\em Boll. Un. Mat. Ital. B 7}, 11 (1997), 83--117.

\bibitem{Friedman}
{\sc Friedman, A.}
\newblock {\em Partial differential equations of parabolic type}.
\newblock Prentice-Hall, Inc., Englewood Cliffs, N.J., 1964.

\bibitem{Friedrichs}
{\sc Friedrichs, K.~O.}
\newblock The identity of weak and strong extensions of differential operators.
\newblock {\em Trans. Amer. Math. Soc.\/} (1944), 132--151.

\bibitem{MR2979606}
{\sc Galli, M., and Ritor{\'e}, M.}
\newblock Existence of isoperimetric regions in contact sub-{R}iemannian
  manifolds.
\newblock {\em J. Math. Anal. Appl. 397}, 2 (2013), 697--714.

\bibitem{GarofaloNhieu:lip1998}
{\sc Garofalo, N., and Nhieu, D.~M.}
\newblock Lipschitz continuity, global smooth approximations and exten- sion
  theorems for sobolev functions in carnot-carathodory spaces.
\newblock {\em J. Anal. Math. 74\/} (1998), 67--97.

\bibitem{Ge}
{\sc Ge, Z.}
\newblock Collapsing {R}iemannian metrics to {C}arnot-{C}arath\'eodory metrics
  and {L}aplacians to sub-{L}aplacians.
\newblock {\em Canad. J. Math. 45}, 3 (1993), 537--553.

\bibitem{grig}
{\sc Grigor'yan, A.~A.}
\newblock The heat equation on non-compact riemannian manifolds.
\newblock {\em Mat. Sb. (1) 182\/} (1991).

\bibitem{GutierrezLanconelli}
{\sc Guti{\'e}rrez, C.~E., and Lanconelli, E.}
\newblock Maximum principle, nonhomogeneous {H}arnack inequality, and
  {L}iouville theorems for {$X$}-elliptic operators.
\newblock {\em Comm. Partial Differential Equations 28}, 11-12 (2003),
  1833--1862.

\bibitem{hak:sobolev2}
{\sc Haj{\l}asz, P., and Koskela, P.}
\newblock Sobolev met {P}oincar\'e.
\newblock {\em Memoirs Amer.\ Math.\ Soc. 145}, 688 (2000).

\bibitem{pau:cmc-carnot}
{\sc Hladky, R.~K., and Pauls, S.~D.}
\newblock Constant mean curvature surfaces in sub-{R}iemannian geometry.
\newblock {\em J. Differential Geom. 79}, 1 (2008), 111--139.

\bibitem{MR2579306}
{\sc Hladky, R.~K., and Pauls, S.~D.}
\newblock Minimal surfaces in the roto-translation group with applications to a
  neuro-biological image completion model.
\newblock {\em J. Math. Imaging Vision 36}, 1 (2010), 1--27.

\bibitem{MR2609016}
{\sc Hurtado, A., Ritor{\'e}, M., and Rosales, C.}
\newblock The classification of complete stable area-stationary surfaces in the
  {H}eisenberg group {$\Bbb H^1$}.
\newblock {\em Adv. Math. 224}, 2 (2010), 561--600.

\bibitem{jer:poincare}
{\sc Jerison, D.}
\newblock The {P}oincar{\'e}\ inequality for vector fields satisfying
  {H}{\"o}rmander's condition.
\newblock {\em Duke Math.\ J. 53\/} (1986), 503--523.

\bibitem{MR865430}
{\sc Jerison, D.~S., and S{\'a}nchez-Calle, A.}
\newblock Estimates for the heat kernel for a sum of squares of vector fields.
\newblock {\em Indiana Univ. Math. J. 35}, 4 (1986), 835--854.

\bibitem{Kilpelainen}
{\sc Kilpel{\"a}inen, T.}
\newblock Smooth approximation in weighted {S}obolev spaces.
\newblock {\em Comment. Math. Univ. Carolin. 38}, 1 (1997), 29--35.

\bibitem{kor:1983}
{\sc Kor\'anyi, A.}
\newblock {\em Geometric aspects of analysis on the Heisenberg group}.
\newblock Ist. Naz. Alta Mat. Francesco Severi, 1983, pp.~209--258.

\bibitem{kor:heisenberg}
{\sc Kor{\'a}nyi, A.}
\newblock Geometric properties of {H}eisenberg-type groups.
\newblock {\em Adv.\ in Math. 56\/} (1985), 28--38.

\bibitem{Krylov}
{\sc Krylov, N.~V.}
\newblock H\"older continuity and {$L_p$} estimates for elliptic equations
  under general {H}\"ormander's condition.
\newblock {\em Topol. Methods Nonlinear Anal. 9}, 2 (1997), 249--258.

\bibitem{Kusuoka}
{\sc Kusuoka, S., and Stroock, D.}
\newblock Long time estimates for the heat kernel associated with a uniformly
  subelliptic symmetric second order operator.
\newblock {\em Ann. of Math 127}, 2 (1988), 165--189.

\bibitem{LanMor}
{\sc Lanconelli, E., and Morbidelli, D.}
\newblock On the poincare inequality for vector fields.
\newblock {\em Ark. Mat. 38\/} (2000), 327--342.

\bibitem{lms}
{\sc Lu, G., Manfredi, J.~J., and Stroffolini, B.}
\newblock Convex functions on the {H}eisenberg group.
\newblock {\em Calc. Var. Partial Differential Equations 19}, 1 (2004), 1--22.

\bibitem{Lunardi}
{\sc Lunardi, A.}
\newblock {\em Analytic semigroups and optimal regularity in parabolic
  problems}.
\newblock Modern Birkh\"auser Classics. Birkh\"auser/Springer Basel AG, Basel,
  1995.
\newblock [2013 reprint of the 1995 original] [MR1329547].

\bibitem{magnani:convex}
{\sc Magnani, V.}
\newblock Lipschitz continuity, {A}leksandrov theorem, and characterizations
  for {H}-convex functions.
\newblock {\em Math.\ Ann. 334\/} (2006), 199--233.

\bibitem{MR2512155}
{\sc Manfredini, M.}
\newblock Uniform {S}chauder estimates for regularized hypoelliptic equations.
\newblock {\em Ann. Mat. Pura Appl. (4) 188}, 3 (2009), 417--428.

\bibitem{mitchell}
{\sc Mitchell, J.}
\newblock On {C}arnot-{C}arath\'eodory metrics.
\newblock {\em J. Differential Geom. 21}, 1 (1985), 35--45.

\bibitem{Montgomery}
{\sc Montgomery, R.}
\newblock {\em Survey of singular geodesics}.
\newblock Vol.~144 of Bella$\ddot{\text{i}}$che and Risler
  \cite{CCgeometry:Risler}, 1996, pp.~325--339.

\bibitem{Montgomery:book}
{\sc Montgomery, R.}
\newblock {\em A tour of sub-{R}iemannian geometries, their geodesics and
  applications.}
\newblock No.~91 in Mathematical Surveys and Monographs. American Mathematical
  Society, 2002.

\bibitem{monti-tesi}
{\sc Monti, R.}
\newblock Distances, boundaries and surface measures in carnot-caratheodory
  spaces.
\newblock {\em Ph.D Thesis, Universit\'a degli studi di Trento\/} (2001).

\bibitem{Monti-Rickly05}
{\sc Monti, R., and Rickly, M.}
\newblock Geodetically convex sets in the {H}eisenberg group.
\newblock {\em J. Convex Anal. 12}, 1 (2005), 187--196.

\bibitem{NSW}
{\sc Nagel, A., Stein, E.~M., and Wainger, S.}
\newblock Balls and metrics defined by vector fields. {I}. {B}asic properties.
\newblock {\em Acta Math. 155}, 1-2 (1985), 103--147.

\bibitem{Pauls:minimal}
{\sc Pauls, S.~D.}
\newblock Minimal surfaces in the {H}eisenberg group.
\newblock {\em Geom.\ Dedicata 104\/} (2004), 201--231.

\bibitem{pau:obstructions}
{\sc Pauls, S.~D.}
\newblock {$H$}-minimal graphs of low regularity in {$\Bbb H\sp 1$}.
\newblock {\em Comment. Math. Helv. 81}, 2 (2006), 337--381.

\bibitem{MR2898770}
{\sc Ritor{\'e}, M.}
\newblock A proof by calibration of an isoperimetric inequality in the
  {H}eisenberg group {${\Bbb H}^n$}.
\newblock {\em Calc. Var. Partial Differential Equations 44}, 1-2 (2012),
  47--60.

\bibitem{RR}
{\sc Ritor{\'e}, M., and Rosales, C.}
\newblock Rotationally invariant hypersurfaces with constant mean curvature in
  the {H}eisenberg group {$\Bbb H\sp n$}.
\newblock {\em J. Geom. Anal. 16}, 4 (2006), 703--720.

\bibitem{RR1}
{\sc Ritor{\'e}, M., and Rosales, C.}
\newblock Area-stationary surfaces in the {H}eisenberg group {$\Bbb H^1$}.
\newblock {\em Adv. Math. 219}, 2 (2008), 633--671.

\bibitem{Roth:Stein}
{\sc Rothschild, L.~P., and Stein, E.~M.}
\newblock Hypoelliptic differential operators and nilpotent groups.
\newblock {\em Acta Math. 137}, 3-4 (1976), 247--320.

\bibitem{MR1771424}
{\sc Rumin, M.}
\newblock Sub-{R}iemannian limit of the differential form spectrum of contact
  manifolds.
\newblock {\em Geom. Funct. Anal. 10}, 2 (2000), 407--452.

\bibitem{SC}
{\sc Saloff-Coste, L.}
\newblock A note on {P}oincar\'e, {S}obolev, and {H}arnack inequalities.
\newblock {\em Internat. Math. Res. Notices 1992}, 2 (1992), 27--38.

\bibitem{Serra}
{\sc Serra~Cassano, F.}
\newblock On the local boundedness of certain solutions for a class of
  degenerate elliptic equations.
\newblock {\em Boll. Un. Mat. Ital. B (7) 10}, 3 (1996), 651--680.

\bibitem{MR1387522}
{\sc Sturm, K.~T.}
\newblock Analysis on local {D}irichlet spaces. {III}. {T}he parabolic
  {H}arnack inequality.
\newblock {\em J. Math. Pures Appl. (9) 75}, 3 (1996), 273--297.

\bibitem{wang:aronsson}
{\sc Wang, C.-Y.}
\newblock The aronsson equation for absolute minimizers of l-infinity
  functionals associated with vector fields satisfying hormanders condition.
\newblock {\em Transactions of American Mathematical Society\/}.
\newblock to appear.

\bibitem{wang:convex}
{\sc Wang, C.~Y.}
\newblock Viscosity convex functions on {C}arnot groups.
\newblock {\em Proc.\ Amer.\ Math.\ Soc. 133}, 4 (2005), 1247--1253
  (electronic).

\bibitem{Xu}
{\sc Xu, C.~J.}
\newblock Regularity for quasilinear second-order subelliptic equations.
\newblock {\em Comm. Pure Appl. Math. 45}, 1 (1992), 77--96.

\end{thebibliography}
\end{document}